 \numberwithin{equation}{section}
\begin{document}


\title{A new parameter free partially penalized immersed finite element and the optimal convergence analysis\thanks{
H. Ji is partially supported by the National Natural Science Foundation of China (Grants Nos. 11701291, 12101327 and 11801281) and the Natural Science Foundation of Jiangsu Province (Grant No. BK20200848); F. Wang is partially supported by the National Natural Science Foundation of China (Grant Nos. 12071227 and 11871281) and the Natural Science Foundation of the Jiangsu Higher Education Institutions of China (Grant No. 20KJA110001);  J. Chen is partially supported by the National Natural Science Foundation of China (Grant Nos. 11871281, 11731007 and 12071227); Z. Li is partially supported by  partially supported by a Simon grant (No. 633724).}
}

\titlerunning{Analysis of immersed finite element method}        

\author{Haifeng Ji         \and
        Feng Wang  \and
        Jinru Chen \and
        Zhilin Li
}


\institute{Haifeng Ji \at
              School of Science, Nanjing University of Posts and Telecommunications, Nanjing 210023, China\\
              \email{hfji@njupt.edu.cn}           
           \and
           Feng Wang \at
             Jiangsu Key Laboratory for NSLSCS, School of Mathematical Sciences, Nanjing Normal University, Nanjing 210023, China\\
              \email{fwang@njnu.edu.cn}
             \and
             Jinru Chen \at
             Jiangsu Key Laboratory for NSLSCS, School of Mathematical Sciences, Nanjing Normal University, Nanjing 210023, China \\
     School of Mathematics and Information Technology, Jiangsu Second Normal University, Nanjing 211200, China\\
              \email{jrchen@njnu.edu.cn}
             \and
             Zhilin Li \at
             Department of Mathematics, North Carolina State University, Raleigh, NC 27695, USA\\
              \email{zhilin@math.ncsu.edu}
              }

\date{Received: date / Accepted: date}

\maketitle
\begin{abstract}
This paper presents a new  parameter free partially penalized immersed finite element method and convergence analysis for solving second order elliptic interface problems. A lifting operator is introduced on interface edges to ensure the coercivity of the method without requiring an ad-hoc stabilization parameter.
The optimal approximation capabilities of the immersed finite element space is proved via a novel new approach that is much simpler than that in the literature.   A new trace inequality which is necessary to prove the optimal convergence of   immersed finite element methods is established on interface elements. Optimal error estimates are derived rigorously with the constant independent of the interface location relative to the mesh.
The new method and  analysis have also been extended to variable coefficients and three-dimensional problems. Numerical examples are also provided to confirm the theoretical analysis and efficiency of the new method.

\keywords{interface problem \and partially penalized immersed finite element \and unfitted mesh \and trace inequality \and optimal error estimates of IFEM}
 \subclass{65N15 \and 65N30 \and 35R05}
\end{abstract}

\section{Introduction}

In this paper we consider immersed finite element (IFE) methods for solving the following second-order elliptic interface problem
\begin{align}
-\nabla\cdot(\beta(x)\nabla u(x))&=f(x)  \qquad\mbox{in } \Omega\backslash\Gamma,\label{p1.1}\\
[u]_{\Gamma}(x)&=0~~~~ \qquad\mbox{on } \Gamma,\label{p1.2}\\
[\beta\nabla u\cdot \textbf{n}]_{\Gamma}(x)&=0~~~~ \qquad\mbox{on } \Gamma,\label{p1.3}\\
u(x)&=0 ~~~~\qquad\mbox{on } \partial\Omega,\label{p1.4}
\end{align}
where $f\in L^2(\Omega)$, $\Omega\subset\mathbb{R}^d$, $d=2,3$ is a convex polygonal/polyhedral domain and $\Gamma$ is a compact curve/surface without boundary embedded in $\Omega$.  The interface $\Gamma$ divides $\Omega$ into two disjoint sub-domains $\Omega^+$ and $\Omega^-$. Without loss of generality, we assume that $\Omega^-$ lies strictly inside $\Omega$, see Figure~\ref{interfacepb} for an illustration.
The jump conditions on the interface $\Gamma$ are defined as
\begin{align}
[u]_{\Gamma}(x)&:=u^+(x)-u^-(x),\label{p1.6}\\
[\beta\nabla u\cdot\textbf{n}]_\Gamma(x)&:= \beta^+(x)\nabla u^+(x)\cdot\textbf{n}(x)-\beta^-(x)\nabla u^-(x)\cdot\textbf{n}(x),\label{p1.7}
\end{align}
where $u^\pm=u|_{\Omega^\pm}$ and $\textbf{n}(x)$ is the unit normal vector of the interface $\Gamma$ at $x\in\Gamma$ pointing toward $\Omega^+$.

We first consider the problem in two dimensions $(d=2)$ with  a piecewise constant coefficient, i.e.,
\begin{align}\label{p1.5}
\beta(x)=\beta^+>0 \mbox{ if } x\in\Omega^+ \mbox{ and } \beta(x)=\beta^->0 \mbox{ if } x\in\Omega^-.
\end{align}
The extensions to variable coefficients and 3D problems are presented in Section~\ref{sec_var} and Section~\ref{sec_3D}, respectively.

\begin{figure} [htbp]
\centering
\includegraphics[height=4cm,clip]{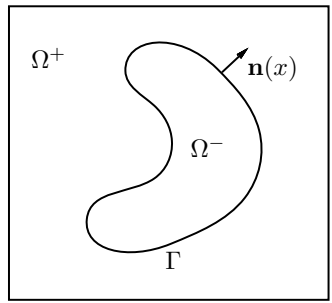}
 \centering \caption{A  diagram of the geometries of an interface problem.}\label{interfacepb} 
\end{figure}

 It is well known that the optimal convergence can be achieved by standard finite element methods if interface-fitted meshes are used, see for example \cite{bramble1996finite,chen1998finite,0xu}.
However, for a moving interface, it may be time consuming to obtain an interface-fitted mesh at different time levels.
IFE methods are designed to solve interface problems using unfitted meshes that are not necessarily aligned with
interfaces. An unfitted mesh is generated independent of the interface and allows the interface cut elements.
IFE methods are often coupled with structured meshes and can utilize
fast Poisson solvers  and other efficient software packages. Peskin's immersed boundary method \cite{Peskin1977Numerical} is one of successful examples using  unfitted meshes. We refer the readers to \cite{li2006immersed} for a brief review of various unfitted mesh methods for interface problems.


Traditional finite element methods using unfitted meshes only achieve sub-optimal convergence (i.e., $O(h^{1/2})$ in the $H^1$ norm and $O(h)$ in the $L^2$ norm) no matter how high degree of the polynomial is used, see  \cite{babuvska1970finite,2014A}.
The sub-optimal convergence is due to the interface condition~(\ref{p1.3}) that leads to discontinuous normal derivative across the interface in general  if $\beta^+\not=\beta^-$.
Thus, the regularity of the  solution is low on interface elements  which the interface cuts and smooth polynomials cannot approximate the  solution well enough on these elements. For finite element methods, roughly speaking, there are two approaches to recover the optimal convergence. The first one is to enrich the standard finite element space by augmenting extra degrees of freedom on interface elements and if necessary add some integral terms into the variational form to weakly enforce interface conditions, for example, the extended finite element method \cite{fries2010extended}, the unfitted Nitsche's method \cite{hansbo2002unfitted,2012An,2016High,2018Robust} and the enriched finite element method \cite{Wang2018A}. The other approach is to modify the traditional finite element space on interface elements according to interface conditions to achieve the optimal approximation, while keeping the degrees of freedom and the structure unchanged, for example, the multiscale finite method \cite{multi_CHU2010} and the IFE method \cite{li1998immersed,Li2003new,li2004immersed,he2012convergence,He2008,Guo2018CMA,Guo2019Improved} that we utilize in this paper.

The key idea of the original IFE in \cite{Li2003new} is to use a piecewise linear function as the basis function over an interface element  so that the continuity condition can be satisfied both for the function and the flux along the line segment approximating the interface.  It has been shown that  the modified finite element space (called the IFE space) exists  and has the optimal approximation capability. However,  one trade-off is that  the finite element basis  may be discontinuous across interface edges on interface elements.
In \cite{hou2004removing,hou2005numerical}, a Petrov-Galerkin finite element method  is proposed, in which the IFE space is used as the trial function space while the standard conforming linear finite element space is chosen as the test function space. However, the coefficient matrix of the resulting linear system of equations  is non-symmetric and the convergence proof is still an open challenge except for the one dimensional case \cite{jiInf18}.
Another approach is to add the contributions from the line integrals due to the discontinuities in the basis functions in deriving the weak form as in the  symmetric and consistent IFE method in \cite{ji2014sym}.  However, the coercivity was only verified by numerical examples in \cite{ji2014sym}. The partially penalized immersed finite element (PPIFE) method developed in \cite{taolin2015siam} includes some terms only on interface edges to penalize the discontinuity, which can guarantee the optimal convergence if the penalty parameter is larger enough and the solution is in the piecewise $H^3$ space.
The error estimate in the $L^2$ norm cannot be obtained by the standard duality argument due to the requirement of the higher regularity.
The PPIFE method is then extended to the second-order elliptic interface problem with  non-homogeneous jump conditions in \cite{jiESAJAM2018} under higher regularity assumptions, and other types of interface problems in \cite{Linpar2015,huang2021}.

 In this paper, we develop and analyze a parameter free PPIFE method based on a special designed lifting operator defined locally on interface elements. The new method avoids the limitation of the PPIFE in \cite{taolin2015siam} in choosing the stabilization parameter that may depends on $\beta^\pm$. 
 We show that the lifting operator can be expressed explicitly, and thus, is  easy to be computed.
 The idea of using liftings comes from discontinuous Galerkin methods. We refer the readers to the book \cite{di2011mathematical}, particularly Chapter 4.3, for the definition of liftings for discontinuous Galerkin methods.
However, different from the discontinuous Galerkin methods and the  original PPIFE method \cite{taolin2015siam}, we show that the penalty term involving the jump of IFE functions on interface edges does not need to be included since the IFE  functions are continuous at nodal points.
 %
%
We prove optimal error estimates of the new method in the $H^1$ and $L^2$ norms rigorously with usual piecewise $H^2$ regularity assumptions.  
In addition, we also show that our method and the analysis can be  extended to variable coefficients and three-dimensional problems.

 There are two major contributions in the theoretical analysis for IFE methods in this paper. First, we present a novel and simple way to prove the optimal interpolation error estimates in the $H^1$ and $L^2$ norms for the linear IFE space originally developed in \cite{Li2003new}. The first proof of this result was presented in \cite{li2004immersed} based on
the multipoint Taylor expansion and the piecewise $C^2$ assumption. Thus, the proof is long and tedious with the stronger than necessary regularity assumption.
 Recently, Guo and Lin \cite{GuoIMA2019}  presented a unified  multipoint Taylor expansion procedure for proving the optimal approximation capability of a group of IFE spaces
where the finite element function is a piecewise polynomial on subelements formed by the interface itself instead of its line approximation.
 For high-contrast interface problems, Guzm\'an et al. \cite{GuzmanJSC2017} proposed a finite element method where the shape functions on interface elements are also defined with subelements formed by the interface.
 %
 Note that the finite element functions in \cite{GuzmanJSC2017} are discontinuous on boundaries of interface elements. Therefore the finite element space has more degrees of freedom than that of IFE methods. Higher order methods are developed and analyzed  in \cite{201GUOSIAM}.  The key of the analysis technique developed in \cite{GuzmanJSC2017}  is to use a patch around the interface element to deal with possible small triangles cut by the interface. 
 Using the patch idea, Guo and Lin \cite{Guojcp2020} proposed a framework to analyze IFEs and proved the optimal approximation capability of an IFE space in three dimensions.
 In this paper, we have developed another analysis technique for the interpolation error for IFE spaces without using patches. The core ingredient of the analysis is to introduce some auxiliary functions on interface elements and then to carry out the analysis (see Lemma~\ref{lem_jiest} in Section~\ref{sec_anal}). The idea using the auxiliary functions is inspired from early  works on the augmented IFE method \cite{Ji2020JCP}.
 %

Note that some IFE methods use the exact interface information on interface triangles.  The downside of this approach is that the IFE shape functions often are discontinuous for curved interfaces. Other IFE methods use the line segments to approximate the interface so that the IFE shape functions are continuous in the interior of interface triangles. The first approach (the exact  interface) is advantageous for high order IFE methods  \cite{adjerid2018higher,adjerid2017high} and three-dimensional problems \cite{Guojcp2020}.
For two-dimensional problems, we use the second approach (line segments) and  present a rigorous 
proof of the fact that  the linear approximation of the interface is enough to ensure the optimal convergence for linear IFE methods. 
 %
 We note that the existing error analysis on the mismatch of the actual interface and the approximated interface for approximation capabilities of IFE spaces is based on  the argument in \cite{chen1998finite}.  Thus,  there will be a factor $|\log h|$ in those interpolation error estimates. In this paper, we use a technique from \cite{James1994A} so that we can actually  remove the $|\log h|$ factor in the optimal interpolation error estimates.

The second major contribution of this paper is  a new trace inequality (see Lemma~\ref{lema_trace} in Section~\ref{subsec_tra}) on interface elements, which is key in   proving the optimal convergence of IFE methods under a standard piecewise $H^2$ regularity assumption. The new developed trace inequality can be applied to improve the error analysis of the PPIFE method developed in \cite{taolin2015siam}. The proof of the new  trace inequality is based on the decomposition of functions along the normal and tangent directions of interfaces, and the fact that the IFE shape function and its flux are continuous across the approximated interface simultaneously.

The rest of the paper is organized as follows. In Section~\ref{sec_2}, we introduce notations,  and the linear IFE space. In Section~\ref{sec_3}, we define the local lifting operator and explain the new parameter free PPIFE method. The main theoretical results of this paper are presented in Section~\ref{sec_analysis}, where we give a new proof of the optimal interpolation error estimates for the linear IFE space; establish a new trace inequality for broken spaces;  and prove the optimal convergence of the new developed parameter free PPIFE method in the $H^1$ and $L^2$ norms under the standard piecewise $H^2$ regularity assumption. We extend the method and  analysis to variable coefficients and three dimensions in Section~\ref{sec_var} and Section~\ref{sec_3D}. Section~\ref{sec_num} presents some numerical examples to confirm the theoretical analysis.  We conclude in the last section.

\section{Notations and the IFE space}\label{sec_2}

Throughout the paper we adopt the standard notation $W^{k}_p(\Lambda)$ for Sobolev spaces on a domain $\Lambda$ with the norm $\|\cdot\|_{W^{k}_p(\Lambda)}$ and the seminorm $|\cdot|_{W^{k}_p(\Lambda)}$. Specially, we denote $W^{k}_2(\Lambda)$ by $H^{k}(\Lambda)$ with the norm $\|\cdot\|_{H^{k}(\Lambda)}$ and the semi-norm $|\cdot|_{H^{k}(\Lambda)}$. As usual $H_0^1(\Lambda)=\{v\in H^1(\Lambda) : v=0 \mbox{ on }\partial \Lambda\}$.
Furthermore, for a domain $\Lambda$, we define
$$\Lambda^+:=\Lambda\cap \Omega^+,\qquad \Lambda^-:=\Lambda\cap \Omega^-,$$
 and  a subspace of $H^1(\Lambda)$ by
\begin{equation}\label{def_H2}
\widetilde{H}^2(\Lambda):=\{v\in L^2(\Lambda) : v|_{\Lambda^\pm}\in H^2(\Lambda^\pm),~ [v]_{\Gamma\cap\Lambda}=0,~[\beta\nabla v\cdot \textbf{n}]_{\Gamma\cap\Lambda}=0\}
\end{equation}
equipped with the norm and semi-norm
$$
\|\cdot\|^2_{H^2(\Lambda^+\cup\Lambda^-)}=\|\cdot\|^2_{H^2(\Lambda^+)}+\|\cdot\|^2_{H^2(\Lambda^-)}, \quad|\cdot|^2_{H^2(\Lambda^+\cup\Lambda^-)}=|\cdot|^2_{H^2(\Lambda^+)}+|\cdot|^2_{H^2(\Lambda^-)}.
$$
%
We have the following regularity theorem for the interface problem, see \cite{huang2002some} and \cite{multi_CHU2010}.
\begin{theorem}\label{theo_reg}
If $\Omega$ is convex and polygonal, the interface $\Gamma$ is $C^2$,  and $f\in L^2(\Omega)$, then the interface problem (\ref{p1.1})-(\ref{p1.7}) has a unique
solution $u\in \widetilde{H}^2(\Omega)$  satisfying 
$$
\|u\|_{{H}^2(\Omega^+\cup\Omega^-)}\leq C\|f\|_{L^2(\Omega)},
$$
where $C$ is a  constant depending only on $\Omega$, $\Gamma$ and $\beta$.
\end{theorem}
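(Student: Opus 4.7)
The plan is to obtain the result in two stages: first establish existence and uniqueness of a weak solution in $H^1_0(\Omega)$, and then upgrade the regularity to piecewise $H^2$ by a localization argument that separately treats the interior of each subdomain, a neighborhood of the interface $\Gamma$, and a neighborhood of the polygonal boundary $\partial\Omega$.

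For existence and uniqueness, I would test against $v\in H^1_0(\Omega)$ and derive the weak formulation $a(u,v):=\int_\Omega \beta\nabla u\cdot\nabla v\,dx = \int_\Omega fv\,dx$. Because $\beta^\pm>0$ are strictly positive piecewise constants, $a(\cdot,\cdot)$ is continuous and coercive on $H^1_0(\Omega)$ by the Poincar\'e inequality, so the Lax--Milgram theorem yields a unique $u\in H^1_0(\Omega)$. The condition $[u]_\Gamma=0$ is automatic from $u\in H^1(\Omega)$, and $[\beta\nabla u\cdot \mathbf{n}]_\Gamma=0$ follows by testing against $v\in C_0^\infty(\Omega)$ and performing integration by parts on each side of $\Gamma$, together with an $H^1$ a priori bound $\|u\|_{H^1(\Omega)}\le C\|f\|_{L^2(\Omega)}$.

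For the $H^2$ regularity, I would use a smooth partition of unity $\{\chi_i\}$ subordinate to a finite cover of $\overline{\Omega}$ consisting of three types of patches. On patches contained strictly inside $\Omega^+$ or $\Omega^-$ the coefficient $\beta$ is constant, the equation reduces to $-\beta^\pm\Delta u = f$, and standard interior elliptic regularity gives $\chi_i u\in H^2$. On patches meeting $\partial\Omega$ but not $\Gamma$, the convex polygonal hypothesis on $\Omega$ allows one to invoke the Kadlec/Grisvard regularity result for the Dirichlet Laplacian on convex polygons to obtain the $H^2$ estimate up to the boundary. On patches straddling $\Gamma$ (and, by the separation of $\Gamma$ from $\partial\Omega$, disjoint from it) I would use the $C^2$ regularity of $\Gamma$ to construct a $C^2$-diffeomorphism flattening $\Gamma$ to a hyperplane; the transformed equation is then an elliptic transmission problem on a ball with a flat interface and piecewise constant coefficients, whose $H^2$ regularity is classical (one may reflect across the flat interface and apply Nirenberg's difference-quotient method in directions tangential to the interface to obtain all tangential second derivatives, then recover the normal second derivative algebraically from the PDE itself using the continuity of the flux and the $L^2$ bound on $f$).

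The single obstacle requiring care is the treatment across $\Gamma$, because the normal derivative jumps when $\beta^+\neq\beta^-$. The tangential Nirenberg-quotient argument works uniformly on either side, but the normal second derivative requires that, once $\partial_{\tau\tau}u$ is controlled in $L^2$ on each side, the remaining second derivative $\partial_{nn}u$ be solved for from the PDE on each side separately; the flux jump condition $[\beta\nabla u\cdot\mathbf{n}]=0$ and the tangential continuity $[\nabla_\tau u]=0$ together supply enough compatibility to keep the constants bounded only in terms of $\Omega$, $\Gamma$, and $\beta^\pm$. Summing the local estimates through the partition of unity and absorbing the lower-order $H^1$ contributions via the a priori bound from Lax--Milgram yields the stated estimate with $C=C(\Omega,\Gamma,\beta)$.
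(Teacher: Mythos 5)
Your two-stage plan (Lax--Milgram for well-posedness, then piecewise $H^2$ regularity by localization) is sound, but note that the paper does not prove this statement at all: Theorem~\ref{theo_reg} is quoted as a known result, with the proof delegated to the cited references \cite{huang2002some} and \cite{multi_CHU2010}. What you have written is essentially a reconstruction of the classical argument underlying those references -- interior elliptic regularity away from $\Gamma$ and $\partial\Omega$, the Kadlec/Grisvard convexity result near the polygonal boundary (legitimate here because $\Omega^-$ lies strictly inside $\Omega$, so the coefficient is the constant $\beta^+$ near $\partial\Omega$), and a flattening of the $C^2$ interface followed by tangential difference quotients and algebraic recovery of the normal--normal second derivative from the PDE using $[\beta\nabla u\cdot\mathbf{n}]_\Gamma=0$. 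Two small imprecisions are worth flagging, though neither is a genuine gap: after the $C^2$ change of variables the coefficients are no longer piecewise constant but piecewise Lipschitz (the metric terms from the Jacobian enter), which the difference-quotient method still handles; and the flux jump condition for the weak solution should first be read in the weak $H^{-1/2}(\Gamma)$ sense (it is available because $\nabla\cdot(\beta\nabla u)=-f\in L^2$), becoming an $L^2(\Gamma)$ trace identity only after the piecewise $H^2$ regularity is established, so the logical order in your first paragraph should be adjusted accordingly. With those caveats your outline matches the standard proof and yields the stated bound with $C=C(\Omega,\Gamma,\beta)$; citing the literature, as the paper does, is the economical alternative.
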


Let $ \{\mathcal{T}_h\}_{h>0}$ be a family of triangulations of $\Omega$ such that no vertex of any element lies in the interior of an edge of another element. We use $h_T$ to denote the diameter of $T\in\mathcal{T}_h$ and define the mesh-size of the triangulation by $h=\max_{T\in\mathcal{T}_h}h_T$. We assume that $\mathcal{T}_h$ is  quasi-uniform, i.e., for every $T$, there exist positive constants $\kappa_1$ and $\kappa_2$ such that  $\kappa_1h\leq h_T\leq \kappa_2\rho_T$ where $\rho_T$ is the diameter of the largest circle inscribed in $T$. Let $\mathcal{E}_h$ be the set of edges and $\mathcal{N}_h$  be the set of vertices of the triangulation. We adopt the convention that elements $T\in\mathcal{T}_h$ and edges $e\in\mathcal{E}_h$ are open sets.  The sets of interface elements and interface edges are defined  as
\begin{equation*}
\mathcal{T}_h^\Gamma :=\{T\in\mathcal{T}_h :  T\cap \Gamma\not = \emptyset\} \quad\mbox{ and }\quad \mathcal{E}_h^\Gamma:=\{e\in \mathcal{E}_h : e \cap \Gamma\not = \emptyset\}.
\end{equation*}
The set of non-interface elements is defined by $\mathcal{T}^{non}_h=\mathcal{T}_h\backslash\mathcal{T}_h^{\Gamma}$.

\textbf{Assumption A.} The interface $\Gamma$ does not intersect the boundary of any interface element at more than two points. The interface $\Gamma$ does not intersect  $\overline{e}$ for any $e\in\mathcal{E}_h$  at more than one point.

We can always refine the mesh near the interface with large curvature until Assumption A is satisfied. As a common practice, we approximate the interface $\Gamma$ by $\Gamma_h$ that is composed of all the line segments connecting the intersection points of boundaries of interface elements $T\in\mathcal{T}^\Gamma_h$ and the interface $\Gamma$. The approximated $\Gamma_h$ divides $\Omega$ into two disjoint sub-domains $\Omega^+_h$ and $\Omega^-_h$. For convenience we approximate the coefficient $\beta(x)$  as
\begin{equation}\label{def_bth}
\beta_h(x)=\beta^+ \mbox{ if } x\in\Omega^+_h \quad\mbox{ and }\quad \beta_h(x)=\beta^- \mbox{ if } x\in\Omega^-_h.
\end{equation}
We will  show that the approximation of the interface by $\Gamma_h$ does not affect second order convergence when the interface $\Gamma$ is in $C^2$. Let $\textbf{n}_h(x)$ be the unit normal vector of $\Gamma_h$ pointing toward $\Omega^+_h$. The unit tangent vector $\textbf{t}_h(x)$ is obtained by a $90^\circ$ clockwise rotation of $\textbf{n}_h(x)$.  We note that $\textbf{n}_h(x)$ and $\textbf{t}_h(x)$ are viewed as  piecewise constant vector-valued functions defined on all interface elements.

%

\begin{figure} [htbp]
\centering
\includegraphics[height=5cm,clip]{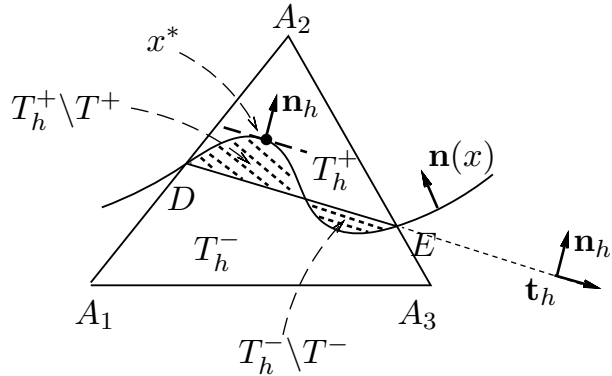}
 \caption{An interface element where the interface is approximated by the line segment.}\label{interface_ele} 
\end{figure}

\textbf{Linear IFE shape function space}. For an interface element $T\in\mathcal{T}_h^\Gamma$, we denote the intersection points of $\Gamma$ and $\partial T$ by $D$ and $E$.
%
The straight line $DE$ divides $T$ into $T_h^+= T\cap \Omega^+_h$ and $T_h^-=T\cap \Omega^-_h$, see Figure~\ref{interface_ele}. The linear IFE shape function on an interface element $T\in\mathcal{T}_h^\Gamma$ is defined as
\begin{equation}\label{ife_modi0}
\phi(x):=\left\{
\begin{aligned}
\phi^+=a^++b^+x_1+c^+x_2,\quad x=(x_1,x_2)\in T_h^+,\\
\phi^-=a^-+b^-x_1+c^-x_2,\quad x=(x_1,x_2)\in T_h^-,
\end{aligned}
\right.
\end{equation}
in which the coefficients are chosen so that  the following  conditions are satisfied
\begin{equation}\label{ifem_modi}
\phi^+(D)=\phi^-(D),~~ \phi^+(E)=\phi^-(E),~~\beta^+\nabla\phi^+\cdot \textbf{n}_{h}=\beta^-\nabla\phi^-\cdot \textbf{n}_{h}.
\end{equation}
The linear IFE shape function space  $S_h(T)$ is defined as  the set of functions in (\ref{ife_modi0}).
It is obvious that IFE shape function $\phi\in S_h(T)$ and its flux are continuous across $\Gamma_h\cap T$ simultaneously, i.e.,
\begin{equation}\label{conti_linear}
[\phi]_{\Gamma_{h}\cap T}=0 ~~\mbox{ and }~~  [\beta_h\nabla\phi\cdot \textbf{n}_{h}]_{\Gamma_{h}\cap T}=0\quad \mbox{ on }\Gamma_h\cap T.
\end{equation}
\begin{lemma}\label{lem_unique}
Let $A_i$, $i=1,2,3$ be vertices of an interface element $T\in\mathcal{T}_h^\Gamma$ and $\alpha_{max}$ be the maximum angle of the interface element $T$. If  $\alpha_{max}\leq \pi/2$, then the function $\phi\in S_h(T)$ defined in (\ref{ife_modi0})-(\ref{ifem_modi}) is uniquely determined by $\phi(A_i)$, $i=1,2,3$ regardless of the interface location.
\end{lemma}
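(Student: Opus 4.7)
Since $S_h(T)$ is carved out of the six-dimensional space of piecewise-affine functions on $T$ by the three scalar interface conditions in~(\ref{ifem_modi}), one has $\dim S_h(T)\le 3$, so it suffices to prove that the nodal evaluation $\phi \mapsto (\phi(A_1),\phi(A_2),\phi(A_3))$ is injective on $S_h(T)$. My plan is to assume $\phi\in S_h(T)$ with $\phi(A_i)=0$ for $i=1,2,3$ and deduce $\phi\equiv 0$; by Assumption~A the chord $\overline{DE}=\Gamma_h\cap T$ separates one vertex from the other two, and after relabeling and (if necessary) swapping $\beta^+\leftrightarrow\beta^-$ I may take $A_1\in T_h^+$, $A_2,A_3\in T_h^-$, $D\in\overline{A_1A_2}$, $E\in\overline{A_1A_3}$.

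With this setup, $\phi^-(A_2)=\phi^-(A_3)=0$ forces $\phi^-(x)=\gamma\,L(x)$ for one scalar $\gamma$, where $L(x):=\mathbf{n}_{23}\cdot(x-A_2)$ and $\mathbf{n}_{23}$ is the unit normal to $\overline{A_2A_3}$ pointing toward $A_1$; and $\phi^+(A_1)=0$ gives $\phi^+(x)=\mathbf{g}^+\cdot(x-A_1)$ for some $\mathbf{g}^+\in\mathbb{R}^2$. The three remaining unknowns $(g_1^+,g_2^+,\gamma)$ are constrained by the two continuity identities at $D,E$ and the single flux balance across $\overline{DE}$. Subtracting the two continuity identities shows $\mathbf{g}^+-\gamma\mathbf{n}_{23}$ is orthogonal to $D-E$, hence equals $\mu\mathbf{n}_h$ for some scalar $\mu$; using either continuity identity then yields $\mu=-\gamma\,h_1/d_1$, where $h_1$ denotes the perpendicular distance from $A_1$ to $\overline{A_2A_3}$ and $d_1$ the distance from $A_1$ to $\overline{DE}$. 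Substituting $\mathbf{g}^+=\gamma\mathbf{n}_{23}+\mu\mathbf{n}_h$ into the flux condition then collapses everything to the single scalar identity
$$ \gamma\,\bigl[\,\beta^+\,h_1/d_1\;-\;(\beta^+-\beta^-)\cos\theta\,\bigr]\;=\;0,\qquad \cos\theta:=\mathbf{n}_{23}\cdot\mathbf{n}_h. $$

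The hard part will be to show the bracketed coefficient is strictly nonzero, and this is where the hypothesis $\alpha_{max}\leq\pi/2$ must be exploited. Since $\mathbf{n}_h$ and $\mathbf{n}_{23}$ both point from the half-plane containing $A_2,A_3$ toward $A_1$, one immediately has $\cos\theta\in(0,1]$. Combining the maximum-angle hypothesis with Assumption~A, I then plan to deduce the geometric inequality $h_1/d_1\geq 1$, with strict inequality off the degenerate case $\overline{DE}=\overline{A_2A_3}$, by comparing the perpendiculars dropped from $A_1$ to the two chords. A short case split on the sign of $\beta^+-\beta^-$ (the nontrivial branch using $(1-\beta^-/\beta^+)\cos\theta<1\leq h_1/d_1$) then makes the bracket strictly positive, so $\gamma=0$ and $\phi^-\equiv 0$. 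The continuity conditions at $D,E$ finally give $\phi^+(A_1)=\phi^+(D)=\phi^+(E)=0$ on three non-collinear points (as $d_1>0$), whence $\phi^+\equiv 0$, completing the proof.
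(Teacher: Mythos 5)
Your algebraic reduction is correct: parametrizing the kernel by $\phi^-=\gamma L$, $\phi^+=\mathbf{g}^+\cdot(x-A_1)$, using the two continuity conditions to write $\mathbf{g}^+=\gamma\mathbf{n}_{23}+\mu\mathbf{n}_h$ with $\mu=-\gamma h_1/d_1$, and feeding this into the flux condition does collapse everything to $\gamma\bigl[\beta^+h_1/d_1-(\beta^+-\beta^-)\cos\theta\bigr]=0$; I checked the signs and they are consistent with your orientation conventions. This is a legitimate alternative route to the paper's proof (Appendix A.1), which instead writes $\phi=I_{h,T}\phi+c_1(w-I_{h,T}w)$ with $w$ built from the distance to $\Gamma_{h,T}^{ext}$ and reduces to one scalar equation for the flux jump $c_1$, whose solvability hinges on $0\le\nabla I_{h,T}w\cdot\mathbf{n}_h\le 1$; in your notation that quantity is exactly $(d_1/h_1)\cos\theta$, so the two reductions are equivalent in content (the paper's version has the added benefit of an explicit basis formula, reused later for Lemmas 9, 12 and the 3D case).

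The genuine gap is that the only place where the hypothesis $\alpha_{max}\le\pi/2$ enters — your two geometric claims — is not actually established. The claim $\cos\theta=\mathbf{n}_{23}\cdot\mathbf{n}_h\in(0,1]$ is not ``immediate'' from both normals pointing toward $A_1$: take $A_1=(0,0)$, $A_2=(1,0)$, $A_3=(10,1)$, $D=(0.9,0)$, $E=0.05\,A_3$; both normals point toward $A_1$ yet $\mathbf{n}_{23}\cdot\mathbf{n}_h<0$. Positivity genuinely needs the right-angle bounds at $A_2$ and $A_3$, and $h_1/d_1\ge1$ is left as a plan. Both facts are true under the hypothesis and can be closed, e.g., in coordinates $A_2=(0,0)$, $A_3=(a,0)$, $A_1=(p,q)$: the angles at $A_2,A_3$ are $\le\pi/2$ iff $0\le p\le a$, and with $D=A_1+s(A_2-A_1)$, $E=A_1+t(A_3-A_1)$ one gets $(E-D)\cdot(1,0)=(s-t)p+ta\ge st\,a>0$ for $s,t\in(0,1]$, which simultaneously gives $\cos\theta>0$ and $|DE|\ge st\,a$, hence $d_1=st\,a\,h_1/|DE|\le h_1$. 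Until such an argument is supplied, the proof is incomplete at precisely the step that constitutes the paper's whole effort (their projection argument that $A_{2,\perp}$ lies on the correct side of $A_1A_3$ and $\overrightarrow{A_1A_3}\cdot\mathbf{t}_h\ge0$). Two minor points: three linear constraints on the six-dimensional piecewise-affine space give $\dim S_h(T)\ge3$, not $\le3$ (for uniqueness you only need injectivity of the nodal map, and for unisolvence you need injectivity together with $\dim\ge3$); and you should say a word about the degenerate configurations allowed by Assumption A (e.g., $D$ or $E$ coinciding with a vertex), where $d_1>0$ and the separation of one vertex from the other two need mild care.
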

\begin{proof}:
See  Appendix \ref{pro_lem_unique}. \qed
\end{proof}

\textbf{Linear IFE}. On an interface element $T\in\mathcal{T}_h^\Gamma$, we define the immersed finite element as $(T, S_h(T), \Pi_T)$, where
\begin{equation*}
\Pi_T=\{ N_1,N_2,N_3 \},~ N_i(\phi)=\phi(A_i), ~i=1,2,3.
\end{equation*}

On a non-interface element $T\in \mathcal{T}^{non}_h$, we denote the set of linear functions by $\mathbb{P}_1(T)$. Then, the IFE space $V_h^{{\rm IFE}}$ can be defined as the set of all functions satisfying
\begin{equation*}
\left\{
\begin{aligned}
&\phi|_T \in S_h(T), ~\forall  T\in\mathcal{T}_h^\Gamma,\\
&\phi|_T \in \mathbb{P}_1(T), ~\forall  T\in\mathcal{T}_h^{non},\\
&\phi \mbox{ is continuous at every vertices } x_i\in\mathcal{N}_h.
\end{aligned}
\right.
\end{equation*}
The IFE space $V_h^{{\rm IFE}}$ is a modification to the standard linear conforming finite element space to recover the optimal approximation capability. If $\beta^+=\beta^-$, the IFE space $V_h^{{\rm IFE}}$ becomes the standard linear conforming finite element space.  We also need the following space for homogeneous boundary condition 
$$V_{h,0}^{{\rm IFE}}=\{v\in V_h^{{\rm IFE}} : v(x_b)=0,~\forall x_b\in\mathcal{N}_h  \mbox{ and } x_b\in \partial \Omega\}.$$

\section{The parameter free PPIFE method}\label{sec_3}
To present the new  method, we first need a local lifting operator. On each interface element $T\in\mathcal{T}_h^\Gamma$, we define the space
\begin{equation*}
W_h(T):=\{\nabla v_h :   \forall v_h\in S_h(T) \}.
\end{equation*}
Let $e\in\mathcal{E}_h^\Gamma$ be an interface edge shared by two interface elements $T_1$ and $T_2$ such that $\overline{e}=\overline{T_1}\cap \overline{T_2}$.  We define a space associated with the edge $e$ as
\begin{equation}\label{lift2}
W_e:=\{w_h\in (L^2(\Omega))^2:~ w_h|_{T_1}\in W_h(T_1),~ w_h|_{T_2}\in W_h(T_2),~ w_h|_{\Omega\backslash(T_1\cup T_2)}=0\}.
\end{equation}
Given a scalar or vector function, the jump and average across the edge $e$ are denoted by
\begin{equation*}
[v]_e\textbf{n}_e:=(v|_{T_1}-v|_{T_2})\textbf{n}_e,~~\{v\}_e:=\frac{1}{2}(v|_{T_1}+v|_{T_2}),
\end{equation*}
where $\textbf{n}_e$ is the unit normal of $e$ pointing from $T_1$ to $T_2$.

We introduce a local  {\em  lifting operator}  $r_e : L^2(e)\rightarrow W_e$ for each $e\in\mathcal{E}_h^\Gamma$, which is defined as a functional $r_e(\varphi)\in W_e$ such that  for all $\varphi \in L^2(e)$,
\begin{equation}\label{def_lift}
\int_\Omega \beta_h(x) r_e(\varphi)\cdot w_hdx=\int_e\{\beta_h w_h\cdot \textbf{n}_e\}_e\, \varphi \,ds,\qquad \forall w_h\in W_e,
\end{equation}
where $\textbf{n}_e$ is the unit normal of the edge $e$. Since $w_h|_{\Omega\backslash(T_1\cup T_2)}=0$ for all $w_h\in W_e$,  we know that $r_e$ is a local lifting operator. Choosing $w_h=\nabla v_h$ in (\ref{def_lift}) and using $r_e(\varphi)|_{\Omega\backslash(T_1\cup T_2)}=0$, we find  
\begin{equation}\label{coneect}
\sum_{i=1,2}\int_{T_i} \beta_h(x) r_e(\varphi)\cdot \nabla v_hdx=\int_e\{\beta_h \nabla v_h\cdot \textbf{n}_e\}_e\varphi ds,\qquad \forall v_h\in V_h^{{\rm IFE}}.
\end{equation}

\textbf{The parameter free PPIFE method}: find $u_h\in V_{h,0}^{{\rm IFE}}$ such that
\begin{equation}\label{ph1}
A_h(u_h,v_h):=a_h(u_h,v_h)+s_h(u_h,v_h)=\int_{\Omega}fv_hdx, \qquad\forall v_h\in V_{h,0}^{{\rm IFE}},
\end{equation}
where
\begin{equation}\label{ph2}
\begin{aligned}
a_h(u_h,v_h):=\sum_{T\in\mathcal{T}_h}\int_T\beta_h(x)\nabla u_h\cdot\nabla v_hdx-\sum_{e\in\mathcal{E}_h^\Gamma}\int_e\left(\{\beta_h\nabla u_h\cdot\textbf{n}_e\}_e[v_h]_e+\{\beta_h\nabla v_h\cdot\textbf{n}_e\}_e[u_h]_e\right)ds
\end{aligned}
\end{equation}
and
\begin{equation}\label{sh}
s_h(u_h,v_h):=4\sum_{e\in\mathcal{E}_h^\Gamma}\int_{\Omega}\beta_h(x)r_e([u_h]_e)\cdot r_e([v_h]_e)dx.
\end{equation}

\begin{remark}\ \\
(1) The second term of the bilinear form $a_h(\cdot,\cdot)$ and the stability term $s_h(\cdot,\cdot)$ are added to
offset the errors from the discontinuities of IFE functions across interface edges. Those terms are zero
if $\beta^+=\beta^-$ and  the method becomes the standard  linear conforming finite element method.\ \\
(2) In practical implementation, these additional terms are only evaluated on interface edges and interface elements. Thus the extra computational cost in computing those terms is not significant in general.\ \\
(3)The idea of using liftings comes from the discontinuous Galerkin methods (see Chapter 4.3 in the book \cite{di2011mathematical}). The idea has also be applied to the cut finite element methods \cite{lehrenfeld2016removing,2016High}. However,  different from the discontinuous Galerkin methods and the original PPIFE method \cite{taolin2015siam}, we do not need to include the $\sum_{e\in\mathcal{E}_h^\Gamma} h^{-1}\int_{e}[u_h]_e[v_h]_eds$ \ term in the bilinear form since the functions in the IFE space are continuous at vertices of the triangulation.
\end{remark}

\begin{remark}
The local lifting operator $r_e$ needed in the parameter free penalty term $s_h(u_h,v_h)$ is easy to be computed. Let $T_1$ and $T_2$  be  two interface elements sharing the edge $e$. Given a function $\varphi\in L^2(e)$, from the definition (\ref{def_lift}), we know that the support of $r_e(\varphi)$ is $T_1\cup T_2$ and $r_e(\varphi)$ has the following form
\begin{equation}\label{lift_imp1}
r_e(\varphi)|_{T_i}=
\left\{
\begin{aligned}
&c_i\textbf{t}_{i,h}+\beta^-d_i\textbf{n}_{i,h}\qquad \mbox{ in }  T_{i,h}^+,\\
&c_i\textbf{t}_{i,h}+\beta^+d_i\textbf{n}_{i,h}\qquad \mbox{ in }  T_{i,h}^-,
\end{aligned}\right. \qquad i=1,2,
\end{equation}
where $T_{i,h}^\pm = T_i\cap\Omega_h^\pm$, $\textbf{n}_{i,h}=\textbf{n}_h|_{T_i}$ and $\textbf{t}_{i,h}=\textbf{t}_h|_{T_i}$, $i=1,2$.
We show that the coefficients $c_1, d_1, c_2, d_2$ can be expressed explicitly.
Choosing basis functions of $W_e$ as the test function $w_h$ in (\ref{def_lift}), for example,
\begin{equation}\label{lift_imp2}
\omega_1(x)=
\left\{
\begin{aligned}
&\textbf{t}_{1,h}~~\mbox{ if } x\in T_1,\\
&0\qquad   \mbox{ otherwise, }
\end{aligned}\right.
\qquad
\omega_2(x)=
\left\{
\begin{aligned}
&\beta^-\textbf{n}_{1,h}~~~~\mbox{ if } x\in T_{1,h}^+,\\
&\beta^+\textbf{n}_{1,h}~~~~\mbox{ if } x\in T_{1,h}^-,\\
&0\qquad\qquad   \mbox{ otherwise, }
\end{aligned}\right.
\end{equation}
we  obtain
\begin{equation}\label{lift_coet1}
c_1=\frac{\textbf{t}_{1,h}\cdot \textbf{n}_e \int_e\beta_h \varphi ds}{2(\beta^+|T_{1,h}^+|+\beta^-|T_{1,h}^-|)}, \qquad d_1=\frac{\textbf{n}_{1,h}\cdot \textbf{n}_e\int_e\varphi ds}{2(\beta^-|T_{1,h}^+|+\beta^+|T_{1,h}^-|)}.
\end{equation}
Similarly, we have
\begin{equation}\label{lift_coet2}
c_2=\frac{\textbf{t}_{2,h}\cdot \textbf{n}_e \int_e \beta_h  \varphi ds}{2(\beta^+|T_{2,h}^+|+\beta^-|T_{2,h}^-|)}, \qquad d_2=\frac{\textbf{n}_{2,h}\cdot \textbf{n}_e \int_e \varphi ds}{2(\beta^-|T_{2,h}^+|+\beta^+|T_{2,h}^-|)}.
\end{equation}
\end{remark}

\section{The error analysis}\label{sec_analysis}

In the analysis, we use $C$ to denote a generic error constant that is independent of $h$ and the interface location relative to the mesh but may depend on the coefficients $\beta^\pm$. The independence of the interface location relative to the mesh means that the constant $C$ is independent of how small $T\cap\Omega^+$ or $T\cap\Omega^-$ might be.


Denote  $\mbox{dist}(x,\Gamma)$ as  the distance between a point $x$ and the interface $\Gamma$, and  $N(\Gamma,\delta)=\{x\in\mathbb{R}^2: \mbox{dist}(x,\Gamma)< \delta\}$  as the neighborhood of $\Gamma$ of thickness $\delta$. Define a signed distance function near the interface as
\begin{equation*}
\rho(x)=\left\{
\begin{aligned}
&\mbox{dist}(x,\Gamma)\qquad&&\mbox{if } x\in\Omega^+\cap N(\Gamma,\delta_0)\\
&0&&\mbox{if } x\in\Gamma\\
&-\mbox{dist}(x,\Gamma)&&\mbox{if }  x\in\Omega^-\cap N(\Gamma,\delta_0).
\end{aligned}\right.
\end{equation*}
It is  known that there exists a constant $\delta_0>0$ such that the signed distance function $\rho(x)$ is well-defined in $N(\Gamma,\delta_0)$ and $\rho(x)\in C^2(N(\Gamma,\delta_0))$ since we assume that $\Gamma\in C^2$ (see \cite{foote1984regularity}). Now the unit normal and tangent vectors of the interface can be evaluated as 
$\textbf{n}(x)=\nabla \rho$ and $\textbf{t}(x) =\left(\frac{\partial\rho}{\partial x_2},-\frac{\partial\rho}{\partial x_1}\right)^T$, and 
these functions $\textbf{n}(x)$ and $\textbf{t}(x)$ are defined in $N(\Gamma,\delta_0)$.

\textbf{Assumption B.}  We assume that $h<\delta_0$ so that  $T\subset N(\Gamma,\delta_0)$ for all interface elements $T\in\mathcal{T}_h^\Gamma$.

Since $\rho(x)\in C^2(N(\Gamma,\delta_0))$, we have
\begin{equation}\label{nt_smooth}
\textbf{n}(x)\in \left(C^1(\overline{T})\right)^2\quad \mbox{ and }\quad\textbf{t}(x)\in \left(C^1(\overline{T})\right)^2,\quad \forall T\in\mathcal{T}_h^\Gamma.
\end{equation}
For any interface element $T\in\mathcal{T}_h^\Gamma$, by  Rolle's Theorem, there exists at least one point $x^*\in\Gamma\cap T$, see Figure~\ref{interface_ele},  such that
\begin{equation}\label{rolle}
\textbf{n}(x^*)=\textbf{n}_h(x^*) \quad \mbox{ and }\quad \textbf{t}(x^*)=\textbf{t}_h(x^*).
\end{equation}
By using  Taylor's expansion  at $x^*$, we further have
\begin{equation}\label{error_nt}
\|\textbf{n}-\textbf{n}_h\|_{L^\infty(T)}\leq Ch\quad \mbox{ and }\quad\|\textbf{t}-\textbf{t}_h\|_{L^\infty(T)}\leq Ch,\quad \forall T\in\mathcal{T}_h^\Gamma.
\end{equation}

In the following lemma, we present a $\delta$-strip argument that will be used for the error estimate in the region near the interface (see the third inequality in Lemma 2.1 in \cite{Li2010Optimal}).
\begin{lemma}\label{strip}
Let $\delta$ be sufficiently small. Then it holds for any $v\in H^1(\Omega)$ that 
\begin{equation*}
\|v\|_{L^2(N(\Gamma,\delta))}\leq C\sqrt{\delta}\|v\|_{H^1(\Omega)}.
\end{equation*}
Furthermore, if $v|_{\Gamma}=0$, then there holds
\begin{equation*}
\|v\|_{L^2(N(\Gamma,\delta))}\leq C\delta \|\nabla v\|_{L^2(N(\Gamma,\delta))}.
\end{equation*}
\end{lemma}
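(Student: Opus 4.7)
The plan is to work in tubular/Fermi coordinates around $\Gamma$. Since $\Gamma\in C^2$ is compact, for $\delta<\delta_0$ sufficiently small the map $\Psi:\Gamma\times(-\delta,\delta)\to N(\Gamma,\delta)$ defined by $\Psi(y,r)=y+r\,\textbf{n}(y)$ is a $C^1$ diffeomorphism, and the Jacobian has the form $J(y,r)=1-r\kappa(y)$ with $\kappa$ denoting the signed curvature of $\Gamma$. Since $\|\kappa\|_{L^\infty(\Gamma)}<\infty$, there exist constants $c_1,c_2>0$ (independent of $\delta$) such that $c_1\leq J(y,r)\leq c_2$ uniformly in $N(\Gamma,\delta)$. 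Thus all strip integrals can be pulled back to $\Gamma\times(-\delta,\delta)$ with uniformly equivalent measures.

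For the first bound, I would start from the identity
\begin{equation*}
v\bigl(\Psi(y,r)\bigr)=v(y)+\int_0^r \partial_{\textbf{n}}v\bigl(\Psi(y,t)\bigr)\,dt,
\end{equation*}
valid for $H^1$ functions along almost every normal ray. Squaring, using $(a+b)^2\le 2a^2+2b^2$ and Cauchy--Schwarz on the $t$-integral gives a pointwise estimate of $|v(\Psi(y,r))|^2$ by $2|v(y)|^2+2|r|\int_0^r|\partial_{\textbf{n}}v|^2dt$. Integrating in $r$ over $(-\delta,\delta)$ first produces the factor $2\delta\,|v(y)|^2$ from the boundary term, while a Fubini swap turns the gradient contribution into $\int_{-\delta}^{\delta}\tfrac{\delta^2-t^2}{2}|\partial_{\textbf{n}}v|^2dt$, which is bounded by $C\delta^2\|\nabla v\|_{L^2(N(\Gamma,\delta))}^2$. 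Integrating the boundary term in $y$ over $\Gamma$ and applying the standard trace theorem $\|v\|_{L^2(\Gamma)}\le C\|v\|_{H^1(\Omega)}$ (together with the bounded Jacobian) yields
\begin{equation*}
\|v\|_{L^2(N(\Gamma,\delta))}^2\le C\delta\|v\|_{H^1(\Omega)}^2+C\delta^2\|\nabla v\|_{L^2(\Omega)}^2\le C\delta\|v\|_{H^1(\Omega)}^2,
\end{equation*}
and taking the square root gives the first inequality.

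For the second bound, the hypothesis $v|_\Gamma=0$ eliminates the trace term in the identity above, so
\begin{equation*}
v\bigl(\Psi(y,r)\bigr)=\int_0^r \partial_{\textbf{n}}v\bigl(\Psi(y,t)\bigr)\,dt,
\end{equation*}
and Cauchy--Schwarz yields $|v(\Psi(y,r))|^2\le |r|\int_{-\delta}^{\delta}|\partial_{\textbf{n}}v(\Psi(y,t))|^2dt$. Integrating in $r$ over $(-\delta,\delta)$ produces an $r$-integral equal to $\delta^2$, after which the remaining $(y,t)$-integration reconstructs $\|\nabla v\|_{L^2(N(\Gamma,\delta))}^2$ up to the uniform Jacobian constants, giving $\|v\|_{L^2(N(\Gamma,\delta))}\le C\delta\|\nabla v\|_{L^2(N(\Gamma,\delta))}$.

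I do not anticipate a serious obstacle; the only subtlety is justifying that the normal line formula holds almost everywhere for $H^1$ functions, which follows from a density argument approximating $v$ by $C^\infty(\overline{\Omega})$ functions and passing to the limit using the $H^1$ continuity of both sides. The uniform bounds on the Jacobian $J$ (and thus on the equivalence of $L^2$ norms under $\Psi$) rely only on $\Gamma\in C^2$ and $\delta<\delta_0$, both guaranteed by hypothesis.
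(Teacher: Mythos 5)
Your argument is correct. The paper itself does not prove this lemma but simply cites it (the third inequality of Lemma 2.1 in \cite{Li2010Optimal}), and your tubular-coordinate proof --- pulling the strip integral back via $\Psi(y,r)=y+r\,\textbf{n}(y)$ with uniformly bounded Jacobian $1-r\kappa(y)$, writing $v$ along normal rays by the fundamental theorem of calculus (justified a.e.\ by density of smooth functions), applying Cauchy--Schwarz, and invoking the trace theorem on $\Gamma$ for the first bound while dropping the trace term when $v|_\Gamma=0$ for the second --- is essentially the standard proof found in that reference. The only cosmetic point is that the derivative along the ray is $\nabla v(\Psi(y,t))\cdot\textbf{n}(y)$ (normal at the foot point $y$), which you should bound by $|\nabla v|$ before reassembling the $L^2$ norm of the full gradient; this changes nothing in the estimates.
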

%
We need the following well-known extension result (see \cite{Gilbargbook}).
\begin{lemma}\label{lem_ext}
Assume that $u^\pm\in H^2(\Omega^\pm)$. Then there exist extensions $\mathbb{E}^\pm u^\pm \in H^2(\Omega)$ such that
\begin{equation*}
(\mathbb{E}^\pm u^\pm)|_{\Omega^\pm}=u^\pm~\mbox{ and }~\|\mathbb{E}^\pm u\|_{H^2(\Omega)}\leq C\|u^\pm\|_{H^2(\Omega^\pm)}
\end{equation*}
with a constant $C>0$ depending only on $\Omega^\pm$.
\end{lemma}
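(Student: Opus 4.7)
The plan is to reduce the claim to the classical higher-order reflection construction, since Lemma~\ref{lem_ext} is a direct application of the standard $H^2$ extension theorem to the present geometry. Because $\overline{\Omega^-}\subset\Omega$ and $\partial\Omega^-=\Gamma\in C^2$, the only boundary across which $u^-$ must be extended is the $C^2$ interface $\Gamma$. For $u^+$, although $\partial\Omega^+=\Gamma\cup\partial\Omega$, the target of the extension is only $\Omega$ itself (not all of $\mathbb{R}^2$), so again it suffices to extend across $\Gamma$ and leave $u^+$ unchanged outside a neighbourhood of $\Gamma$. This observation sidesteps any issue caused by the polygonal corners of $\partial\Omega$.

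First I would cover a tubular neighbourhood of $\Gamma$ by finitely many open sets $U_i\subset\Omega$ together with $C^2$ diffeomorphisms $\Phi_i$ that flatten $\Gamma\cap U_i$ into a segment of the axis $\{y_2=0\}$, mapping the two sides of $\Gamma$ onto $\{\pm y_2>0\}$. On each straightened chart, I would apply the Hestenes-type reflection
\begin{equation*}
(Ev)(y_1,y_2)=\sum_{j=1}^{3}c_j\, v(y_1,-\lambda_j y_2),\qquad y_2<0,
\end{equation*}
with distinct $\lambda_1,\lambda_2,\lambda_3>0$ and coefficients $c_j$ determined by the linear system $\sum_{j=1}^{3}c_j(-\lambda_j)^k=1$ for $k=0,1,2$. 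These three matching conditions make the extension $C^2$ across $\{y_2=0\}$ for smooth data, and a standard density argument then promotes $E$ to a bounded linear operator from $H^2$ of the upper half of $\Phi_i(U_i)$ into $H^2$ of all of $\Phi_i(U_i)$.

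I would next pull each local reflection back through $\Phi_i$, combine the pieces with a smooth partition of unity $\{\chi_i\}$ subordinate to $\{U_i\}$, and glue the result to $u^\pm$ on $\Omega^\pm$ away from $\Gamma$ via one further cutoff function supported in the tubular neighbourhood and equal to one near $\Gamma$. The resulting $\mathbb{E}^\pm u^\pm$ then lies in $H^2(\Omega)$, because the only possible seam is $\Gamma$, across which the reflection has been glued $C^2$-smoothly. The Leibniz rule applied to $\chi_i$, together with the boundedness of each local reflection, delivers the required estimate $\|\mathbb{E}^\pm u^\pm\|_{H^2(\Omega)}\leq C\|u^\pm\|_{H^2(\Omega^\pm)}$, with $C$ determined only by the charts, the cutoffs, and the $C^2$-character of $\Gamma$, i.e.\ by $\Omega^\pm$ alone.

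The main obstacle is bookkeeping rather than any deep difficulty. One must verify that the $C^2$ change of variables $\Phi_i$ introduces only bounded multiplicative factors in the $H^2$ seminorms (automatic because both $\Phi_i$ and $\Phi_i^{-1}$ are at least $C^{1,1}$), and that the extra derivatives falling on the partition of unity during the Leibniz expansion are controlled by $\|u^\pm\|_{H^2(\Omega^\pm)}$. Both are routine once the tubular neighbourhood is chosen with a width fixed in advance in terms of the curvature of $\Gamma$, so that all constants absorbed into $C$ are intrinsic to $\Omega^\pm$, exactly as in the classical treatment of \cite{Gilbargbook}.
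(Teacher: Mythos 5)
Your proposal is correct: the paper itself gives no proof of Lemma~\ref{lem_ext}, simply citing the classical extension theorem in \cite{Gilbargbook}, and your argument (local flattening of the $C^2$ interface $\Gamma$, Hestenes reflection with the Vandermonde-determined coefficients, partition of unity, and the observation that only extension across $\Gamma$ — not across the polygonal $\partial\Omega$ — is needed) is exactly the standard proof of that cited result. Nothing further is required.
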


Recalling $T^\pm=T\cap \Omega^\pm,~ T_h^\pm=T\cap\Omega_h^\pm$ for all $T\in \mathcal{T}_h^\Gamma$, we define
\begin{equation}\label{tri}
T^\triangle:=(T_h^+\backslash T^+)\cup(T_h^-\backslash T^-).
\end{equation}
We shall need the following estimate on the region $T^\triangle$ (see Lemma 2 in \cite{James1994A}).
\begin{lemma}\label{lem_h3}
Assume  that $w\in H^1(T)$ and $T\in\mathcal{T}_h^\Gamma$. Then there is a constant $C$, independent of $h$ and $w$, such that
\begin{equation*}
\|w\|_{L^2(T^\triangle)}^2\leq C(h^2\|w\|^2_{L^2(\Gamma\cap T)}+h^4\|\nabla w\|^2_{L^2(T^\triangle)}).
\end{equation*}
\end{lemma}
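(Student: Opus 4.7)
The plan is to exploit the geometric fact that $T^\triangle$ is a thin sliver sandwiched between the true interface $\Gamma\cap T$ and its chord $\Gamma_h\cap T$, whose width in the normal direction is $O(h^2)$ by the $C^2$ regularity of $\Gamma$. I will parametrize this sliver by the normal coordinate to $\Gamma$, then use a one-dimensional fundamental theorem of calculus argument to bound $w(x)^2$ pointwise by the boundary value on $\Gamma\cap T$ plus an integral of $|\nabla w|^2$ along the normal segment, and finally integrate.

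Concretely, I would first fix the coordinate system on $T$ induced by the signed distance function $\rho$ introduced before Assumption B. Since $\Gamma\in C^2$ and $T\subset N(\Gamma,\delta_0)$, the closest-point projection $\pi:T\to\Gamma$ is well-defined and $C^1$, and the map $\Phi(y,s):=y+s\,\mathbf{n}(y)$ is a $C^1$-diffeomorphism from $(\Gamma\cap T)\times(-\delta_0,\delta_0)$ onto its image with Jacobian $J(y,s)=1-s\kappa(y)+O(s^2)$ bounded above and below by positive constants. Next, I would show that for each $y\in\Gamma\cap T$, the set $\{s:\Phi(y,s)\in T^\triangle\}$ is contained in an interval $I(y)$ of length $\le Ch^2$: this follows because any point $\Phi(y,s)\in T^\triangle$ lies between $\Gamma$ and its chord $\Gamma_h$ within $T$, and Taylor expansion of $\Gamma$ at either endpoint $D$ or $E$ of $\Gamma\cap T$ gives the $O(h^2)$ chord-height bound using $\|\Gamma\|_{C^2}<\infty$ and $|T|=O(h)$.

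With this parametrization in hand, I would establish the pointwise identity (for $w\in C^1(\overline{T})$, then extended to $H^1(T)$ by density and the standard trace theorem on $\Gamma\cap T$)
\begin{equation*}
w(\Phi(y,s))=w(y)+\int_0^s \nabla w(\Phi(y,t))\cdot\mathbf{n}(y)\,dt,
\end{equation*}
square both sides, and apply Cauchy--Schwarz to obtain
\begin{equation*}
w(\Phi(y,s))^2\le 2\,w(y)^2+2|s|\int_{I(y)}|\nabla w(\Phi(y,t))|^2\,dt.
\end{equation*}
Integrating this inequality against $|J(y,s)|\,ds\,d\sigma(y)$ over $(\Gamma\cap T)\times I(y)$, using the uniform Jacobian bound, $|I(y)|\le Ch^2$, and undoing the change of variables to recognize $\int |\nabla w|^2\,dx$ on $T^\triangle$, yields precisely
\begin{equation*}
\|w\|_{L^2(T^\triangle)}^2\le C\bigl(h^2\|w\|_{L^2(\Gamma\cap T)}^2+h^4\|\nabla w\|_{L^2(T^\triangle)}^2\bigr).
\end{equation*}

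The main obstacles are two small technical points. The first is justifying the $O(h^2)$ width of the sliver uniformly in the interface location relative to the mesh — this needs Assumption A (so $\Gamma\cap T$ is an arc connecting $D$ to $E$) plus a careful Taylor expansion using the $C^2$ parametrization of $\Gamma$. The second is confirming that the parametrization $\Phi$ does cover $T^\triangle$ entirely, i.e., that every $x\in T^\triangle$ has $\pi(x)\in\Gamma\cap T$ with the joining normal segment lying in $T^\triangle$; this is routine once the sliver is confined to $N(\Gamma,Ch^2)$ and $h$ is small enough relative to $\delta_0$, but should be stated explicitly. The density step to pass from $C^1(\overline{T})$ to $H^1(T)$ is standard.
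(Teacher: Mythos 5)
Your overall strategy -- a one--dimensional fundamental-theorem-of-calculus argument across a sliver of normal width $O(h^2)$, combined with a trace term on $\Gamma\cap T$ -- is the right one, and it is in fact the spirit of the proof the paper relies on: the paper does not prove this lemma itself but cites Lemma 2 of the Bramble--King reference \cite{James1994A}, whose argument is exactly of this type. The gap lies in your choice of fibration. You integrate along the normals of $\Gamma$ via the closest-point map $\Phi(y,s)=y+s\,\mathbf{n}(y)$, and you need two confinement properties that you dismiss as routine: (i) every $x\in T^\triangle$ projects to a point $\pi(x)\in\Gamma\cap T$ (note $w$ is only defined on $T$, so $w(\pi(x))$ is otherwise meaningless and the trace term must live on $\Gamma\cap T$); and (ii) the joining segment stays inside $T^\triangle$, which is indispensable because the right-hand side of the lemma contains $\|\nabla w\|_{L^2(T^\triangle)}$ and nothing larger. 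Property (ii) is genuinely false in admissible configurations: Assumption A does not prevent the arc $\Gamma\cap T$ from crossing its chord $DE$ inside $T$ (an S-shaped arc), and for a point $x\in T_h^+\setminus T^+$ near such a crossing the nearest point $\pi(x)$ can lie on the portion of the arc on the other side of the chord, so the straight segment from $x$ to $\pi(x)$ passes through $T_h^-\cap T^-$, which is not part of $T^\triangle$. Your argument then only yields the weaker estimate with $\|\nabla w\|$ taken over a full $O(h^2)$-neighborhood of $\Gamma$ inside $T$ (which, incidentally, would still suffice for the paper's applications, but is not the stated lemma). Property (i) near the endpoints $D,E$ also needs a real argument (the full curve $\Gamma$ continues outside $T$ and a priori could capture the projection), although there a wedge-angle computation does save you.

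The clean repair -- and what the cited proof does -- is to fibrate over the chord rather than over $\Gamma$: place coordinates so that $DE$ lies on the $x_1$-axis, use the Rolle/Taylor facts already recorded in (\ref{rolle})--(\ref{error_nt}) to write $\Gamma\cap T$ as a graph $x_2=\phi(x_1)$ over the chord with $\|\phi\|_{L^\infty}\le Ch^2$ and slope $O(h)$, and integrate $w$ along the fixed direction perpendicular to the chord from a sliver point up to the graph. By construction each such vertical segment joins a point of $T^\triangle$ to a point of $\Gamma\cap T$ and remains between the chord and the graph, i.e.\ inside $T^\triangle$, so both confinement issues disappear; the Jacobian/arclength comparison $ds\simeq dx_1$ replaces your tubular-neighborhood Jacobian, and the rest of your computation goes through verbatim. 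I recommend you restructure the proof this way (or explicitly weaken the gradient term to a neighborhood of $\Gamma$ and check that this suffices where the lemma is used); as written, the segment-confinement step is a genuine gap, not a routine verification.
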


\subsection{Approximation properties of the linear IFE space}\label{sec_anal}

We introduce an {\em interpolation operator} $I_h^{{\rm IFE}}: C^0(\overline{\Omega})\rightarrow V_h^{{\rm IFE}}$ such that
\begin{equation*}
(I_h^{{\rm IFE}}v)(x_i)=v(x_i),\quad \forall x_i\in\mathcal{N}_h, \quad \forall v\in C^0(\overline{\Omega}).
\end{equation*}
Let $V_h$ be the standard linear conforming finite element space associated with $\mathcal{T}_h$. Define the corresponding nodal interpolation operator $I_h: C^0(\overline{\Omega})\rightarrow V_h$ such that
\begin{equation*}
(I_hv)(x_i)=v(x_i),\quad\forall x_i\in\mathcal{N}_h,\quad\forall v\in C^0(\overline{\Omega}).
\end{equation*}
%
To simplify the notation, for a function $v\in \widetilde{H}^2(\Omega)$, we define
\begin{equation*}
v^i(x):=v|_{\Omega^i},\qquad \forall x\in\Omega^i,\quad i=+,-,
\end{equation*}
and for a function $v_h\in S_h(T)$ on an interface element $T\in\mathcal{T}_h^\Gamma$, 
\begin{equation*}
v_h^i(x):=(\mathbb{E}^{poly}v_h|_{T_h^i})(x),\qquad \forall x\in T,\quad i=+,-.
\end{equation*}
Here $\mathbb{E}^{poly}:\mathbb{P}_1(T_h^i)\rightarrow\mathbb{P}_1(T)$ is a polynomial  extension operator such that   $(\mathbb{E}^{poly}w)|_{T_h^i}=w$ for all $w\in\mathbb{P}_1(T_h^i)$, $i=+, -$, where  $\mathbb{P}_1(\Lambda)$ denotes the set of linear functions defined on a domain $\Lambda$.
%
We also define an operator $\mathbb{E}_h$ such that, for all $v\in\widetilde{H}^2(\Omega)$,
\begin{equation}\label{epm}
(\mathbb{E}_{h}v)(x)=
\left\{\begin{aligned}
&\mathbb{E}^+v^+\qquad \mbox{ if }x\in\Omega_h^+,\\
&\mathbb{E}^-v^-\qquad \mbox{ if }x\in\Omega_h^-.\\
\end{aligned}
\right.
\end{equation}
To approximate the broken function $\mathbb{E}_hv$ with $v\in\widetilde{H}^2(\Omega)$, we introduce a new  interpolation operator $I_h^{BK}$ on interface elements such that
\begin{equation}\label{ehpm}
(I_h^{BK}v)|_{T_h^i}=I_h\mathbb{E}^i v^i, \quad i=+,-, \quad\forall T\in\mathcal{T}_h^\Gamma, \quad \forall v\in\widetilde{H}^2(\Omega).
\end{equation}
On an interface element $T\in\mathcal{T}_h^\Gamma$,  for a function $v\in \widetilde{H}^2(\Omega)$, we define 
\begin{equation}\label{shuangfang1}
\begin{aligned}
&[\![v]\!](x):=\mathbb{E}^+v^+(x)-\mathbb{E}^-v^-(x),\qquad&\forall x\in T,\\
&[\![\beta \nabla v\cdot \textbf{n}]\!](x):=\beta^+ \nabla (\mathbb{E}^+v^+)\cdot\textbf{n}(x)-\beta^- \nabla (\mathbb{E}^+v^-)\cdot\textbf{n}(x),\qquad&\forall x\in T,\\
&[\![ I_h^{BK}v]\!](x):=(I_h\mathbb{E}^+v^+)(x)-(I_h\mathbb{E}^-v^-)(x),\qquad&\forall x\in T,\\
&[\![\beta \nabla (I_h^{BK}v)\cdot \textbf{n}]\!](x):=\beta^+ \nabla I_h(\mathbb{E}^+v^+)\cdot\textbf{n}(x)-\beta^- \nabla I_h(\mathbb{E}^-v^-)\cdot\textbf{n}(x),\qquad&\forall x\in T,
\end{aligned}
\end{equation}
and for a function $v_h\in V_h^{{\rm IFE}}$,
\begin{equation}\label{shuangfang2}
\begin{aligned}
&[\![v_h]\!](x):=v_h^+(x)-v_h^-(x),\qquad&\forall x\in T,\\
&[\![\beta \nabla v_h\cdot \textbf{n}]\!](x):=\beta^+ \nabla v^+_h\cdot\textbf{n}(x)-\beta^- \nabla v^-_h\cdot\textbf{n}(x),\qquad&\forall x\in T.
\end{aligned}
\end{equation}
 Note that the difference between $[\![\cdot]\!](x)$ and $[\cdot]_\Gamma(x)$ is the range of $x$.

We introduce {\em auxiliary functions}  on each interface element $T\in\mathcal{T}_h^\Gamma$.
%
Recalling that $D$ and $E$ are intersection points of $\Gamma$ and $\partial T$, we define auxiliary functions $\Upsilon(x)$, $\Psi_D(x)$ and $\Psi_E(x)$ as
\begin{equation}\label{jum_ji1}
\Upsilon(x):=\left\{
\begin{aligned}
\Upsilon^+=a^++b^+x_1+c^+x_2,\quad x=(x_1,x_2)\in T_h^+,\\
\Upsilon^-=a^-+b^-x_1+c^-x_2,\quad x=(x_1,x_2)\in T_h^-,
\end{aligned}
\right.
\end{equation}
such that
\begin{equation}\label{jum_ji1_cons}
\begin{aligned}
&\Upsilon(A_j)=0,~ j=1,2,3, \\
&\Upsilon^+(D)=\Upsilon^-(D),~~ \Upsilon^+(E)=\Upsilon^-(E),~~\beta^+\nabla\Upsilon^+\cdot \textbf{n}_h-\beta^-\nabla\Upsilon^-\cdot \textbf{n}_h=1,
\end{aligned}
\end{equation}
and
\begin{equation}\label{def_psi_linear1}
\Psi_i(x):=\left\{
\begin{aligned}
\Psi_i^+=a^++b^+x_1+c^+x_2,\quad x=(x_1,x_2)\in T_h^+,\\
\Psi_i^-=a^-+b^-x_1+c^-x_2,\quad x=(x_1,x_2)\in T_h^-,
\end{aligned}
\right. \quad i=D,E,
\end{equation}
such that
\begin{equation}\label{def_psi_linear2}
\begin{aligned}
&\Psi_i(A_j)=0,~ j=1,2,3,~~~i=D,E,\\
&\beta^+\nabla\Psi_i^+\cdot \textbf{n}_h=\beta^-\nabla\Psi_i^-\cdot \textbf{n}_h,\quad i=D,E,\\
&\Psi_D^+(D)-\Psi_D^-(D)=1,~~ \Psi_D^+(E)-\Psi_D^-(E)=0,\\
&\Psi_E^+(D)-\Psi_E^-(D)=0,~~ \Psi_E^+(E)-\Psi_E^-(E)=1.
\end{aligned}
\end{equation}

\begin{remark}
The functions $\Upsilon$, $\Psi_D$ and $\Psi_E$ defined above  exist and are unique. The justification is that the coefficient matrix is the same as that for determining the IFE shape functions in the space $S_h(T)$ if we write a linear system for the unknown coefficients $a^\pm$,$b^\pm$ and $c^\pm$.
\end{remark}

\begin{lemma}\label{lema_fenjie}
On each interface element $T\in \mathcal{T}_h^\Gamma$, let $[\![ I_h^{BK}v]\!]$ and $[\![\beta \nabla (I_h^{BK}v)\cdot \textbf{n}]\!]$ be define in (\ref{shuangfang1}).
Under the condition of Lemma~\ref{lem_unique}, the following identity  holds
\begin{equation}\label{lemma4}
I_h^{BK}v-I_h^{{\rm IFE}}v=[\![ I_h^{BK}v]\!](D)\Psi_D(x)+[\![I_h^{BK}v]\!](E)\Psi_E(x)+[\![\beta \nabla (I_h^{BK}v)\cdot \textbf{n}]\!](x^*)\Upsilon(x). 
\end{equation}
\end{lemma}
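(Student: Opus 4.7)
The strategy is to identify the difference $w := I_h^{BK}v - I_h^{\rm IFE}v$ as an element of the six-dimensional space $\mathcal{W}(T)$ of functions that are linear on each of $T_h^+$ and $T_h^-$ (pairs in $\mathbb{P}_1(T)\times\mathbb{P}_1(T)$), and then match expansion coefficients using the jump data on $\Gamma_h\cap T$ together with vertex values.

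First I would establish the direct sum decomposition $\mathcal{W}(T) = S_h(T) \oplus \mathrm{span}\{\Psi_D,\Psi_E,\Upsilon\}$. Both sides have dimension six, so it suffices to verify that the sum is direct: in any relation $\phi + c_1\Psi_D + c_2\Psi_E + c_3\Upsilon = 0$ with $\phi\in S_h(T)$, reading off the value jumps at $D$ and $E$ and the $\beta_h$-weighted flux jump across $\Gamma_h\cap T$, and invoking (\ref{ifem_modi}), (\ref{jum_ji1_cons}), and (\ref{def_psi_linear2}), instantly yields $c_1=c_2=c_3=0$, whence $\phi=0$. Consequently $w$ admits a unique expansion $w = \phi + c_1\Psi_D + c_2\Psi_E + c_3\Upsilon$. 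Since $I_h^{\rm IFE}v\in S_h(T)$ has zero value jump at $D,E$ and zero flux jump across $\Gamma_h\cap T$, the jumps of $w$ coincide with those of $I_h^{BK}v$, and matching against the normalizations of the auxiliary functions produces $c_1 = [\![I_h^{BK}v]\!](D)$, $c_2 = [\![I_h^{BK}v]\!](E)$, and $c_3 = [\![\beta\nabla(I_h^{BK}v)\cdot\textbf{n}]\!](x^*)$; in the last equality I would use (\ref{rolle}) so that $\textbf{n}(x^*)=\textbf{n}_h$, together with the constancy of the gradients on each linear piece, to pass between the flux jump written with $\textbf{n}$ and with $\textbf{n}_h$.

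It remains to show $\phi\equiv 0$. Under Assumption~B, for $h$ small enough that each vertex of an interface triangle lies on the same side of $\Gamma_h$ as it does of $\Gamma$, one has $A_j\in T_h^{\pm}$ iff $A_j\in\Omega^{\pm}$, so $(I_h^{BK}v)(A_j) = (\mathbb{E}^{\pm}v^{\pm})(A_j) = v(A_j) = (I_h^{\rm IFE}v)(A_j)$ with the matching sign. Hence $w(A_j)=0$ when evaluated via the linear piece containing $A_j$. Since $\Psi_D,\Psi_E,\Upsilon$ all vanish at vertices from that same piece by (\ref{jum_ji1_cons})--(\ref{def_psi_linear2}), this forces $\phi(A_j)=0$ for $j=1,2,3$, and Lemma~\ref{lem_unique} (applicable under $\alpha_{\max}\le\pi/2$) then yields $\phi\equiv 0$, completing (\ref{lemma4}). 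The main obstacle I anticipate is the piecewise bookkeeping at vertices --- carefully matching $T_h^{\pm}$ with $\Omega^{\pm}$ at each $A_j$ so that the vertex identity $w(A_j)=0$ is legitimate; once this and the direct-sum structure are in place, (\ref{lemma4}) follows immediately from inspection of the defining conditions of the three auxiliary functions.
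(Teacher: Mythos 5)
Your proposal is correct and follows essentially the same route as the paper: both arguments identify the coefficients from the jump data at $D$, $E$ and the flux jump (using (\ref{conti_linear}) for $I_h^{{\rm IFE}}v$ and (\ref{rolle}) to pass from $\textbf{n}_h$ to $\textbf{n}(x^*)$), and then kill the remaining $S_h(T)$ component via its vanishing vertex values and the uniqueness in Lemma~\ref{lem_unique}. Your direct-sum/dimension-count framing is merely a repackaging of the paper's direct construction of $v_h$ followed by $w_h-v_h\in S_h(T)$, so no substantive difference or gap.
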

\begin{proof}:
Let  $w_h:=I_h^{BK}v-I_h^{{\rm IFE}}v$. It is easy to verify that $w_h(A_i)=0, i=1,2,3$ and $w_h|_{T_h^\pm}$ are linear functions. Define another piecewise linear function $v_h$ as 
\begin{equation*}
v_h(x):=[\![w_h]\!](D)\Psi_D(x)+[\![w_h]\!](E)\Psi_E(x)+[\![\beta \nabla w_h\cdot \textbf{n}_h]\!]\Upsilon(x).
\end{equation*}
Next, we prove $w_h=v_h$. From the definition (\ref{jum_ji1})-(\ref{def_psi_linear2}), we have
\begin{equation*}
[\![v_h]\!](D)=[\![w_h]\!](D),~ [\![v_h]\!](E)=[\![w_h]\!](E),~[\![\beta \nabla v_h\cdot \textbf{n}_h]\!]=[\![\beta \nabla w_h\cdot \textbf{n}_h]\!],~v_h(A_i)=0, i=1,2,3,
\end{equation*}
which implies $w_h(x)-v_h(x)\in S_h(T)$ and $(w_h-v_h)(A_i)=0, i=1,2,3$. From Lemma~\ref{lem_unique}, we know that the function $w_h-v_h$ is unique and $w_h-v_h=0$ through a simple verification.
Now, we get the decomposition  
\begin{equation}\label{adf}
w_h(x)=v_h(x)=[\![w_h]\!](D)\Psi_D(x)+[\![w_h]\!](E)\Psi_E(x)+[\![\beta \nabla w_h\cdot \textbf{n}_h]\!]\Upsilon(x).
\end{equation}
From (\ref{conti_linear}), (\ref{rolle}) and the definition (\ref{shuangfang2}), we find
\begin{equation*}
\begin{aligned}
&[\![w_h]\!](x_p)=[\![I_h^{BK}v]\!](x_p)-[\![I_h^{{\rm IFE}}v]\!](x_p)=[\![I_h^{BK}v]\!](x_p),\quad x_p=D,E,\\
&[\![\beta \nabla w_h\cdot \textbf{n}_h]\!]=[\![\beta (\nabla I_h^{BK}v)\cdot \textbf{n}_h]\!]-[\![\beta (\nabla I_h^{{\rm IFE}}v)\cdot \textbf{n}_h]\!]=[\![\beta (\nabla I_h^{BK}v)\cdot \textbf{n}]\!](x^*).
\end{aligned}
\end{equation*}
The above identities combined with (\ref{adf}) lead to (\ref{lemma4}).
\qed \end{proof}
\begin{lemma}\label{lem_jiest}
For each interface element $T\in\mathcal{T}^\Gamma_h$, let $\Psi_D(x)$, $\Psi_E(x)$  and $\Upsilon(x)$ be defined in (\ref{jum_ji1})-(\ref{def_psi_linear2}). Under the condition of Lemma~\ref{lem_unique}, there hold
\begin{equation*}
\begin{aligned}
&\|\Psi_i\|^2_{L^2(T)}\leq Ch^2,~|\Psi_i|^2_{H^1(T_h^+\cup T_h^-)}\leq C,\qquad i=D,E,\\
&\|\Upsilon\|^2_{L^2(T)}\leq Ch^4,~|\Upsilon|^2_{H^1(T)}\leq Ch^2,
\end{aligned}
\end{equation*}
where the constant $C$ only depends on $\beta^\pm$ and the shape-regular parameter $\kappa_2$.
\end{lemma}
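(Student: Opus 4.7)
The plan is to write each of the three auxiliary functions explicitly using piecewise barycentric coordinates, and then to bound the resulting coefficients uniformly in the interface location. Assume without loss of generality that $A_1\in T_h^+$ and $A_2,A_3\in T_h^-$, so that $T_h^-$ is a quadrilateral with vertices $A_2,A_3,E,D$ and $T_h^+$ is the sub-triangle $A_1DE$. Let $\lambda_1$ denote the barycentric coordinate of $T$ at $A_1$, and let $\mu_D,\mu_E$ denote the barycentric coordinates of the sub-triangle $T_h^+$ at $D,E$. Any linear function on $T_h^-$ vanishing at $A_2,A_3$ must be a scalar multiple of $\lambda_1$, so each of $\Upsilon,\Psi_D,\Psi_E$ takes the form $\sigma_\ast\lambda_1$ on $T_h^-$ for some scalar $\sigma_\ast$, with $\ast\in\{\Upsilon,D,E\}$. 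Its restriction to $T_h^+$ is the unique linear function vanishing at $A_1$ whose values at $D,E$ are fixed by continuity (or the prescribed unit jump), so it has the form $\alpha_\ast\mu_D+\gamma_\ast\mu_E$ with $\alpha_\ast,\gamma_\ast$ explicit affine combinations of $\sigma_\ast$ and the prescribed nodal jumps.

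Next I would use the remaining flux condition, which is a single scalar identity on $\Gamma_h\cap T$, to pin down $\sigma_\ast$. A direct computation with $\nabla\lambda_1$ and the positive distance $d=\mathrm{dist}(A_1,\Gamma_h\cap T)$ reduces the flux condition to a scalar equation of the form $\sigma_\ast M=R_\ast$, where $M=M(\beta^\pm,d,\nabla\lambda_1\cdot\textbf{n}_h)$ is the same quantity for all three functions, $R_\Upsilon=1$, and $R_D,R_E$ are explicit geometric remainders of order $1/d$ coming from the unit nodal jumps. The angle condition $\alpha_{max}\le\pi/2$ from Lemma~\ref{lem_unique}, together with $d\le h$ and the shape-regularity of $\mathcal{T}_h$, is precisely what gives a lower bound of the form $M\ge c(\beta^\pm)/d$ uniformly in the interface position, yielding $|\sigma_\Upsilon|\le Cd$ and $|\sigma_D|,|\sigma_E|\le C$ with $C=C(\beta^\pm,\kappa_2)$.

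Once the coefficients are controlled, the estimates follow by direct computation. On $T_h^-$ use $\|\lambda_1\|_{L^\infty}\le 1$, $|\nabla\lambda_1|\le C/h$ and $|T_h^-|\le Ch^2$. On $T_h^+$ use the identity $\|\nabla\mu_i\|_{L^2(T_h^+)}^2=|e_i|^2/(4|T_h^+|)$ for $i=D,E$, where $e_i$ is the edge of $A_1DE$ opposite to vertex $i$: both of these edges have length of order $d$ while $|T_h^+|\sim|DE|\cdot d$, so the quantities $\|\nabla\mu_D\|_{L^2(T_h^+)}$ and $\|\nabla\mu_E\|_{L^2(T_h^+)}$ remain bounded uniformly in the interface position, and similarly $\|\mu_D\|_{L^2(T_h^+)},\|\mu_E\|_{L^2(T_h^+)}\le Ch$. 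Combining with the bounds on $\sigma_\ast,\alpha_\ast,\gamma_\ast$ then gives the stated $L^2$ and $H^1$ estimates; the extra factor $|\sigma_\Upsilon|\le Cd\le Ch$ is exactly what produces the $O(h)$ improvement of the $\Upsilon$ bounds over the $\Psi_i$ bounds.

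The main obstacle is that the individual gradients $\nabla\Upsilon^+$ and $\nabla\Psi_i^+$ blow up like $1/d$ as the interface slides toward the cut-off vertex $A_1$, so naive scaling gives no uniform control. The cancellation of this blow-up comes from two places: the shrinking of $|T_h^+|\sim|DE|\cdot d$ through the barycentric-edge identity above, and the sharp lower bound $M\ge c(\beta^\pm)/d$ ensured by the angle condition, which keeps $\sigma_\ast$ from growing. Making these two cancellations quantitative in a manner independent of the interface location is the technical crux; doing it via the piecewise-barycentric parametrization above, rather than by a scaling argument on a degenerating reference element, is what keeps the constants dependent only on $\beta^\pm$ and the shape-regularity parameter $\kappa_2$.
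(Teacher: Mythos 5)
Your setup (writing each auxiliary function as $\sigma_\ast\lambda_1$ on $T_h^-$ and $\alpha_\ast\mu_D+\gamma_\ast\mu_E$ on the cut-off triangle, and reducing the flux condition to a scalar equation $\sigma_\ast M=R_\ast$ with $|M|\geq c(\beta^\pm)/d$ under the angle condition) is sound, and indeed parallels the non-degeneracy used in the paper's proof of Lemma~\ref{lem_unique}. The gap is in the final assembly step. Your geometric claim that \emph{both} edges of $A_1DE$ opposite to $D$ and $E$ have length of order $d=\mathrm{dist}(A_1,\Gamma_h\cap T)$ is false: take $A_1=(0,0)$, $A_2=(1,0)$, $A_3=(0,1)$, $D=(0.9,0)$, $E=(0,\epsilon)$. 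Then $d\approx\epsilon$ but $|A_1D|=0.9$, and since $\|\nabla\mu_E\|^2_{L^2(T_h^+)}=|A_1D|/(2|A_1E|\sin\angle A_1)\sim h/d$, the quantity $\|\nabla\mu_E\|_{L^2(T_h^+)}$ blows up like $(h/d)^{1/2}$ as the cut becomes nearly parallel to one edge through $A_1$. Since your coefficient bounds give only $|\sigma_D|,|\sigma_E|\leq C$ (so $|\gamma_D|=|\sigma_D\lambda_1(E)|$ is merely $O(1)$), the triangle inequality yields $|\Psi_D|_{H^1(T_h^+)}\lesssim (h/d)^{1/2}$, which is not uniform in the interface location; thus the central $H^1$ estimate for $\Psi_D,\Psi_E$ is not established by the argument as written. (The $L^2$ bounds and the $\Upsilon$ bounds do survive, because for $\Upsilon$ the extra factor $|\sigma_\Upsilon|\leq Cd$ absorbs the $(h/d)^{1/2}$ loss.)

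The estimate is nevertheless true, and there are two ways to repair your route. One is to sharpen the coefficient bound: from $\sigma_D M=R_D$ one actually has $|\sigma_D|\leq Cd\,|\nabla\mu_D\cdot\mathbf{n}_h|$, and the product $d\,|\nabla\mu_D\cdot\mathbf{n}_h|\,\|\nabla\mu_E\|_{L^2(T_h^+)}\leq C(d/|DE|)^{1/2}$ is uniformly bounded; your bound $|R_D|\leq C/d$ discards exactly this cancellation. The other is the paper's route: write $\Psi_D=z-I_{h,T}^{\rm IFE}z$ with $z$ supported on the cut-off triangle, linear, satisfying $\nabla z\cdot\mathbf{n}_h=0$, $z(D)=1$, $z(E)=0$, so that $|\nabla z|=1/|DE|$ and $|z|^2_{H^1(T_h^+)}\leq |T_h^+|/|DE|^2\leq Cd/|DE|\leq C$ via the key geometric inequality $|DE|\geq C\,d$ (the paper's (\ref{le2_gama})), together with a uniform bound $|v(A_{\rm cut})|\leq C$ on the nodal value and the uniform $W^m_\infty$ bounds on the IFE basis functions coming from the explicit formula in Appendix~\ref{pro_lem_unique}. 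In other words, the decomposition must be adapted to the tangential/normal directions of $\Gamma_h$ (or the coefficient smallness must be tracked); the sub-triangle barycentric decomposition with termwise bounds does not control the degenerate, nearly tangential cuts.
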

\begin{proof}:
See  Appendix \ref{pro_lem_jiest}. \qed
\end{proof}
The following lemma provides a relation between  $\mathbb{E}_hv$ and $I_h^{{\rm IFE}}v$ for all $v\in  \widetilde{H}^2(\Omega)$.
\begin{lemma}\label{chazhi_error}
For any $v\in  \widetilde{H}^2(\Omega)$, under the condition of Lemma~\ref{lem_unique}, there exists a constant $C$ independent of $h$ and the interface location relative to the mesh such that
\begin{equation*}
\sum_{T\in\mathcal{T}_h^\Gamma}|\mathbb{E}_hv-I_h^{{\rm IFE}}v|^2_{H^m(T_h^+\cup T_h^-)}\leq Ch^{4-2m}\|v\|^2_{H^2(\Omega^+\cup\Omega^-)},\quad m=0,1.
\end{equation*}
\end{lemma}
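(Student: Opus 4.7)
The plan is to split
\[
\mathbb{E}_h v - I_h^{{\rm IFE}}v = (\mathbb{E}_h v - I_h^{BK}v) + (I_h^{BK}v - I_h^{{\rm IFE}}v)
\]
and estimate the two pieces separately. On $T_h^\pm$ the first piece is exactly the standard nodal $P_1$ interpolation error of the $H^2(\Omega)$ function $\mathbb{E}^\pm v^\pm$. Applying Bramble--Hilbert on the full triangle $T$ (so the subelement geometry is irrelevant) and using Lemma~\ref{lem_ext} gives $\sum_T|\mathbb{E}_hv - I_h^{BK}v|^2_{H^m(T_h^+\cup T_h^-)} \le Ch^{4-2m}\|v\|^2_{H^2(\Omega^+\cup\Omega^-)}$ at once.

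\textbf{Reducing the second piece.} Lemma~\ref{lema_fenjie} provides the explicit expansion
\[
I_h^{BK}v - I_h^{{\rm IFE}}v = [\![I_h^{BK}v]\!](D)\,\Psi_D + [\![I_h^{BK}v]\!](E)\,\Psi_E + [\![\beta\nabla(I_h^{BK}v)\cdot\textbf{n}]\!](x^*)\,\Upsilon.
\]
Combining the size bounds $\|\Psi_i\|^2_{H^m(T_h^+\cup T_h^-)}\le Ch^{2-2m}$ and $\|\Upsilon\|^2_{H^m(T_h^+\cup T_h^-)}\le Ch^{4-2m}$ from Lemma~\ref{lem_jiest} with a triangle inequality reduces the task to showing
\[
\sum_{T\in\mathcal{T}_h^\Gamma}\!\bigl(|[\![I_h^{BK}v]\!](D)|^2 + |[\![I_h^{BK}v]\!](E)|^2\bigr) \le Ch^2\|v\|^2_{H^2(\Omega^+\cup\Omega^-)}
\]
and
\[
\sum_{T\in\mathcal{T}_h^\Gamma}|[\![\beta\nabla(I_h^{BK}v)\cdot\textbf{n}]\!](x^*)|^2 \le C\|v\|^2_{H^2(\Omega^+\cup\Omega^-)}.
\]

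\textbf{Bounding the coefficients.} For the point-value jumps, since $D\in\Gamma$ and $v$ is continuous across $\Gamma$, the extensions satisfy $\mathbb{E}^+v^+(D)=\mathbb{E}^-v^-(D)$, so $[\![I_h^{BK}v]\!](D) = (I_h-I)\mathbb{E}^+v^+(D) - (I_h-I)\mathbb{E}^-v^-(D)$, and the 2D nodal interpolation bound $\|w-I_hw\|_{L^\infty(T)}\le Ch|w|_{H^2(T)}$ (Bramble--Hilbert plus Sobolev embedding $H^2\hookrightarrow C^0$) gives the required estimate on each element; the same argument works at $E$, and the summation uses Lemma~\ref{lem_ext}. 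For the flux jump, the plan is to exploit that $\nabla I_h\mathbb{E}^\pm v^\pm$ is constant on $T$, so $[\![\beta\nabla(I_h^{BK}v)\cdot\textbf{n}]\!](x^*)$ equals its own average on $T$. Writing this average as $\tfrac{1}{|T|}\int_T(\cdot)\,dx$, adding and subtracting $\beta^\pm\nabla\mathbb{E}^\pm v^\pm$, and then combining (i) the trace identity $\beta^+\nabla v^+\cdot\textbf{n}(y) = \beta^-\nabla v^-\cdot\textbf{n}(y)$ a.e.\ on $\Gamma\cap T$, (ii) $|\textbf{n}(x^*)-\textbf{n}(y)|\le Ch$ from (\ref{nt_smooth})--(\ref{error_nt}), and (iii) $\|\nabla(I_h-I)\mathbb{E}^\pm v^\pm\|_{L^2(T)}\le Ch|\mathbb{E}^\pm v^\pm|_{H^2(T)}$ produces the element-wise bound.

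\textbf{Main obstacle.} The hard part is keeping the flux-jump estimate robust with respect to the interface location, i.e.\ independent of how small $T_h^\pm$ or $\Gamma\cap T$ might be. The crucial structural fact is that the gradients $\nabla I_h\mathbb{E}^\pm v^\pm$ are constant vectors on the whole triangle $T$, so every integration in the argument can be performed over a set of size $|T|\sim h^2$ rather than over a possibly tiny subelement or a short interface arc; this is what prevents the interface-location-dependent factors from ever appearing in the bound. Together with the Rolle-type identification $\textbf{n}(x^*)=\textbf{n}_h(x^*)$ in (\ref{rolle}), which converts pointwise normal-direction comparisons into quantities controlled by $h$ via (\ref{error_nt}), this closes the estimate.
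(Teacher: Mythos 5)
Your decomposition, use of Lemma~\ref{lema_fenjie} and Lemma~\ref{lem_jiest}, the standard estimate for $\mathbb{E}_hv-I_h^{BK}v$, and the treatment of the nodal jumps at $D$ and $E$ all coincide with the paper's proof, and the reduction to $\sum_T[\![I_h^{BK}v]\!]^2(x_p)\leq Ch^2\|v\|^2_{H^2(\Omega^+\cup\Omega^-)}$ and $\sum_T[\![\beta\nabla(I_h^{BK}v)\cdot\textbf{n}]\!]^2(x^*)\leq C\|v\|^2_{H^2(\Omega^+\cup\Omega^-)}$ is the right target. Replacing the value at $x^*$ by the average over $T$ (because the interpolant gradients are constant) is also equivalent to the paper's inverse-inequality step.

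The gap is in the last coefficient. After adding and subtracting $\beta^\pm\nabla\mathbb{E}^\pm v^\pm$, you are left with the term $\frac{1}{|T|}\int_T[\![\beta\nabla v\cdot\textbf{n}]\!]\,dx$, and you claim an \emph{element-wise} bound using only the fact that the exact flux jump vanishes (in the trace sense) on $\Gamma\cap T$, together with $|\textbf{n}(x^*)-\textbf{n}(y)|\leq Ch$. That is not enough to obtain a constant independent of the interface location: $[\![\beta\nabla v\cdot\textbf{n}]\!]$ is only an $H^1(T)$ function, and a Poincar\'e-type inequality for functions vanishing on $\Gamma\cap T$ degenerates when the arc $\Gamma\cap T$ is arbitrarily short (e.g.\ when $\Gamma$ barely clips a corner of $T$, the constant blows up logarithmically with the arc length), which is precisely the interface-location dependence you claim to have avoided; the constancy of $\nabla I_h\mathbb{E}^\pm v^\pm$ does not touch this term at all, since it involves the exact extensions. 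The paper closes exactly this point differently: it does \emph{not} estimate element by element, but bounds the residual by $Ch^{-2}\|[\![\beta\nabla v\cdot\textbf{n}]\!]\|^2_{L^2(T)}$, sums over all interface elements, and then invokes the $\delta$-strip Poincar\'e inequality of Lemma~\ref{strip} together with the fact that $[\![\beta\nabla v\cdot\textbf{n}]\!]$ belongs to $H^1(N(\Gamma,h))$ and vanishes on the whole interface $\Gamma$, giving $\sum_T\|[\![\beta\nabla v\cdot\textbf{n}]\!]\|^2_{L^2(T)}\leq Ch^2\|v\|^2_{H^2(\Omega^+\cup\Omega^-)}$, which exactly cancels the $h^{-2}$. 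To repair your argument you should do the same (or an equivalent nearest-point-on-$\Gamma$ path argument with a finite-overlap count), i.e.\ exploit the vanishing of the flux jump on all of $\Gamma$ through the strip estimate rather than its vanishing on the possibly tiny set $\Gamma\cap T$.
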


\begin{proof}:
For each interface element $T\in\mathcal{T}_h^\Gamma$, by  the triangle inequality, we have
\begin{equation}\label{pro_main1}
|\mathbb{E}_hv-I_h^{{\rm IFE}}v|_{H^m(T_h^+\cup T_h^-)}\leq |\mathbb{E}_hv-I_h^{BK}v|_{H^m(T_h^+\cup T_h^-)}+|I_h^{BK}v-I_h^{{\rm IFE}}v|_{H^m(T_h^+\cup T_h^-)}.
\end{equation}
From (\ref{epm}) and (\ref{ehpm}), the estimate of the first term is the standard
\begin{equation}\label{main_prof12}
\begin{aligned}
|\mathbb{E}_hv-I_h^{BK}v|^2_{H^m(T_h^+\cup T_h^-)}&=(|\mathbb{E}^+v^+-I_h\mathbb{E}^+v^+|^2_{H^m(T_h^+)}+|\mathbb{E}^-v^--I_h\mathbb{E}^-v^-|^2_{H^m(T_h^-)})\\
&\leq Ch^{4-2m}(|\mathbb{E}^+v^+|^2_{H^2(T)}+|\mathbb{E}^-v^-|^2_{H^2(T)}).\\
\end{aligned}
\end{equation}
For the second term on the right-hand side of (\ref{pro_main1}),  from Lemma~\ref{lema_fenjie} and Lemma~\ref{lem_jiest},  we have
\begin{equation}\label{main_prof1}
\begin{aligned}
|I_h^{BK}v-I_h^{{\rm IFE}}&v|^2_{H^m(T_h^+\cup T_h^-)}\leq 3\sum_{x_p=D,E}[\![ I_h^{BK}v]\!]^2(x_p)|\Psi_{x_p}|^2_{H^m(T_h^+\cup T_h^-)}\\
&\qquad\qquad\qquad+3[\![\beta \nabla (I_h^{BK}v)\cdot \textbf{n}]\!]^2(x^*)|\Upsilon(x)|^2_{H^m(T)}\\
&\leq Ch^{2-2m}\sum_{x_p=D,E}[\![ I_h^{BK}v]\!]^2(x_p)+Ch^{4-2m}[\![\beta \nabla (I_h^{BK}v)\cdot \textbf{n}]\!]^2(x^*).
\end{aligned}
\end{equation}
Since $v\in \widetilde{H}^2(\Omega)$, we have  $[\![v]\!]^2(D)=[\![v]\!]^2(E)=0$, which leads to
\begin{equation}\label{pro_main3}
\begin{aligned}
\sum_{x_p=D,E}[\![ I_h^{BK}v]\!]^2(x_p)=&\sum_{x_p=D,E}[\![ I_h^{BK}v-v]\!]^2(x_p)\leq C\|[\![I_h^{BK}v-v]\!]\|_{L^\infty(T)}^2\\
&\leq C\|I_h\mathbb{E}^+v^+-\mathbb{E}^+v^+\|^2_{L^\infty(T)}+C\|I_h\mathbb{E}^-v^--\mathbb{E}^-v^-\|^2_{L^\infty(T)}\\
&\leq Ch^2(|\mathbb{E}^+v^+|^2_{H^2(T)}+|\mathbb{E}^-v^-|^2_{H^2(T)}),
\end{aligned}
\end{equation}
where we have used the standard interpolation error estimate in the last inequality, see Theorem 4.4.20 in \cite{brenner2008mathematical}.

The remaining term $[\![\beta \nabla (I_h^{BK}v)\cdot \textbf{n}]\!](x^*)$ in (\ref{main_prof1}) cannot be treated as (\ref{pro_main3})  because $\nabla u \cdot\textbf{n}$ is not well-defined at the point $x^*$ when $u\in \widetilde{H}^2(\Omega)$. Using the standard inverse inequality,  (\ref{error_nt}) and the relation $\textbf{n}(x^*)=\textbf{n}_h$, we can derive
\begin{equation}\label{pro_main4}
\begin{aligned}
|[\![\beta \nabla& (I_h^{BK}v)\cdot \textbf{n}]\!]^2(x^*)= \|[\![\beta \nabla (I_h^{BK}v)\cdot \textbf{n}_h]\!] \|_{L^\infty(T)}^2 \leq Ch^{-2} \|[\![\beta \nabla (I_h^{BK}v)\cdot \textbf{n}_h]\!] \|_{L^2(T)}^2\\
&\leq Ch^{-2}\left(\|[\![\beta \nabla (I_h^{BK}v)\cdot \textbf{n}_h-\beta \nabla v\cdot \textbf{n}_h]\!]\|_{L^2(T)}^2+\left\|[\![\beta \nabla v\cdot (\textbf{n}+\textbf{n}_h-\textbf{n})]\!]\right\|_{L^2(T)}^2\right)\\
&\leq C\sum_{i=\pm}|\mathbb{E}^iv^i|^2_{H^2(T)} +Ch^{-2}\left(\left\|[\![\beta \nabla v\cdot \textbf{n}]\!]\right\|_{L^2(T)}^2+\| \textbf{n}_h-\textbf{n}\|^2_{L^\infty(T)}\left\|[\![\beta \nabla v]\!]\right\|_{L^2(T)}^2\right)\\
&\leq C\sum_{i=\pm}\left(|\mathbb{E}^iv^i|^2_{H^2(T)}+|\mathbb{E}^iv^i|^2_{H^1(T)}\right) +Ch^{-2}\left\|[\![\beta \nabla v\cdot \textbf{n}]\!]\right\|_{L^2(T)}^2,
\end{aligned}
\end{equation}
where $[\![\beta \nabla v\cdot \textbf{n}]\!]$ represents the jump of the flux of $\mathbb{E}^\pm v^\pm$ (see (\ref{shuangfang1}) for the definition of the notation $[\![\cdot ]\!]$).
We combine (\ref{pro_main1})-(\ref{pro_main4}) to  obtain the error estimate on the interface element
\begin{equation*}
|\mathbb{E}_hv-I_h^{{\rm IFE}}v|^2_{H^m(T_h^+\cup T_h^-)}\leq Ch^{4-2m}\sum_{i=\pm}\|\mathbb{E}^iv^i\|^2_{H^2(T)}+Ch^{2-2m}\left\|[\![\beta \nabla v\cdot \textbf{n}]\!]\right\|_{L^2(T)}^2.
\end{equation*}
Summing up and using Lemma~\ref{lem_ext}, we get
\begin{equation}\label{pro_main_ls}
\begin{aligned}
\sum_{T\in\mathcal{T}^\Gamma_h}|\mathbb{E}_hv-I_h^{{\rm IFE}}v|^2_{W^m_2(T_h^+\cup T_h^-)}\leq Ch^{4-2m}\|v\|^2_{H^2(\Omega^+\cup\Omega^-)}+Ch^{2-2m}\sum_{T\in\mathcal{T}_h^\Gamma}\left\|[\![\beta \nabla v\cdot \textbf{n}]\!]\right\|_{L^2(T)}^2.
\end{aligned}
\end{equation}
Since $v\in \widetilde{H}^2(\Omega)$, from the definition (\ref{def_H2}) we know that $[\![\beta \nabla v\cdot \textbf{n}]\!]=0$ on $\Gamma$. Thus, by Lemma~\ref{strip} and the fact $\textbf{n}(x)\in \left(C^1\left(N(\Gamma,\delta_0)\right)\right)^2$, we have
\begin{equation}\label{pro_main_new}
\begin{aligned}
\sum_{T\in\mathcal{T}_h^\Gamma}\left\|[\![\beta \nabla v\cdot \textbf{n}]\!]\right\|_{L^2(T)}^2&\leq \left\|[\![\beta \nabla v\cdot \textbf{n}]\!]\right\|_{L^2(N(\Gamma,h))}^2\leq Ch^2\left|[\![\beta \nabla v\cdot \textbf{n}]\!]\right|_{H^1(N(\Gamma,h))}^2\\
&\leq Ch^2(\|\mathbb{E}^+v^+\|^2_{H^2(N(\Gamma,h))}+\|\mathbb{E}^-v^-\|^2_{H^2(N(\Gamma,h))})\leq Ch^2\|v\|^2_{H^2(\Omega^+\cup\Omega^-)}.
\end{aligned}
\end{equation}
Finally, substituting this into (\ref{pro_main_ls}) we complete the proof of the lemma.\qed
\end{proof}


Now we are ready to prove the optimal approximation properties of the linear IFE space.
\begin{theorem}\label{theorem_interpolation}
For any $v\in  \widetilde{H}^2(\Omega)$, under the condition of Lemma~\ref{lem_unique}, there exists a constant $C$  independent of $h$ and the interface location relative to the mesh such that
\begin{equation}\label{lem_main}
\|v-I_h^{{\rm IFE}}v\|_{L^2(\Omega)}+h\left(\sum_{T\in\mathcal{T}_h}|v-I_h^{{\rm IFE}}v|^2_{H^1(T)}\right)^{1/2}\leq Ch^2\|v\|_{H^2(\Omega^+\cup\Omega^-)}.
\end{equation}
\end{theorem}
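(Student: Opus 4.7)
The plan is to bound $v - I_h^{{\rm IFE}}v$ separately on non-interface and interface elements. On each non-interface element $T\in\mathcal{T}_h^{non}$, we have $v|_T\in H^2(T)$ and $I_h^{{\rm IFE}}v|_T = I_hv|_T$ is the classical linear nodal interpolant, so the standard Bramble--Hilbert argument gives
$$\|v - I_h^{{\rm IFE}}v\|_{L^2(T)} + h\,|v - I_h^{{\rm IFE}}v|_{H^1(T)} \leq Ch^2|v|_{H^2(T)}.$$
Summing over $\mathcal{T}_h^{non}$ and using that each such element lies entirely inside $\Omega^+$ or $\Omega^-$ contributes $Ch^2\|v\|_{H^2(\Omega^+\cup\Omega^-)}$.

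On an interface element $T\in\mathcal{T}_h^\Gamma$, I would split
$$v - I_h^{{\rm IFE}}v = (v - \mathbb{E}_h v) + (\mathbb{E}_h v - I_h^{{\rm IFE}}v),$$
with $\mathbb{E}_h v$ given by (\ref{epm}). A direct check on the four sub-regions $T^\pm\cap T_h^\pm$ shows that $v - \mathbb{E}_h v$ vanishes off $T^\triangle$ and equals $\pm[\![v]\!]$ on $T^\triangle$; since $v - I_h^{{\rm IFE}}v\in H^1(T)$, the analogous pointwise identity for the gradient gives
$$|\nabla(v - I_h^{{\rm IFE}}v)|^2 \leq 2|\nabla(\mathbb{E}_h v - I_h^{{\rm IFE}}v)|^2 + 2\chi_{T^\triangle}|\nabla[\![v]\!]|^2\quad\text{a.e.~on } T.$$
Integrating, summing over $\mathcal{T}_h^\Gamma$, and invoking Lemma~\ref{chazhi_error} controls the bulk term $\mathbb{E}_h v - I_h^{{\rm IFE}}v$ by $Ch^{4-2m}\|v\|^2_{H^2(\Omega^+\cup\Omega^-)}$ for $m=0,1$, leaving only the contribution from the thin mismatch strip $T^\triangle$.

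To bound the $T^\triangle$ contribution I would exploit that $[v]_\Gamma = 0$, so $[\![v]\!]\equiv 0$ on $\Gamma$. Applying Lemma~\ref{lem_h3} with $w = [\![v]\!]$ kills the boundary term and gives
$$\|[\![v]\!]\|^2_{L^2(T^\triangle)} \leq Ch^4\bigl(|\mathbb{E}^+v^+|^2_{H^1(T)} + |\mathbb{E}^-v^-|^2_{H^1(T)}\bigr),$$
which sums to $Ch^4\|v\|^2_{H^2(\Omega^+\cup\Omega^-)}$ by Lemma~\ref{lem_ext}. For the gradient, Lemma~\ref{lem_h3} applied componentwise to $w = \nabla[\![v]\!]\in (H^1(T))^2$ yields
$$\|\nabla[\![v]\!]\|^2_{L^2(T^\triangle)} \leq Ch^2\|\nabla[\![v]\!]\|^2_{L^2(\Gamma\cap T)} + Ch^4\|\nabla^2[\![v]\!]\|^2_{L^2(T^\triangle)}.$$
The second term sums to $Ch^4\|v\|^2_{H^2(\Omega^+\cup\Omega^-)}$ directly, while the first assembles to $Ch^2\|\nabla[\![v]\!]\|^2_{L^2(\Gamma)}$ and is dispatched by the standard $H^1(\Omega)\hookrightarrow L^2(\Gamma)$ trace inequality applied to $\nabla\mathbb{E}^\pm v^\pm$, again combined with Lemma~\ref{lem_ext}, delivering $Ch^2\|v\|^2_{H^2(\Omega^+\cup\Omega^-)}$.

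Collecting the non-interface, bulk interface and mismatch-strip bounds, and remembering that the statement multiplies the $H^1$ term by $h$, yields (\ref{lem_main}). The conceptual heart of the proof is Lemma~\ref{chazhi_error}, already established via the auxiliary-function decomposition; what is left is purely geometric, namely taming the $O(h^2)$-wide strip $T^\triangle$. The step most at risk of subtlety is the $H^1$ bound on $T^\triangle$: because $\nabla[\![v]\!]$ does not vanish on $\Gamma$, the zero-trace half of Lemma~\ref{strip} is not available, and one must reassemble the local $L^2(\Gamma\cap T)$ contributions into a single global $L^2(\Gamma)$ quantity before invoking the trace inequality. This is the one place where the $C^2$ regularity of $\Gamma$ and the $H^2$-stable extension of Lemma~\ref{lem_ext} must be used in tandem.
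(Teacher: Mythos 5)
Your proposal is correct and follows essentially the same route as the paper: the same splitting $v-I_h^{\rm IFE}v=(v-\mathbb{E}_hv)+(\mathbb{E}_hv-I_h^{\rm IFE}v)$ on interface elements, with the bulk term handled by Lemma~\ref{chazhi_error} and the mismatch strip $T^\triangle$ handled by Lemma~\ref{lem_h3} (boundary term vanishing for $[\![v]\!]$ in the $L^2$ bound, retained for $\nabla[\![v]\!]$ in the $H^1$ bound and then absorbed via the global trace inequality on $\Gamma$ together with Lemma~\ref{lem_ext}). This mirrors the paper's estimates (\ref{pro_lemmain2})--(\ref{pro_lemmain9}), so no further comment is needed.
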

\begin{proof}:
On each non-interface element $T\in\mathcal{T}_h^{non}$, we have the standard  estimate
\begin{equation}\label{pro_lemmain1}
\|v-I_h^{{\rm IFE}}v\|_{L^2(T)}+h|v-I_h^{{\rm IFE}}v|_{H^1(T)}\leq Ch^2\|v\|_{H^2(T)}.
\end{equation}

On each interface element $T\in\mathcal{T}_h^\Gamma$, by the triangle inequality, we have
\begin{equation}\label{pro_lemmain2}
|v-I_h^{{\rm IFE}}v|^2_{H^m(T)}\leq 2|\mathbb{E}_hv-I_h^{{\rm IFE}}v|^2_{H^m(T_h^+\cup T_h^-)}+2|v-\mathbb{E}_hv|^2_{H^m(T_h^+\cup T_h^-)}, ~~m=1,2.
\end{equation}
The first term on the right hand-side can be estimated by Lemma~\ref{chazhi_error}.

Next, we try to estimate the second term on the right-hand side of (\ref{pro_lemmain2}).
From (\ref{epm}), we know that $\mathbb{E}_hv=\mathbb{E}^+ v^+$ on $ T_h^+$, which together with the fact $ \mathbb{E}^+ v^+=v^+=v$ on $T_h^+\cap T^+$  implies
\begin{equation}\label{pro_lemmain3}
|\mathbb{E}_hv-v|^2_{H^m(T_h^+)}=|\mathbb{E}^+ v^+-v|^2_{H^m(T_h^+)}=|\mathbb{E}^+ v^+-v^-|^2_{H^m(T_h^+ \backslash T^+)},~m=1,2.
\end{equation}
Since $T_h^+ \backslash T^+ \subset T^\triangle$, $T_h^+ \backslash T^+ \subset \Omega^-$ and $\mathbb{E}^+ v^+=\mathbb{E}^-v^-$ on $\Gamma$, it follows from Lemma~\ref{lem_h3} that
\begin{equation}\label{pro_lemmain4}
\begin{aligned}
&\|\mathbb{E}^+ v^+-v^-\|^2_{L^2(T_h^+\backslash T^+)}\leq Ch^4|\mathbb{E}^+ v^+-\mathbb{E}^-v^-|^2_{H^1(T^\triangle)}\leq Ch^4\sum_{i=\pm}|\mathbb{E}^i v^i|^2_{H^1(T)},\\
&\|\nabla (\mathbb{E}^+ v-v^-)\|^2_{L^2(T_h^+ \backslash T^+ )}\leq C\left(h^2\|\nabla (v^+-v^-)\|^2_{L^2(\Gamma\cap T)}+h^4|\mathbb{E}^+ v-\mathbb{E}^-v^-|^2_{H^2(T^\triangle)}\right)\\
&~\quad\qquad\qquad\qquad\qquad\quad\quad\leq Ch^2\sum_{i=\pm}\|\nabla  v^i\|^2_{L^2(\Gamma\cap T)}+Ch^4\sum_{i=\pm}|\mathbb{E}^i v^i|^2_{H^2(T)}.
\end{aligned}
\end{equation}
From (\ref{pro_lemmain3}) and (\ref{pro_lemmain4}), we get, for $m=0,1$,
\begin{equation}\label{pro_lemmain5}
|v-\mathbb{E}_hv|^2_{H^m(T_h^+)}\leq Ch^{4-2m}\sum_{i=\pm}\|\nabla  v^i\|^2_{L^2(\Gamma\cap T)}+Ch^4\sum_{i=\pm}\|\mathbb{E}^i v^i\|^2_{H^2(T)}.
\end{equation}
Analogously, we have the following result on $T_h^-$,
\begin{equation}\label{pro_lemmain6}
|v-\mathbb{E}_hv|^2_{H^m(T_h^-)}\leq Ch^{4-2m}\sum_{i=\pm}\|\nabla  v^i\|^2_{L^2(\Gamma\cap T)}+Ch^4\sum_{i=\pm}\|\mathbb{E}^i v^i\|^2_{H^2(T)}.
\end{equation}
Combining (\ref{pro_lemmain1}), (\ref{pro_lemmain2}), (\ref{pro_lemmain5}), (\ref{pro_lemmain6}) and Lemma~\ref{chazhi_error}, we have
\begin{equation}\label{pro_lemmain7}
\begin{aligned}
\sum_{T\in\mathcal{T}_h}|v-I_h^{{\rm IFE}}v|^2_{H^m(T)}&\leq  Ch^{4-2m}\left(\|v\|^2_{H^2(\Omega^+\cup\Omega^-)}+\sum_{i=\pm} \|\nabla v^i\|^2_{L^2(\Gamma)}\right)+Ch^4\sum_{i=\pm}\|\mathbb{E}^i v^i\|^2_{H^2(\Omega)},
\end{aligned}
\end{equation}
which together with the global trace inequality on $\Omega^\pm$
\begin{equation}\label{pro_lemmain8}
\sum_{i=\pm} \|\nabla v^i\|^2_{L^2(\Gamma)}\leq C(\|v\|^2_{H^2(\Omega^-)}+\|v\|^2_{H^2(\Omega^+)})
\end{equation}
and the continuity of the extension (see Lemma~\ref{lem_ext})
\begin{equation}\label{pro_lemmain9}
\|\mathbb{E}^+ v^+\|^2_{H^2(\Omega)}+\|\mathbb{E}^- v^-\|^2_{H^2(\Omega)}\leq C(\|v\|^2_{H^2(\Omega^+)}+\|v\|^2_{H^2(\Omega^-)})
\end{equation}
implies  the estimate (\ref{lem_main}).
\qed \end{proof}

\subsection{The trace inequality for the space $\widetilde{H}^2(T)+ S_h(T)$}\label{subsec_tra}
 Assume $v\in \widetilde{H}^2(T)$ and $w_h\in S_h(T)$, the standard trace inequality cannot be applied to $\nabla (v-w_h)$ because $v-w_h\not \in H^2(T)$.
To establish the trace inequality for the broken space, we first need the following trace lemma for the space $\widetilde{H}^2(T)$.
\begin{lemma}\label{lem_tra2}
Assume  $v\in \widetilde{H}^2(T)$, there exists a constant $C$ independent of $h$ and the interface location relative to the mesh such that
\begin{equation*}
\|\nabla v\|_{L^2(\partial T)}\leq C(h^{-\frac{1}{2}}\|\nabla v\|_{L^2(T)}+h^{\frac{1}{2}}|v|_{H^2(T^+\cup T^-)}).
\end{equation*}
\end{lemma}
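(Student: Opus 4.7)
The plan is to reduce the broken-space trace estimate to a standard $H^1$-trace inequality via a tangential--normal decomposition of $\nabla v$. Using the signed distance function the unit normal $\mathbf{n}$ and tangent $\mathbf{t}$ to $\Gamma$ extend to $C^1(\overline{T})$ vector fields on each interface element (see (\ref{nt_smooth})), so I can form the two scalar functions
\[
\psi(x) := \nabla v(x)\cdot\mathbf{t}(x), \qquad \chi(x) := \beta(x)\,\nabla v(x)\cdot\mathbf{n}(x),
\]
defined piecewise from $v^\pm$ on $T^\pm$. Because $\{\mathbf{t},\mathbf{n}\}$ is an orthonormal frame and $\beta(x)\ge\min(\beta^+,\beta^-)>0$, one has pointwise $|\nabla v|^2 = \psi^2+\beta^{-2}\chi^2$, and therefore
\[
\|\nabla v\|_{L^2(\partial T)}^2 \;\le\; \|\psi\|_{L^2(\partial T)}^2 + C\,\|\chi\|_{L^2(\partial T)}^2.
\]

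The key step is to observe that the jump conditions built into $\widetilde{H}^2(T)$ upgrade $\psi,\chi$ from piecewise $H^1$ functions to genuine $H^1(T)$ functions. Indeed, $[v]_{\Gamma\cap T}=0$ together with $v^\pm\in H^2(T^\pm)$ implies that the tangential derivatives of the traces agree on $\Gamma\cap T$ (in $H^{1/2}$), so $\psi|_{T^+}$ and $\psi|_{T^-}$ share the same trace on the interface and $\psi\in H^1(T)$. Likewise $[\beta\nabla v\cdot\mathbf{n}]_{\Gamma\cap T}=0$ gives directly $\chi\in H^1(T)$. Hence the standard scaled trace inequality on the single shape-regular triangle $T$ may be applied to each:
\[
\|\psi\|_{L^2(\partial T)}^2 \le C\bigl(h^{-1}\|\psi\|_{L^2(T)}^2 + h\,|\psi|_{H^1(T)}^2\bigr), \qquad \|\chi\|_{L^2(\partial T)}^2 \le C\bigl(h^{-1}\|\chi\|_{L^2(T)}^2 + h\,|\chi|_{H^1(T)}^2\bigr).
\]

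To close the estimate I bound $\|\psi\|_{L^2(T)}^2+\|\chi\|_{L^2(T)}^2 \le C\|\nabla v\|_{L^2(T)}^2$ using $|\mathbf{t}|=|\mathbf{n}|=1$ and boundedness of $\beta$, and, applying the product rule separately on $T^+$ and $T^-$ together with the uniform bound $\|\nabla\mathbf{t}\|_{L^\infty(T)}+\|\nabla\mathbf{n}\|_{L^\infty(T)}\le C$ coming from (\ref{nt_smooth}),
\[
|\psi|_{H^1(T)}^2 + |\chi|_{H^1(T)}^2 \le C\bigl(|v|_{H^2(T^+\cup T^-)}^2 + \|\nabla v\|_{L^2(T)}^2\bigr).
\]
Combining these estimates and absorbing $h\|\nabla v\|_{L^2(T)}^2$ into $h^{-1}\|\nabla v\|_{L^2(T)}^2$ (which is permitted under Assumption B since $h<\delta_0$ and so $h\le h^{-1}$ can be arranged by a suitable choice of $\delta_0$) yields the stated inequality after taking square roots.

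The main obstacle is the verification that $\psi,\chi\in H^1(T)$ despite $v\notin H^2(T)$; this requires correctly interpreting the interface conditions as equalities of $H^{1/2}(\Gamma)$ (tangential) and $H^{-1/2}(\Gamma)$ (normal flux) traces, so that no singular measures are generated across $\Gamma$ by the piecewise definitions. Once this is clear, the remainder is the usual shape-regular trace inequality combined with routine product-rule bookkeeping.
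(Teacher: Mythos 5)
Your proposal is correct and follows essentially the same route as the paper's proof: decompose $\nabla v$ in the $C^1$ frame $\{\mathbf{t},\mathbf{n}\}$ from (\ref{nt_smooth}), use the jump conditions in $\widetilde{H}^2(T)$ to see that $\nabla v\cdot\mathbf{t}$ and $\beta\nabla v\cdot\mathbf{n}$ lie in $H^1(T)$ as in (\ref{lem1_p1}), apply the standard scaled trace inequality to each, and control the resulting seminorms by $|v|_{H^2(T^+\cup T^-)}$ plus a lower-order $\|\nabla v\|_{L^2(T)}$ term that is absorbed since $h$ is bounded. Your extra care in justifying the trace matching across $\Gamma\cap T$ and the product-rule bookkeeping only makes explicit what the paper states tersely.
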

\begin{proof}:
Since $v\in \widetilde H^2(T)$, we have
$\nabla v^\pm\in (H^1(T^\pm))^2$ with $T^\pm=\Omega^\pm\cap T.$
By (\ref{nt_smooth}), we obtain
\begin{equation*}
\nabla v^\pm\cdot \textbf{t}(x)\in H^1(T^\pm)~\mbox{ and }~\beta^\pm\nabla v^\pm\cdot\textbf{n}(x)\in H^1(T^\pm).
\end{equation*}
Using the condition $[v]_{\Gamma\cap T}=[\beta\nabla v\cdot\textbf{n}]_{\Gamma\cap T}=0$ in the definition  (\ref{def_H2}), we have
\begin{equation}\label{lem1_p1}
\nabla v\cdot \textbf{t}(x)\in H^1(T)~\mbox{ and }~\beta(x)\nabla v\cdot\textbf{n}(x)\in H^1(T).
\end{equation}
By the standard trace inequality, we get
\begin{equation*}
\begin{aligned}
&\|\nabla v\cdot \textbf{t}\|_{L^2(\partial T)}\leq C(h^{-\frac{1}{2}}\|\nabla v\cdot \textbf{t}\|_{L^2(T)}+h^{\frac{1}{2}}|\nabla v\cdot \textbf{t}|_{H^1(T)}),\\
&\|\beta\nabla v\cdot \textbf{n}\|_{L^2(\partial T)}\leq C(h^{-\frac{1}{2}}\|\beta\nabla v\cdot \textbf{n}\|_{L^2(T)}+h^{\frac{1}{2}}|\beta\nabla v\cdot \textbf{n}|_{H^1(T)}).
\end{aligned}
\end{equation*}
Hence, we finally conclude
\begin{equation*}
\begin{aligned}
\|\nabla v&\|_{L^2(\partial T)}\leq C(\|\nabla v\cdot \textbf{t}\|_{L^2(\partial T)}+ \|\beta\nabla v\cdot \textbf{n}\|_{L^2(\partial T)})\\
&\leq Ch^{-\frac{1}{2}}(\|\nabla v\cdot \textbf{t}\|_{L^2(T)}+\|\beta\nabla v\cdot \textbf{n}\|_{L^2(T)})+Ch^{\frac{1}{2}}(|\nabla v\cdot \textbf{t}|_{H^1(T)}+|\beta\nabla v\cdot \textbf{n}|_{H^1(T)})\\
&\leq C(h^{-\frac{1}{2}}\|\nabla v\|_{L^2(T)}+h^{\frac{1}{2}}|v|_{H^2(T^+\cup T^-)}).
\end{aligned}
\end{equation*}
\qed \end{proof}

The following trace inequality for the spaces $\widetilde{H}^2(T)+ S_h(T)$ is important in the convergence proof.
\begin{lemma}\label{lema_trace}
Let $T\in\mathcal{T}_h^\Gamma$ be an interface element and $S_h(T)$ be  the linear IFE shape function space.  For any $v\in \widetilde{H}^2(T)$ and any $w_h\in S_h(T)$,  there exists a constant $C$ independent of $h$ and the interface location relative to the mesh such that
\begin{equation}\label{le1}
\|\nabla (v-w_h)\|_{L^2(\partial T)}\leq Ch^{-\frac{1}{2}}\|\nabla(v-w_h)\|_{L^2(T)}+Ch^{\frac{1}{2}}(|v|_{H^2(T^+\cup T^-)}+|v|_{H^1(T)}).
\end{equation}
\end{lemma}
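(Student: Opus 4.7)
The plan is to decompose $\nabla(v-w_h)$ along the \emph{approximate} tangent/normal frame $(\textbf{t}_h,\textbf{n}_h)$ associated with $\Gamma_h$, so that
\begin{equation*}
\|\nabla(v-w_h)\|^2_{L^2(\partial T)} = \|\nabla(v-w_h)\cdot \textbf{t}_h\|^2_{L^2(\partial T)}+ \|\nabla(v-w_h)\cdot \textbf{n}_h\|^2_{L^2(\partial T)}.
\end{equation*}
This reduces the vector-valued problem to two scalar ones and makes accessible the key algebraic feature of $S_h(T)$: by the defining property (\ref{conti_linear}), the scalars $\alpha:=\nabla w_h\cdot \textbf{t}_h$ and $\gamma:=\beta_h\nabla w_h\cdot \textbf{n}_h$ are \emph{constant} on $T$ and hence trivially in $H^1(T)$, while the corresponding quantities $\nabla v\cdot \textbf{t}$ and $\beta\nabla v\cdot \textbf{n}$ for $v$ lie in $H^1(T)$ by (\ref{lem1_p1}).

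For the tangential piece I would write $\nabla(v-w_h)\cdot \textbf{t}_h = [\nabla v\cdot \textbf{t}-\alpha] + \nabla v\cdot(\textbf{t}_h-\textbf{t})$. The bracket is an $H^1(T)$ function minus a constant, so the standard trace inequality applies; to turn its $L^2(T)$ term into $\|\nabla(v-w_h)\|_{L^2(T)}$, I would further rewrite $\nabla v\cdot \textbf{t}-\alpha = \nabla(v-w_h)\cdot \textbf{t} + \nabla w_h\cdot(\textbf{t}-\textbf{t}_h)$ and use $\|\textbf{t}-\textbf{t}_h\|_{L^\infty(T)}\le Ch$ from (\ref{error_nt}) together with $\|\nabla w_h\|_{L^2(T)}\le |v|_{H^1(T)}+\|\nabla(v-w_h)\|_{L^2(T)}$; the factor $Ch$ here is crucial because, after the $h^{-1/2}$ prefactor from the trace inequality, it produces exactly the claimed $h^{1/2}|v|_{H^1(T)}$. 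The correction $\nabla v\cdot(\textbf{t}_h-\textbf{t})$ is pointwise $O(h)|\nabla v|$ and is handled on $\partial T$ by Lemma~\ref{lem_tra2} applied to $v$.

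The normal piece follows the same template but with the $\beta_h$-weight: using $\beta\nabla v\cdot \textbf{n}\in H^1(T)$, I would write
\begin{equation*}
\beta_h\nabla(v-w_h)\cdot \textbf{n}_h = [\beta\nabla v\cdot \textbf{n}-\gamma] + [(\beta_h-\beta)\nabla v\cdot \textbf{n}_h+\beta\nabla v\cdot(\textbf{n}_h-\textbf{n})],
\end{equation*}
estimate the first bracket by the standard trace inequality (after an analogous rewrite bringing $\nabla(v-w_h)$ back into play), and absorb the prefactor $\beta_h^{-1}$ using the positive lower bound on $\beta$.

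The main obstacle will be controlling the coefficient mismatch $\beta-\beta_h$, which is supported on the thin lens-shaped region $T^\triangle$ between $\Gamma$ and $\Gamma_h$. Fortunately, since $\Gamma$ and $\Gamma_h$ meet at the same two endpoints $D,E\in\partial T$, the set $\partial T\cap T^\triangle$ has one-dimensional measure zero, so on $\partial T$ one has $\beta_h=\beta$ pointwise and the second bracket above collapses to $\beta\nabla v\cdot(\textbf{n}_h-\textbf{n})$, giving the desired $O(h)\|\nabla v\|_{L^2(\partial T)}$ via Lemma~\ref{lem_tra2}. In the interior, I would use $|T^\triangle|\le Ch^3$ (the $C^2$ interface stays within $O(h^2)$ of its chord $\Gamma_h$) together with the explicit representation $\nabla w_h = \alpha \textbf{t}_h + (\gamma/\beta_h)\textbf{n}_h$ and the inverse-type bound $|\alpha|+|\gamma|\le Ch^{-1}\|\nabla w_h\|_{L^2(T)}$ to estimate the $L^2(T)$ contributions involving $\beta-\beta_h$ in terms of $\|\nabla w_h\|_{L^2(T)}$, which is then split as above. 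Summing the squared tangential and normal bounds and taking square roots yields (\ref{le1}).
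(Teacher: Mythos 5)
Your route is essentially the paper's: you decompose along the discrete frame $(\textbf{t}_h,\textbf{n}_h)$, exploit (\ref{conti_linear}) so that $\nabla w_h\cdot\textbf{t}_h$ and $\beta_h\nabla w_h\cdot\textbf{n}_h$ are single constants on $T$, apply the standard trace inequality to the $H^1(T)$ combinations $\nabla v\cdot\textbf{t}-\nabla w_h\cdot\textbf{t}_h$ and $\beta\nabla v\cdot\textbf{n}-\beta_h\nabla w_h\cdot\textbf{n}_h$, use (\ref{error_nt}) for the $O(h)$ geometric errors, invoke $\beta=\beta_h$ on $\partial T$, and treat the boundary correction terms $\nabla v\cdot(\textbf{t}-\textbf{t}_h)$, $\nabla v\cdot(\textbf{n}-\textbf{n}_h)$ with Lemma~\ref{lem_tra2}. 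The tangential half of your argument is complete and matches the paper's (\ref{pro_trace1}).

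The one substantive deviation is your explicit treatment of the coefficient mismatch $(\beta-\beta_h)$ on $T^\triangle$ in the normal component, and there your bookkeeping falls short of the stated estimate. After applying the trace inequality you must bound $h^{-1/2}\|\beta\nabla v\cdot\textbf{n}-\beta_h\nabla w_h\cdot\textbf{n}_h\|_{L^2(T)}$, and the mismatch contribution you isolate is of size $h^{-1/2}\|(\beta-\beta_h)\nabla w_h\|_{L^2(T^\triangle)}$ (or the analogous term with $\nabla v$). With $|T^\triangle|\leq Ch^3$ and the (correct, interface-location-uniform) inverse bound $\|\nabla w_h\|_{L^\infty(T)}\leq Ch^{-1}\|\nabla w_h\|_{L^2(T)}$, this gives $C\|\nabla w_h\|_{L^2(T)}\leq C\|\nabla(v-w_h)\|_{L^2(T)}+C|v|_{H^1(T)}$: the factor $h^{3/2}$ from the lens exactly cancels $h^{-1/2}\cdot h^{-1}$, so you obtain $C|v|_{H^1(T)}$ rather than the claimed $Ch^{1/2}|v|_{H^1(T)}$ in (\ref{le1}); rerouting through $\|\nabla v\|_{L^2(T^\triangle)}$ and Lemma~\ref{lem_h3} does not recover the missing $h^{1/2}$ either. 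Note that the paper's proof does not surface this term at all: in passing from the trace inequality to (\ref{pro_trac_1}) it implicitly bounds $\|\beta\nabla v\cdot\textbf{n}_h-\beta_h\nabla w_h\cdot\textbf{n}_h\|_{L^2(T)}$ by $C\|\nabla(v-w_h)\|_{L^2(T)}$, which is immediate only off $T^\triangle$, so your scruple is well placed. Be aware, however, that what your argument actually proves is the slightly weaker elementwise inequality with $C|v|_{H^1(T)}$ in place of $Ch^{1/2}|v|_{H^1(T)}$; this weaker form is still sufficient for all later uses of the lemma (Lemmas~\ref{lem_stab_lift} and \ref{lema_equ} take $v=0$, and in Lemma~\ref{ener_app} the extra term is summed over interface elements and controlled by the strip argument of Lemma~\ref{strip}), but it does not reproduce (\ref{le1}) as stated, so you should either state the weaker bound or add an argument that genuinely gains the extra $h^{1/2}$ on the mismatch term.
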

\begin{proof}:
First we split the left-hand side of the inequality as
\begin{equation}\label{lem_tra}
\begin{aligned}
\|\nabla(v-w_h)\|_{L^2(\partial T)}&\leq C \|\nabla (v-w_h)\cdot \textbf{t}_h\|_{L^2(\partial T)}+C\|\beta \nabla(v-w_h)\cdot\textbf{n}_h\|_{L^2(\partial T)}\\
&\leq C\left(\|\nabla v\cdot \textbf{t}-\nabla w_h\cdot \textbf{t}_h\|_{L^2(\partial T)}+\|\nabla v\cdot(\textbf{t}-\textbf{t}_h)\|_{L^2(\partial T)}\right.\\
&\quad + \left.\|\beta\nabla v\cdot \textbf{n}-\beta_h\nabla w_h\cdot \textbf{n}_h\|_{L^2(\partial T)}+\|\beta\nabla v\cdot(\textbf{n}-\textbf{n}_h)\|_{L^2(\partial T)}\right),
\end{aligned}
\end{equation}
where we have used the fact $\beta=\beta_h$ on the boundary $\partial T$.

Next, we estimate the first and the third terms on the right-hand side of (\ref{lem_tra}). From (\ref{conti_linear}), we have
\begin{equation}\label{lem1_p2}
\nabla w_h\cdot \textbf{t}_h(x)\in H^1(T),
\end{equation}
which together with (\ref{lem1_p1}) implies
\begin{equation*}
\nabla v\cdot \textbf{t}-\nabla w_h\cdot \textbf{t}_h\in H^1(T).
\end{equation*}
Thus, by the standard trace inequality, the first term on the right-hand side of (\ref{lem_tra}) can be estimated as
\begin{equation}\label{pro_trace1}
\begin{aligned}
\|\nabla v\cdot \textbf{t}&-\nabla w_h\cdot \textbf{t}_h\|_{L^2(\partial T)}\leq C\left(h^{-\frac{1}{2}}\|\nabla v\cdot \textbf{t}-\nabla w_h\cdot \textbf{t}_h\|_{L^2(T)}+h^{\frac{1}{2}}|\nabla v\cdot \textbf{t}-\nabla w_h\cdot \textbf{t}_h|_{H^1(T)}\right)\\
&\leq Ch^{-\frac{1}{2}}\left(\|\nabla v\cdot \textbf{t}_h-\nabla w_h\cdot \textbf{t}_h\|_{L^2(T)}+\|\nabla v\cdot (\textbf{t}- \textbf{t}_h)\|_{L^2(T)}\right) +Ch^{\frac{1}{2}}|v|_{H^2(T^+\cup T^-)}\\
&\leq Ch^{-\frac{1}{2}}\left(\|\nabla (v- w_h)\|_{L^2(T)}+\|\nabla v\cdot (\textbf{t}- \textbf{t}_h)\|_{L^2(T)}\right) +Ch^{\frac{1}{2}}|v|_{H^2(T^+\cup T^-)}.
\end{aligned}
\end{equation}
For the third term on the right-hand side of (\ref{lem_tra}), from (\ref{conti_linear}), we also we have
\begin{equation}\label{lem1_p2_2}
\beta_h(x)\nabla w_h\cdot\textbf{n}_h\in H^1(T).
\end{equation}
Thus, it follows from (\ref{lem1_p1}) that
\begin{equation}\label{trace_he}
\beta\nabla v\cdot \textbf{n}-\beta_h\nabla w_h\cdot \textbf{n}_h\in H^1(T).
\end{equation}
Similar to (\ref{pro_trace1}), by the standard trace inequality, we obtain
\begin{equation}\label{pro_trac_1}
\begin{aligned}
\|\beta\nabla v\cdot \textbf{n}-\beta_h\nabla w_h\cdot \textbf{n}_h\|_{L^2(\partial T)}&\leq Ch^{-\frac{1}{2}}\left(\|\nabla (v- w_h)\|_{L^2(T)}+\|\nabla v\cdot (\textbf{n}- \textbf{n}_h)\|_{L^2(T)}\right)\\
 &\quad+Ch^{\frac{1}{2}}|v|_{H^2(T^+\cup T^-)}.
 \end{aligned}
\end{equation}
Combining (\ref{lem_tra}), (\ref{pro_trace1}) and (\ref{pro_trac_1}), we obtain
\begin{equation*}
\begin{aligned}
\|\nabla(v-w_h)\|_{L^2(\partial T)}&\leq Ch^{-\frac{1}{2}}\|\nabla (v- w_h)\|_{L^2(T)}+Ch^{\frac{1}{2}}|v|_{H^2(T^+\cup T^-)}\\
&\quad+Ch^{-\frac{1}{2}}(\|\nabla v\cdot (\textbf{n}- \textbf{n}_h)\|_{L^2(T)}+\|\nabla v\cdot (\textbf{t}- \textbf{t}_h)\|_{L^2(T)})\\
&\quad+C(\|\nabla v\cdot(\textbf{t}-\textbf{t}_h)\|_{L^2(\partial T)}+\|\nabla v\cdot(\textbf{n}-\textbf{n}_h)\|_{L^2(\partial T)}).
\end{aligned}
\end{equation*}
Using (\ref{error_nt}), we further get
\begin{equation*}
\begin{aligned}
\|\nabla(v-w_h)\|_{L^2(\partial T)}&\leq Ch^{-\frac{1}{2}}\|\nabla (v- w_h)\|_{L^2(T)}+Ch^{\frac{1}{2}}|v|_{H^2(T^+\cup T^-)}\\
&\quad+Ch^{\frac{1}{2}}\|\nabla v\|_{L^2(T)}+Ch\|\nabla v\|_{L^2(\partial T)},
\end{aligned}
\end{equation*}
which, together with Lemma~\ref{lem_tra2}, completes the proof.

\qed \end{proof}

\subsection{The stability analysis of the local lifting operator}

\begin{lemma}\label{lem_stab_lift}
There exists a  constant $C$ independent of $h$ and the interface location relative to the mesh such that
\begin{equation*}
\|r_e(\varphi)\|_{L^2(\Omega)}\leq Ch^{-\frac{1}{2}}\|\varphi\|_{L^2(e)},\quad \forall \varphi\in L^2(e),\quad \forall e\in\mathcal{E}_h^\Gamma.
\end{equation*}
\end{lemma}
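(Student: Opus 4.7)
The plan is to test the defining relation (\ref{def_lift}) with $w_h = r_e(\varphi)$ itself (which is admissible since $r_e(\varphi)\in W_e$), yielding
\begin{equation*}
\int_\Omega \beta_h\,|r_e(\varphi)|^2\,dx \;=\; \int_e \{\beta_h\,r_e(\varphi)\cdot\mathbf{n}_e\}_e\,\varphi\,ds.
\end{equation*}
Since $\beta_h\geq \min(\beta^+,\beta^-)>0$, the left-hand side controls $\|r_e(\varphi)\|_{L^2(\Omega)}^2$ up to a multiplicative constant, so the task reduces to bounding the right-hand side by $C h^{-1}\|\varphi\|_{L^2(e)}^2$.

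A naive attempt via Cauchy--Schwarz on $e$ followed by the standard polynomial inverse trace inequality $\|w_h\|_{L^2(e)}\leq C h^{-1/2}\|w_h\|_{L^2(T)}$ will not work here: on an interface element $T_i$ the lifting $r_e(\varphi)|_{T_i}$ is not a polynomial but a vector field taking two different constant values on the subregions $T_{i,h}^+$ and $T_{i,h}^-$ (see (\ref{lift_imp1})). When $\Gamma_h$ cuts $T_i$ into very unbalanced pieces, an edge intersection $e\cap T_{i,h}^\pm$ need not shrink at the same rate as the volume $|T_{i,h}^\pm|$, so such a trace inequality can fail with constants uniform in the interface location.

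Instead, the plan is to exploit the explicit closed forms (\ref{lift_coet1})--(\ref{lift_coet2}). Using the orthonormality of $\mathbf{t}_{i,h}$ and $\mathbf{n}_{i,h}$ in the representation (\ref{lift_imp1}), one computes directly
\begin{equation*}
\int_{T_i}\beta_h\,|r_e(\varphi)|^2\,dx \;=\; c_i^{\,2}\bigl(\beta^+|T_{i,h}^+|+\beta^-|T_{i,h}^-|\bigr) \;+\; \beta^+\beta^-\,d_i^{\,2}\bigl(\beta^-|T_{i,h}^+|+\beta^+|T_{i,h}^-|\bigr),
\end{equation*}
and substituting the closed forms for $c_i$ and $d_i$ yields, for instance,
\begin{equation*}
c_i^{\,2}\bigl(\beta^+|T_{i,h}^+|+\beta^-|T_{i,h}^-|\bigr) \;=\; \frac{(\mathbf{t}_{i,h}\cdot\mathbf{n}_e)^2\bigl(\int_e\beta_h\,\varphi\,ds\bigr)^{2}}{4\bigl(\beta^+|T_{i,h}^+|+\beta^-|T_{i,h}^-|\bigr)},
\end{equation*}
with an analogous identity for the $d_i$-term. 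Cauchy--Schwarz on the line integral, $|\mathbf{t}_{i,h}\cdot\mathbf{n}_e|\leq 1$, the lower bound $\beta^+|T_{i,h}^+|+\beta^-|T_{i,h}^-|\geq \min(\beta^+,\beta^-)\,|T_i|\geq C h^2$ from quasi-uniformity, and $|e|\leq C h$, together deliver a bound of the form $C h^{-1}\|\varphi\|_{L^2(e)}^2$ on each term.

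Summing the two contributions coming from the elements $T_1,T_2$ adjacent to $e$ (outside of which $r_e(\varphi)$ vanishes) and taking square roots produces the claimed estimate. The main obstacle, as flagged above, is the failure of the naive inverse trace inequality for the piecewise-constant field $r_e(\varphi)$; the explicit formulas (\ref{lift_coet1})--(\ref{lift_coet2}) circumvent it by exhibiting a cancellation between the size of $c_i,d_i$ (which can blow up when a subregion is small) and the weights $\beta^\pm|T_{i,h}^\pm|$ appearing in the weighted $L^2$ norm, so that only the harmless factor $|e|/|T_i|\leq C h^{-1}$ survives, independent of how $\Gamma_h$ cuts $T_i$.
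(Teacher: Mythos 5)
Your argument is correct, but it follows a different route from the paper. The paper's proof also starts by taking $w_h=r_e(\varphi)$ in (\ref{def_lift}), but it then bounds the resulting edge integral by $\|\varphi\|_{L^2(e)}\sum_i\|r_e(\varphi)|_{T_i}\|_{L^2(e)}$ and invokes the new trace inequality of Lemma~\ref{lema_trace} (with $v=0$ and $w_h=v_h$, where $\nabla v_h=r_e(\varphi)|_{T_i}\in W_h(T_i)$) to get $\|r_e(\varphi)|_{T_i}\|_{L^2(e)}\leq Ch^{-1/2}\|r_e(\varphi)\|_{L^2(T_i)}$; so the "trace-inequality route" you dismiss is in fact viable here — your objection is valid for a generic field that is piecewise constant on $T_{i,h}^\pm$, but the IFE structure (continuity of $\nabla v_h\cdot\mathbf{t}_h$ and of $\beta_h\nabla v_h\cdot\mathbf{n}_h$ across $\Gamma_h\cap T$) is precisely what makes the constant uniform in the cut, which is the content of Lemma~\ref{lema_trace}. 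Your alternative — computing $\int_{T_i}\beta_h|r_e(\varphi)|^2dx$ exactly from the closed forms (\ref{lift_imp1}), (\ref{lift_coet1})--(\ref{lift_coet2}), and using $|\mathbf{t}_{i,h}\cdot\mathbf{n}_e|,|\mathbf{n}_{i,h}\cdot\mathbf{n}_e|\leq1$, Cauchy--Schwarz on $e$, and the crucial lower bound $\beta^+|T_{i,h}^+|+\beta^-|T_{i,h}^-|\geq\min(\beta^\pm)\,|T_i|\geq Ch^2$ — is sound and delivers the same $Ch^{-1}\|\varphi\|^2_{L^2(e)}$ bound per element, uniformly in the interface location (note the energy identity you begin with becomes superfluous once you estimate the weighted norm directly). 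What each approach buys: yours is more elementary and self-contained, needing only the explicit representation asserted in Remark 2 rather than Lemma~\ref{lema_trace}, but it is tied to the constant-coefficient 2D formulas; the paper's argument is structural and carries over essentially unchanged to the variable-coefficient and three-dimensional versions of the method, where the explicit coefficients change.
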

\begin{proof}:
Let $T_1$ and $T_2$ be the interface elements sharing  the edge $e$, i.e., $\overline{T_1}\cap \overline{T_2}=\overline{e}$ and $T_1, T_2\in\mathcal{T}_h^\Gamma$.  Then the support of $r_e(\varphi)$ is $T_1\cup T_2$.
Taking $w_h=r_e(\varphi)$ in (\ref{def_lift}), we have
\begin{equation}\label{pro_lem_lif1}
\begin{aligned}
\|r_e(\varphi)\|^2_{L^2(\Omega)}&\leq C\|\beta_h^{1/2}r_e(\varphi)\|^2_{L^2(T_1\cup T_2)}=C\int_e\{\beta_h r_e(\varphi)\cdot\textbf{n}_e\}_e\varphi ds\\
&\leq C\|\{\beta_hr_e(\varphi)\}_e\|_{L^2(e)}\|\varphi\|_{L^2(e)}\leq C\|\varphi\|_{L^2(e)}\sum_{i=1,2}\|r_e(\varphi)|_{T_i}\|_{L^2(e)}.
\end{aligned}
\end{equation}
By definition, there exists a function $v_h\in S_h(T_1)$ such that $\nabla v_h=r_e(\varphi)|_{T_1}$. 
Choosing $v=0$ and $w_h=v_h$ in Lemma~\ref{lema_trace}, we have
\begin{equation}\label{pro_lem_lif2}
  \|r_e(\varphi)|_{T_1} \|_{L^2(e)}=\|\nabla v_h\|_{L^2(e)}\leq Ch^{-\frac{1}{2}}\|\nabla v_h\|_{L^2(T_1)}=Ch^{-\frac{1}{2}}\|r_e(\varphi)\|_{L^2(T_1)}.
\end{equation}
Similarly, on the element $T_2$, we also have the  estimate
\begin{equation}\label{pro_lem_lif3}
  \|r_e(\varphi)|_{T_2} \|_{L^2(e)}\leq Ch^{-\frac{1}{2}}\|r_e(\varphi)\|_{L^2(T_2)}.
\end{equation}
The lemma follows from (\ref{pro_lem_lif1})-(\ref{pro_lem_lif3}).
\qed \end{proof}

\subsection{The optimal convergence analysis  of the parameter free PPIFE method}\label{sub_sec_ana_IFE}

For all $v\in \left(H_0^1(\Omega)\cap\widetilde{H}^2(\Omega)\right)+ V_h^{{\rm IFE}}$, we define the following mesh-dependent norms
\begin{equation*}
\|v\|^2_h=\sum_{T\in\mathcal{T}_h}\|\sqrt{\beta_h}\nabla v\|^2_{L^2(T)}
\end{equation*}
and
\begin{equation}\label{def_nor2}
\interleave v \interleave^2_h=\|v\|_h^2+\sum_{e\in\mathcal{E}_h^\Gamma}h\|\{\beta_h\nabla v\}_e\|^2_{L^2(e)}+\sum_{e\in\mathcal{E}_h^\Gamma}h^{-1}\| [v]_e\|^2_{L^2(e)}+s_h(v,v).
\end{equation}
Note that $\|\cdot\|_h$ is indeed a norm because $\|v\|_h=0$ implies $v$ is a piecewise constant, which together with the zero boundary condition and the continuity at $\Gamma_h$ and nodal points implies $v=0$.

It follows from the Cauchy-Schwarz inequality that
\begin{equation}\label{conti}
|A_h(w,v)|\leq \interleave w \interleave_h \interleave v\interleave_h,\qquad\forall w,v\in \left(H_0^1(\Omega)\cap\widetilde{H}^2(\Omega)\right)+ V_h^{{\rm IFE}}.
\end{equation}
The following lemma shows that the parameter free PPIFE method is coercive with respect to $\|\cdot\|_h$.

\begin{lemma}[Coercivity]
The parameter free PPIFE method has the following coercive relation
\begin{equation}\label{Coercivity}
A_h(v_h,v_h)\geq \frac{1}{2}\|v_h\|_h^2,\qquad \forall v_h\in V_h^{{\rm IFE}}.
\end{equation}
\end{lemma}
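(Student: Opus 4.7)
The plan is to split $A_h(v_h,v_h)$ into the square-type term $\|v_h\|_h^2$, the (twice) consistency term involving $\{\beta_h \nabla v_h \cdot \textbf{n}_e\}_e [v_h]_e$, and the penalty $s_h(v_h,v_h)$; then to use the identity (\ref{coneect}) to rewrite the consistency term as an $L^2(\Omega)$ pairing between $\nabla v_h$ and the lifted functions $r_e([v_h]_e)$, so that every remaining quantity is comparable to $\|v_h\|_h$ or to the scalar $\mathcal{R}^2 := \sum_{e\in\mathcal{E}_h^\Gamma}\|\sqrt{\beta_h}\, r_e([v_h]_e)\|_{L^2(\Omega)}^2$. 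Since $s_h(v_h,v_h) = 4\mathcal{R}^2$, coercivity becomes an elementary Young's inequality balancing these two scalar contributions, and the $1/2$ constant should fall out provided the coefficient $4$ in (\ref{sh}) is matched against the overlap factor of the lifting supports.

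Concretely, first I would apply (\ref{coneect}) edge-by-edge to obtain
\begin{equation*}
A_h(v_h,v_h) = \|v_h\|_h^2 - 2\sum_{e\in\mathcal{E}_h^\Gamma}\int_\Omega \beta_h\, r_e([v_h]_e)\cdot \nabla v_h\,dx + 4\mathcal{R}^2.
\end{equation*}
Letting $R := \sum_{e\in\mathcal{E}_h^\Gamma} r_e([v_h]_e) \in L^2(\Omega)$, I would write the middle term as $-2\int_\Omega \beta_h R\cdot \nabla v_h\,dx$ and apply Cauchy–Schwarz to get
\begin{equation*}
\Bigl|2\int_\Omega \beta_h R\cdot \nabla v_h\,dx\Bigr| \le 2\,\|\sqrt{\beta_h}\,R\|_{L^2(\Omega)}\,\|v_h\|_h.
\end{equation*}
Next, I would control $\|\sqrt{\beta_h}\,R\|_{L^2(\Omega)}$ by $\mathcal{R}$ using the locality of $r_e$: each $r_e([v_h]_e)$ is supported on the two interface elements sharing $e$, and by Assumption A each interface element $T$ has at most $N_0=2$ interface edges on its boundary (the interface enters and exits $T$ through two distinct edges). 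Consequently, on every $T$, the sum $R|_T$ contains at most $N_0$ nonzero terms, and the discrete Cauchy–Schwarz inequality gives $|R|_T|^2 \le N_0 \sum_{e \subset \partial T,\, e\in\mathcal{E}_h^\Gamma} |r_e([v_h]_e)|^2$, which after integration yields $\|\sqrt{\beta_h}\,R\|_{L^2(\Omega)}^2 \le N_0\,\mathcal{R}^2 = 2\mathcal{R}^2$.

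Putting these together and applying Young's inequality $2ab \le t^{-1}a^2 + tb^2$ with $a=\|v_h\|_h$, $b=\sqrt{N_0}\,\mathcal{R}$, and $t=2$, I obtain
\begin{equation*}
A_h(v_h,v_h) \;\ge\; \|v_h\|_h^2 - \tfrac{1}{2}\|v_h\|_h^2 - 2\cdot N_0\,\mathcal{R}^2 + 4\mathcal{R}^2 \;=\; \tfrac{1}{2}\|v_h\|_h^2 + (4 - 2N_0)\mathcal{R}^2,
\end{equation*}
and with $N_0 = 2$ the second term vanishes, giving the claim. The main obstacle, and the only non-routine step, is justifying the overlap factor $N_0 = 2$ rigorously from Assumption A; this is precisely where the parameter-free nature of the method hinges, since it explains why the coefficient $4$ in the definition (\ref{sh}) is the smallest one compatible with coercivity and is not an ad-hoc stabilization constant.
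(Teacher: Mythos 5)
Your proof is correct and follows essentially the same route as the paper: the lifting identity (\ref{coneect}), Cauchy--Schwarz, the overlap count of at most two interface edges per element supplied by Assumption A, and Young's inequality balanced against the factor $4$ in $s_h(\cdot,\cdot)$. The only cosmetic difference is that you aggregate the liftings into $R=\sum_{e\in\mathcal{E}_h^\Gamma} r_e([v_h]_e)$ and apply the overlap factor $N_0=2$ on that side, whereas the paper keeps the edge-by-edge sum and applies the same factor $2$ to the duplicated gradient terms in (\ref{pro_coer2}); both yield the constant $\tfrac{1}{2}$.
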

\begin{proof}:
For any $e\in\mathcal{E}_h^\Gamma$, we denote by $\mathcal{P}_e$ the set of two triangles in $\mathcal{T}_h^\Gamma$ sharing  the edge $e$. From (\ref{def_lift}) and (\ref{coneect}), we know that the support of $r_e([v_h]_e)$ is $\cup_{T\in\mathcal{P}_e} T$ and
\begin{equation*}
2\sum_{e\in\mathcal{E}_h^\Gamma}\int_e\{\beta_h\nabla v_h\cdot\textbf{n}_e\}_e[v_h]_eds=2\sum_{e\in\mathcal{E}_h^\Gamma} \sum_{T\in \mathcal{P}_e}\int_T \beta_h(x) r_e([v_h]_e)\cdot \nabla v_hdx.
\end{equation*}
From the Cauchy-Schwarz inequality, we further get
\begin{equation}\label{pro_coer1}
\begin{aligned}
2\sum_{e\in\mathcal{E}_h^\Gamma}&\int_e\{\beta_h\nabla v_h\cdot\textbf{n}_e\}_e[v_h]_eds\\
&\leq 2\sum_{e\in\mathcal{E}_h^\Gamma}\left( \sum_{T\in \mathcal{P}_e }\int_T \beta_h r_e([v_h]_e)\cdot r_e([v_h]_e)dx\right)^{\frac{1}{2}}\left( \sum_{T\in \mathcal{P}_e }\int_T \beta_h \nabla v_h \cdot \nabla v_h dx\right)^{\frac{1}{2}}\\
&\leq 2\left(\sum_{e\in\mathcal{E}_h^\Gamma} \sum_{T\in \mathcal{P}_e }\int_T \beta_h r_e([v_h]_e)\cdot r_e([v_h]_e)dx\right)^{\frac{1}{2}}\left(\sum_{e\in\mathcal{E}_h^\Gamma} \sum_{T\in \mathcal{P}_e }\int_T \beta_h \nabla v_h \cdot \nabla v_h dx\right)^{\frac{1}{2}}.
\end{aligned}
\end{equation}
From Assumption A, we know that each interface element has at most two interface edges. Thus, each interface element is calculated at most twice, i.e.,
\begin{equation}\label{pro_coer2}
\sum_{e\in\mathcal{E}_h^\Gamma} \sum_{T\in \mathcal{P}_e }\int_T \beta_h \nabla v_h \cdot \nabla v_h dx\leq 2\sum_{T\in\mathcal{T}_h}\int_T \beta_h(x) \nabla v_h \cdot \nabla v_h dx.
\end{equation}
Substituting (\ref{sh}) and (\ref{pro_coer2}) into (\ref{pro_coer1}), we find
\begin{equation*}
\begin{aligned}
2\sum_{e\in\mathcal{E}_h^\Gamma}\int_e\{\beta_h\nabla v_h\cdot\textbf{n}_e\}_e[v_h]_eds&\leq \left(s_h(v_h,v_h)\right)^{\frac{1}{2}}\left(2\sum_{T\in\mathcal{T}_h}\int_T \beta_h(x) \nabla v_h \cdot \nabla v_h dx\right)^{\frac{1}{2}}\\
&\leq \frac{1}{2\epsilon}s_h(v_h,v_h)+\epsilon \sum_{T\in\mathcal{T}_h}\int_T \beta_h(x) \nabla v_h \cdot \nabla v_h dx.
\end{aligned}
\end{equation*}
 With $\epsilon=\frac{1}{2}$, the inequality above becomes,
\begin{equation*}
2\sum_{e\in\mathcal{E}_h^\Gamma}\int_e\{\beta_h\nabla v_h\cdot\textbf{n}_e\}_e[v_h]_eds\leq s_h(v_h,v_h)+\frac{1}{2} \sum_{T\in\mathcal{T}_h}\int_T \beta_h(x) \nabla v_h \cdot \nabla v_h dx.
\end{equation*}
Therefore, from (\ref{ph2}) and (\ref{sh}) we arrive at
\begin{equation*}
a_h(v_h,v_h)+s_h(v_h,v_h)\geq \frac{1}{2} \sum_{T\in\mathcal{T}_h}\int_T \beta_h(x) \nabla v_h \cdot \nabla v_h dx = \frac{1}{2}\|v_h\|_h^2.
\end{equation*}
This completes the proof of this lemma.
\qed \end{proof}

Next, we try to prove the equivalence of the two norms $\|\cdot\|_h$ and $ \interleave\cdot\interleave_h$ on the IFE space $V_h^{{\rm IFE}}$. We need the following lemma, see Lemma 3.4 in \cite{ji2014sym} and (4.15) in \cite{guo2021SIAM} for the 2D cases, and  Appendix \ref{pro_lem_tiaoyue} for the 3D cases.
\begin{lemma}\label{lem_tiaoyue}
Under the condition of Lemma~\ref{lem_unique}, there exists a constant $C$ independent of $h$ and the interface location relative to the mesh such that
\begin{equation}\label{tiaoyue}
\|[\phi]_e\|^2_{L^2(e)}\leq Ch\left(\|\nabla \phi\|^2_{L^2(T_1)}+\|\nabla \phi\|^2_{L^2(T_2)}\right),\quad \forall e\in\mathcal{E}_h^\Gamma,~~ \forall \phi \in V_h^{{\rm IFE}},
\end{equation}
where $\overline{T_1}\cap \overline{T_2}=\overline{e}$ and $T_1, T_2\in\mathcal{T}_h^\Gamma$.
\end{lemma}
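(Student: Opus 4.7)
The plan is to combine a one-dimensional Poincar\'e--Friedrichs inequality along the edge $e$ with the trace inequality of Lemma~\ref{lema_trace}. The key structural observation, which is what makes the argument work for IFE functions, is twofold: first, any $\phi\in V_h^{{\rm IFE}}$ is continuous at all vertices of $\mathcal{T}_h$, so $[\phi]_e$ vanishes at both endpoints of $e$; second, by the construction recalled in (\ref{conti_linear}), on each interface element $T_i$ the restriction $\phi|_{T_i}$ is continuous across $\Gamma_h\cap T_i$ (only its gradient may jump there), and therefore the trace of $\phi|_{T_i}$ on $e$ is a continuous, piecewise linear function along $e$. Consequently $[\phi]_e\in H^1(e)$ and we may legitimately differentiate it tangentially.

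First I would apply the 1D Poincar\'e--Friedrichs inequality on the interval $e$, whose length is comparable to $h$ by the quasi-uniformity of $\mathcal{T}_h$: since $[\phi]_e$ vanishes at both endpoints,
$$
\|[\phi]_e\|_{L^2(e)}\le C h\,\|\partial_t[\phi]_e\|_{L^2(e)},
$$
with $\partial_t$ the tangential derivative along $e$ and $C$ purely geometric. Next I would bound the tangential derivative pointwise: for almost every $x\in e$ and each $i=1,2$,
$$
|\partial_t(\phi|_{T_i}|_e)(x)|\le |\nabla\phi|_{T_i}(x)|,
$$
so that $\|\partial_t[\phi]_e\|_{L^2(e)}\le \|\nabla\phi|_{T_1}\|_{L^2(e)}+\|\nabla\phi|_{T_2}\|_{L^2(e)}$. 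Finally I would invoke Lemma~\ref{lema_trace} with $v=0$ and $w_h=\phi|_{T_i}\in S_h(T_i)$ to obtain
$$
\|\nabla\phi|_{T_i}\|_{L^2(e)}\le \|\nabla\phi|_{T_i}\|_{L^2(\partial T_i)}\le C h^{-1/2}\|\nabla\phi\|_{L^2(T_i)},\qquad i=1,2.
$$
Chaining these three estimates gives $\|[\phi]_e\|_{L^2(e)}\le Ch^{1/2}(\|\nabla\phi\|_{L^2(T_1)}+\|\nabla\phi\|_{L^2(T_2)})$, and squaring yields (\ref{tiaoyue}).

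The main obstacle, and the reason the argument is nontrivial compared to the standard conforming case, is ensuring that none of the constants degenerate as the interface cuts $T_1$ or $T_2$ into arbitrarily thin subelements. The Poincar\'e step is insensitive to the interface because it is a purely one-dimensional estimate on $e$ whose constant depends only on $|e|\sim h$; the trace step is insensitive to the interface precisely because this robustness is already built into Lemma~\ref{lema_trace}, which was proved using the flux and tangential continuity across $\Gamma_h$ encoded in (\ref{conti_linear}). In the three-dimensional extension (the case deferred to Appendix~\ref{pro_lem_tiaoyue}), the 1D Poincar\'e step on $e$ is replaced by a 2D Poincar\'e argument on the triangular face $e$, exploiting that $[\phi]_e$ vanishes at the three vertices of that face; the rest of the argument carries over verbatim once one has a three-dimensional analogue of Lemma~\ref{lema_trace} for the relevant shape function space.
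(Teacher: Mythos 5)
Your argument for the stated (two-dimensional) lemma is correct, and every constant in it is robust with respect to the interface location, but it follows a genuinely different route from the paper's. The paper (for 2D it cites Lemma 3.4 of \cite{ji2014sym} and (4.15) of \cite{guo2021SIAM}; its own proof in Appendix~\ref{pro_lem_tiaoyue} is the 3D version of the same idea) first writes $[\phi]_e=[\phi-I_h\phi]_e$ using the continuity of the standard nodal interpolant across $e$, and then invokes the explicit representation of IFE functions, $\phi=I_{h,T}\phi+c\,(w-I_{h,T}w)$ with $c$ proportional to $\nabla I_{h,T}\phi\cdot\mathbf{n}_h$ and $0\le\nabla I_{h,T}w\cdot\mathbf{n}_h\le1$, to bound the jump pointwise by $Ch\,|\nabla I_{h,T}\phi\cdot\mathbf{n}_h|$ and finally to convert $\|\nabla I_{h,T}\phi\cdot\mathbf{n}_h\|_{L^2(T)}$ into $\|\nabla\phi\|_{L^2(T)}$ as in (\ref{pro_jum3}). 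You instead use three facts: nodal continuity makes $[\phi]_e$ vanish at the endpoints of $e$; the intra-element continuity (\ref{conti_linear}) makes each trace $\phi|_{T_i}$ on $e$ continuous piecewise linear, so $[\phi]_e\in H^1(e)$ and a one-dimensional Poincar\'e--Friedrichs inequality with constant $\sim|e|\le h$ applies; and Lemma~\ref{lema_trace} with $v=0$ (exactly as the paper uses it in (\ref{pro_lem_lif2}) and (\ref{pro_equ1})) gives the edge-to-element bound $\|\nabla\phi|_{T_i}\|_{L^2(e)}\le Ch^{-1/2}\|\nabla\phi\|_{L^2(T_i)}$ without any degeneracy for small cut pieces. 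Your route is shorter and avoids the representation formula entirely, at the price of leaning on the already-established robust trace inequality; the paper's route is more computational but self-contained at the level of the shape functions.

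One caveat concerns your closing remark about three dimensions: there the extension is not ``verbatim.'' In 3D the local IFE functions are glued along the exact interface while the jump conditions are imposed on the tangent plane $\Gamma_{h,T}^{ext}$, so $\phi|_{T}$ is in general discontinuous across $\Gamma\cap T$ and the jump $[\phi]_e$ is not an $H^1$ function on the face $e$; moreover, vanishing at the three vertices of a two-dimensional face does not yield a Poincar\'e inequality for $H^1$ functions (point values are not controlled in $H^1$ in two dimensions). This is precisely why the paper's Appendix~\ref{pro_lem_tiaoyue} proof of the 3D analogue goes through the representation formula (\ref{basis_3D}) rather than a face Poincar\'e argument. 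Since the statement you were asked to prove is the 2D lemma, this does not affect the validity of your proof, but the 3D claim should be dropped or reworked.
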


The equivalence of these two norms is shown  in the following lemma.
\begin{lemma}\label{lema_equ}
Under the condition of Lemma~\ref{lem_unique}, there exists a constant $C$ independent of $h$ and the interface location relative to the mesh such that
\begin{equation}\label{equali}
\|v_h\|_h \leq \interleave v_h \interleave_h\leq C\|v_h\|_h,\qquad \forall v_h\in V_h^{{\rm IFE}}.
\end{equation}
\end{lemma}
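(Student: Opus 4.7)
The lower bound $\|v_h\|_h \le \interleave v_h\interleave_h$ is immediate from the definition (\ref{def_nor2}) and the fact that all additional terms are nonnegative. The content of the lemma is therefore the upper bound, which will be obtained by estimating each of the three extra contributions in (\ref{def_nor2}) by $\|v_h\|_h^2$ separately.

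First I would handle the average-flux term $\sum_{e\in\mathcal{E}_h^\Gamma}h\|\{\beta_h\nabla v_h\}_e\|_{L^2(e)}^2$. For each interface edge $e$ shared by $T_1,T_2\in\mathcal{T}_h^\Gamma$, I apply Lemma~\ref{lema_trace} with $v=0$ and $w_h=v_h|_{T_i}\in S_h(T_i)$ to get $\|\nabla v_h\|_{L^2(\partial T_i)}\le Ch^{-1/2}\|\nabla v_h\|_{L^2(T_i)}$, hence $h\|\{\beta_h\nabla v_h\}_e\|_{L^2(e)}^2\le C(\|\sqrt{\beta_h}\nabla v_h\|_{L^2(T_1)}^2+\|\sqrt{\beta_h}\nabla v_h\|_{L^2(T_2)}^2)$. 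Summing over $e\in\mathcal{E}_h^\Gamma$ and using the finite overlap of the element pairs $\{T_1,T_2\}$ (a consequence of Assumption A: each interface element contains at most two interface edges) yields a bound by $C\|v_h\|_h^2$.

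Next I would treat the jump term $\sum_{e\in\mathcal{E}_h^\Gamma}h^{-1}\|[v_h]_e\|_{L^2(e)}^2$. This is exactly the object bounded by Lemma~\ref{lem_tiaoyue}: $\|[v_h]_e\|_{L^2(e)}^2\le Ch(\|\nabla v_h\|_{L^2(T_1)}^2+\|\nabla v_h\|_{L^2(T_2)}^2)$. Multiplying by $h^{-1}$, summing over interface edges, and again invoking the finite overlap gives a bound by $C\|v_h\|_h^2$.

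Finally, for the lifting term $s_h(v_h,v_h)=4\sum_{e\in\mathcal{E}_h^\Gamma}\|\sqrt{\beta_h}r_e([v_h]_e)\|_{L^2(\Omega)}^2$, I apply Lemma~\ref{lem_stab_lift} to obtain $\|r_e([v_h]_e)\|_{L^2(\Omega)}^2\le Ch^{-1}\|[v_h]_e\|_{L^2(e)}^2$, which reduces this piece to the jump term already controlled above. Collecting the three estimates gives $\interleave v_h\interleave_h^2\le C\|v_h\|_h^2$.

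No step is genuinely hard once the preceding lemmas are in place; the one subtle point, where the proof could go wrong if handled carelessly, is the bookkeeping on edge-to-element sums. The coefficient-weighted trace and jump bounds produce terms indexed by edges, while $\|v_h\|_h^2$ is indexed by elements, so one must verify that each interface element appears a bounded number of times in the resulting element sum. This is exactly where Assumption A (at most two interface edges per interface element) enters, mirroring its role in the coercivity proof.
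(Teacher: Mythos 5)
Your proposal is correct and follows essentially the same route as the paper's proof: the flux-average term via Lemma~\ref{lema_trace} with $v=0$, the jump term via Lemma~\ref{lem_tiaoyue}, and the lifting term via Lemma~\ref{lem_stab_lift} reduced to the jump term, with the edge-to-element bookkeeping handled by the bounded number of interface edges per element. No substantive differences from the paper's argument.
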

\begin{proof}:
The first inequality is obvious. Thus, we just need to prove the second inequality. For any $e\in\mathcal{E}_h^\Gamma$, we denote by $\mathcal{P}_e$ the set of two triangles in $\mathcal{T}_h^\Gamma$ sharing the edge $e$. Setting $v=0$ in Lemma~\ref{lema_trace}, we have
\begin{equation}\label{pro_equ1}
\begin{aligned}
\sum_{e\in\mathcal{E}_h^\Gamma}h&\|\{\beta_h\nabla v_h\}_e\|^2_{L^2(e)}\leq C\sum_{e\in\mathcal{E}_h^\Gamma} \sum_{T\in\mathcal{P}_e}h\|\nabla v_h\|^2_{L^2(\partial T)}\leq C\sum_{e\in\mathcal{E}_h^\Gamma} \sum_{T\in\mathcal{P}_e}\|\nabla v_h\|^2_{L^2(T)}\leq C\|v_h\|^2_h.
\end{aligned}
\end{equation}
From Lemma~\ref{lem_tiaoyue}, we obtain
\begin{equation}\label{pro_equ2}
\sum_{e\in\mathcal{E}_h^\Gamma}h^{-1}\| [v_h]_e\|^2_{L^2(e)}\leq C\sum_{e\in\mathcal{E}_h^\Gamma} \sum_{T\in\mathcal{P}_e}\|\nabla v_h\|^2_{L^2(T)}\leq C\|v_h\|^2_h.
\end{equation}
From Lemma~\ref{lem_stab_lift} for the local lifting operator and (\ref{pro_equ2}), we  arrive at
\begin{equation*}
s_h(v_h,v_h)\leq C\sum_{e\in\mathcal{E}_h^\Gamma}\left\|r_e([v_h]_e)\right\|^2_{L^2(\Omega)}\leq C\sum_{e\in\mathcal{E}_h^\Gamma}h^{-1} \|[v_h]_e\|^2_{L^2(e)}\leq C\|v_h\|^2_h,
\end{equation*}
which together with (\ref{def_nor2}), (\ref{pro_equ1}) and (\ref{pro_equ2}) yields the second inequality in (\ref{equali}).
\qed \end{proof}

The following lemma provides an optimal estimate for the interpolation error in terms of the norm $\interleave\cdot \interleave_h$.
\begin{lemma}\label{ener_app}
Suppose $v\in \widetilde{H}^2(\Omega)$ and the condition of Lemma~\ref{lem_unique} holds, then there exists a constant $C$ independent of $h$ and the interface location relative to the mesh such that
\begin{equation*}
\interleave v-I_h^{{\rm IFE}}v \interleave_h\leq Ch\|v\|_{H^2(\Omega^+\cup\Omega^-)}.
\end{equation*}
\end{lemma}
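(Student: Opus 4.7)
The plan is to bound each of the four contributions to $\interleave v - I_h^{{\rm IFE}}v\interleave_h^2$ separately, showing that each is $O(h^2\|v\|^2_{H^2(\Omega^+\cup\Omega^-)})$. Write $\eta := v - I_h^{{\rm IFE}}v$ throughout. The first piece $\|\eta\|_h^2 = \sum_T \|\sqrt{\beta_h}\nabla\eta\|^2_{L^2(T)}$ is controlled directly by Theorem~\ref{theorem_interpolation}, since $\beta_h$ is uniformly bounded.

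For the normal-flux term $\sum_{e\in\mathcal{E}_h^\Gamma} h\|\{\beta_h\nabla \eta\}_e\|^2_{L^2(e)}$, I would apply the new trace inequality of Lemma~\ref{lema_trace} on each interface element $T$ adjacent to $e$ with $w_h := I_h^{{\rm IFE}}v|_T \in S_h(T)$, noting that $v|_T\in\widetilde{H}^2(T)$. Multiplying the resulting estimate by $h$ and summing over all interface edges (and the at most two adjacent interface elements for each), the first term on the right collapses to $\sum_T \|\nabla\eta\|^2_{L^2(T)}$, bounded by Theorem~\ref{theorem_interpolation}, and the remaining terms contribute $Ch^2(\|v\|^2_{H^2(\Omega^+\cup\Omega^-)}+\|v\|^2_{H^1(\Omega)})$.

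For the jump term $\sum_{e\in\mathcal{E}_h^\Gamma} h^{-1}\|[\eta]_e\|^2_{L^2(e)}$, the key observation is that on any interface element $T$, both $v$ (since $v\in H^1(\Omega)$) and $I_h^{{\rm IFE}}v$ (continuous across $\Gamma_h\cap T$ by (\ref{conti_linear}) and continuous at vertices) belong to $H^1(T)$, so $\eta|_T\in H^1(T)$. Hence the standard scaled trace inequality $\|\eta\|^2_{L^2(e)}\le C(h^{-1}\|\eta\|^2_{L^2(T)}+h|\eta|^2_{H^1(T)})$ applies on each side of $e$. Summing gives $\sum_e h^{-1}\|[\eta]_e\|^2_{L^2(e)}\le C\sum_{T\in\mathcal{T}_h^\Gamma}(h^{-2}\|\eta\|^2_{L^2(T)}+|\eta|^2_{H^1(T)})$, which is again $O(h^2\|v\|^2_{H^2(\Omega^+\cup\Omega^-)})$ by Theorem~\ref{theorem_interpolation}. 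The lifting term $s_h(\eta,\eta)$ is then immediately bounded by the stability estimate of Lemma~\ref{lem_stab_lift} in terms of the same jump, yielding the same $O(h^2)$ rate.

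The step I expect to need the most care is the flux term, because it is the one place where the nonstandard regularity (only piecewise $H^2$, and only across $\Gamma$ rather than $\Gamma_h$) genuinely matters, so the invocation of Lemma~\ref{lema_trace} rather than the classical trace inequality is essential; the $|v|_{H^1(T)}$ remainder that Lemma~\ref{lema_trace} introduces must also be carefully tracked. Once the four pieces are assembled, taking square roots gives $\interleave\eta\interleave_h\le Ch\|v\|_{H^2(\Omega^+\cup\Omega^-)}$, which is the claim.
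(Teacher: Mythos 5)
Your proposal is correct and follows essentially the same route as the paper: the same four-way splitting of $\interleave v-I_h^{{\rm IFE}}v\interleave_h^2$, with the flux term handled by Lemma~\ref{lema_trace} (taking $w_h=I_h^{{\rm IFE}}v$ on each interface element), the jump term by the standard trace inequality using $(v-I_h^{{\rm IFE}}v)|_T\in H^1(T)$, the lifting term by Lemma~\ref{lem_stab_lift}, and everything closed by Theorem~\ref{theorem_interpolation}. No gaps.
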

\begin{proof}:
 For any $e\in\mathcal{E}_h^\Gamma$, let $T_1$ and $T_2$ be two elements sharing  the edge $e$. Since $(v-I_h^{{\rm IFE}}v)|_{T}\in H^1(T)$ for all $T\in\mathcal{T}_h^\Gamma$, by the standard trace inequality, we have
\begin{equation}\label{pro_int1}
h^{-1}\| [v-I_h^{{\rm IFE}}v]_e\|^2_{L^2(e)}\leq Ch^{-2}\sum_{i=1,2}\|v-I_h^{{\rm IFE}}v\|^2_{L^2(T_i)}+C\sum_{i=1,2}|v-I_h^{{\rm IFE}}v|^2_{H^1(T_i)}.
\end{equation}
On the other hand, since $(v-I_h^{{\rm IFE}}v)|_{T}\in \widetilde{H}^2(T)+ S_h(T)$ for all $T\in\mathcal{T}_h^\Gamma$, by Lemma~\ref{lema_trace}, we have
\begin{equation}\label{pro_int2}
\begin{aligned}
h\|\{\beta_h\nabla &(v-I_h^{{\rm IFE}}v)\}_e\|^2_{L^2(e)}\leq Ch\sum_{ T\in\mathcal{P}_e   }\|\nabla (v-I_h^{{\rm IFE}}v) \|^2_{L^2(\partial T)}\\
&\leq C\sum_{i=1,2}\|\nabla (v-I_h^{{\rm IFE}}v)\|^2_{L^2(T_i)}+Ch^2\sum_{ i=1,2 }(|v|^2_{H^2(T_i^+\cup T_i^-)}+|v|^2_{H^1(T_i)}).
\end{aligned}
\end{equation}
From Lemma~\ref{lem_stab_lift} for the local lifting operator, we find
\begin{equation}\label{pro_int3}
\begin{aligned}
s_h(v-I_h^{{\rm IFE}}v,v-I_h^{{\rm IFE}}v)&\leq C\sum_{e\in\mathcal{E}_h^\Gamma}\left\|r_e([v-I_h^{{\rm IFE}}v]_e)\right\|^2_{L^2(\Omega)}\leq C\sum_{e\in\mathcal{E}_h^\Gamma}h^{-1}\|[v-I_h^{{\rm IFE}}v]_e\|^2_{L^2(e)}.
\end{aligned}
\end{equation}
Combining (\ref{def_nor2}), (\ref{pro_int1}), (\ref{pro_int2}) and (\ref{pro_int3}), we obtain
\begin{equation*}
\begin{aligned}
\interleave v-I_h^{{\rm IFE}}v \interleave _h^2&\leq Ch^{-2}\|v-I_h^{{\rm IFE}}v\|^2_{L^2(\Omega)}+C\sum_{T\in\mathcal{T}_h}|v-I_h^{{\rm IFE}}v|^2_{H^1(T)}+ Ch^2\|v\|_{H^2(\Omega^+\cup\Omega^-)},
\end{aligned}
\end{equation*}
which together with Theorem~\ref{theorem_interpolation} implies this lemma.
\qed \end{proof}

The following lemma concerns the consistent error caused by replacing $\beta(x)$ by $\beta_h(x)$.
\begin{lemma}\label{lem_consis}
Let $u$ and $u_h$ be the solutions to (\ref{p1.1})-(\ref{p1.7}) and (\ref{ph1})-(\ref{sh}), respectively.  Then it holds that
\begin{equation*}
A_h(u-u_h,v_h)= \sum_{T\in\mathcal{T}_h^\Gamma}\int_{T^\triangle}(\beta_h(x)-\beta(x))\nabla u\cdot\nabla v_hdx,  \qquad \forall v_h\in V_h^{{\rm IFE}},
\end{equation*}
where $T^\triangle$ is define in (\ref{tri}).

\end{lemma}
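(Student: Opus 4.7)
The plan is to establish this Galerkin-orthogonality-type identity by direct computation of $A_h(u,v_h)$. Starting from $A_h(u-u_h,v_h)=A_h(u,v_h)-\int_\Omega fv_h\,dx$ (via the discrete scheme (\ref{ph1})), I would first exploit that $u\in H_0^1(\Omega)\cap\widetilde H^2(\Omega)$ is single-valued across every edge, so $[u]_e=0$ on all $e\in\mathcal{E}_h^\Gamma$; this makes $s_h(u,v_h)=0$ and drops the $[u]_e$ term in (\ref{ph2}), yielding $A_h(u,v_h)=\sum_T\int_T\beta_h\nabla u\cdot\nabla v_h\,dx-\sum_{e\in\mathcal{E}_h^\Gamma}\int_e\{\beta_h\nabla u\cdot\textbf{n}_e\}_e[v_h]_e\,ds$. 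Splitting $\beta_h=\beta+(\beta_h-\beta)$ in the volume term isolates the target $\sum_{T\in\mathcal{T}_h^\Gamma}\int_{T^\triangle}(\beta_h-\beta)\nabla u\cdot\nabla v_h\,dx$ (since $\beta_h-\beta$ is supported on $T^\triangle$), and Assumption~A makes $\Gamma\cap e=\Gamma_h\cap e$ a single point so that $\beta=\beta_h$ pointwise on every $e\in\mathcal{E}_h^\Gamma$, allowing the replacement $\beta_h\to\beta$ in the edge sum.

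It then suffices to prove the identity $\sum_T\int_T\beta\nabla u\cdot\nabla v_h\,dx=\int_\Omega fv_h\,dx+\sum_{e\in\mathcal{E}_h^\Gamma}\int_e\{\beta\nabla u\cdot\textbf{n}_e\}_e[v_h]_e\,ds$ via elementwise Green's formula. On each $T\in\mathcal{T}_h^\Gamma$ I would subdivide into four cells $K_{\sigma\tau}:=T\cap\Omega^\sigma\cap\Omega_h^\tau$, $\sigma,\tau\in\{+,-\}$, on which $u^\sigma$ and $v_h^\tau$ are each smooth; applying Green's identity on every $K_{\sigma\tau}$ (and the standard one on non-interface elements) together with $-\nabla\cdot(\beta^\sigma\nabla u^\sigma)=f$ reproduces $\int_\Omega fv_h\,dx$ plus boundary integrals over $\Gamma\cap T$, over $\Gamma_h\cap T$, and over $\partial T$. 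The two interior cancellations are the heart of the argument: on $\Gamma\cap T$ the $K_{+\tau}$ and $K_{-\tau}$ contributions carry opposite normals, and the interface flux condition (\ref{p1.3}), $[\beta\nabla u\cdot\textbf{n}]_\Gamma=0$, makes them cancel; on $\Gamma_h\cap T$ the $K_{\sigma+}$ and $K_{\sigma-}$ contributions carry opposite normals, and the IFE continuity (\ref{conti_linear}), $[v_h]_{\Gamma_h\cap T}=0$, makes them cancel.

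Assembling the $\partial T$ pieces over elements yields a sum over interior edges $\sum_e\int_e[(\beta\nabla u\cdot\textbf{n}_e)v_h]_e\,ds$ (the $\partial\Omega$ contribution vanishes because the interface lies strictly inside $\Omega$, so boundary elements are non-interface and $v_h\in V_{h,0}^{\rm IFE}$ is a linear polynomial vanishing at every boundary node, hence on all of $\partial\Omega$). Expanding by the product-jump identity $[ab]=\{a\}[b]+[a]\{b\}$ and noting that (i) $[v_h]_e=0$ on every non-interface edge (where $v_h|_e$ is a single linear polynomial matching at the endpoints) and (ii) $[\beta\nabla u\cdot\textbf{n}_e]_e=0$ on each subsegment $e^\pm=e\cap\Omega^\pm$ of an interface edge (since $u^\pm\in H^2(\Omega^\pm)$ has a single-valued trace for $\nabla u^\pm$ on a curve inside $\Omega^\pm$) reduces the sum to precisely $\sum_{e\in\mathcal{E}_h^\Gamma}\int_e\{\beta\nabla u\cdot\textbf{n}_e\}_e[v_h]_e\,ds$, closing the argument. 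The main obstacle is the careful bookkeeping of the four-cell decomposition and of the sign/normal conventions so that the two cancellations (flux jump on $\Gamma$, value jump on $\Gamma_h$) apply cleanly; once those are identified, the remaining algebra is routine.
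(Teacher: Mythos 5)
Your proposal is correct and follows essentially the same route as the paper: elementwise integration by parts combined with the continuity of $u$ and of $\beta\nabla u\cdot\mathbf{n}_e$ across all edges, the vanishing of $[u]_e$ and hence of $s_h(u,v_h)$, the fact that $\beta=\beta_h$ on interface edges (by Assumption~A), and the observation that $\beta_h-\beta$ is supported only on the mismatch regions $T^\triangle$. Your four-cell subdivision $K_{\sigma\tau}$ of interface elements is a more detailed bookkeeping of the same Green's-formula step the paper states tersely (and is even slightly more than needed, since only $u$ is integrated by parts and $v_h\in H^1(T)$, so splitting along $T^\pm$ alone suffices), but it leads to the identical identity and conclusion.
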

\begin{proof}:
Integrating by parts and summing up over all triangles in $\mathcal{T}_h$, we have, for any $v_h\in V_h^{IEF}$,
\begin{equation}\label{consis1}
\begin{aligned}
&\int_{\Omega}fv_hdx=\sum_{T\in\mathcal{T}_h}\int_T\beta(x)\nabla u\cdot\nabla v_hdx-\sum_{e\in\mathcal{E}^\Gamma_h}\int_{e}\beta(x)\nabla u\cdot\textbf{n}_e[v_h]ds\\
&=\sum_{T\in\mathcal{T}_h}\int_T\beta(x)\nabla u\cdot\nabla v_hdx-\sum_{e\in\mathcal{E}_h^\Gamma}\int_e\{\beta\nabla u\cdot\textbf{n}_e\}_e[v_h]_e+\{\beta\nabla v_h\cdot\textbf{n}_e\}_e[u]_eds,
\end{aligned}
\end{equation}
where we have used the facts that $u|_{\partial\Omega}=0$, $v_h|_{\partial\Omega}=0$, the function $u$ and its flux are continuous on all edges $\mathcal{E}_h$ since $u\in \widetilde{H}^2(\Omega)$, and $v_h\in V_h^{{\rm IFE}}$ is only discontinuous on interface edges $\mathcal{E}_h^\Gamma$.

Since $[u]_e=0$, for any $e\in\mathcal{E}_h^\Gamma$, we have $r_e([u]_e)=0$ and
\begin{equation}\label{cosis2}
s_h(u,v_h)=4\sum_{e\in\mathcal{E}^\Gamma_h}\int_{\Omega}\beta_h(x) r_e([u]_e)\cdot r_e([v_h]_e)dx=0,\qquad\forall v_h\in V_h^{{\rm IFE}}.
\end{equation}
Combining (\ref{ph1})-(\ref{sh}), (\ref{consis1}) and (\ref{cosis2}), and using the fact that $\beta_h(x)=\beta(x)$ on interface edges $\mathcal{E}_h^\Gamma$, we arrive at the desired identity,
\begin{equation*}
a_h(u-u_h,v_h)+s_h(u-u_h,v_h)=\sum_{T\in\mathcal{T}_h^\Gamma}\int_{T^\triangle}(\beta_h(x)-\beta(x))\nabla u\cdot\nabla v_hdx.
\end{equation*}
\qed \end{proof}

\vspace{-0.45cm}
We now provide the $H^1$ error estimate for the parameter free PPIFE method in the following theorem.
\begin{theorem}\label{theo_mainH1}
Let $u$ and $u_h$ be the solutions to (\ref{p1.1})-(\ref{p1.7}) and (\ref{ph1})-(\ref{sh}), respectively. Under the condition of Lemma~\ref{lem_unique}, there exists a constant $C$ independent of $h$ and the interface location relative to the mesh such that
\begin{equation}\label{h1_error}
\interleave u-u_h \interleave_h\leq Ch\|u\|_{H^2(\Omega^+\cup\Omega^-)}.
\end{equation}
\end{theorem}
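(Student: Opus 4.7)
The plan is to apply a Strang-type argument leveraging the machinery built up in Section~\ref{sec_analysis}: coercivity (\ref{Coercivity}), continuity (\ref{conti}), the norm equivalence on $V_h^{{\rm IFE}}$ (Lemma~\ref{lema_equ}), the energy-norm interpolation estimate (Lemma~\ref{ener_app}), and the Galerkin inconsistency identity (Lemma~\ref{lem_consis}).

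First I would split the error via the triangle inequality as
\[
\interleave u-u_h\interleave_h\le \interleave u-I_h^{{\rm IFE}}u\interleave_h+\interleave w_h\interleave_h,\qquad w_h:=I_h^{{\rm IFE}}u-u_h\in V_{h,0}^{{\rm IFE}},
\]
so that the interpolation piece is already handled by Lemma~\ref{ener_app} and only $\interleave w_h\interleave_h$ remains to be controlled. Applying coercivity to $w_h$ and adding/subtracting $A_h(u,w_h)$ yields
\[
\tfrac{1}{2}\|w_h\|_h^2\le A_h(w_h,w_h)=A_h(I_h^{{\rm IFE}}u-u,w_h)+A_h(u-u_h,w_h).
\]
The first term is bounded by continuity (\ref{conti}), Lemma~\ref{ener_app} and the upper bound in Lemma~\ref{lema_equ}, giving $Ch\|u\|_{H^2(\Omega^+\cup\Omega^-)}\|w_h\|_h$.

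The main obstacle, and the only delicate step, is the consistency term $A_h(u-u_h,w_h)$, which by Lemma~\ref{lem_consis} equals
\[
\sum_{T\in\mathcal{T}_h^\Gamma}\int_{T^\triangle}(\beta_h-\beta)\nabla u\cdot\nabla w_h\,dx.
\]
Since $|\beta_h-\beta|\le|\beta^+-\beta^-|$ on $T^\triangle$ and vanishes elsewhere, Cauchy--Schwarz reduces the task to estimating $\|\nabla u\|_{L^2(\cup T^\triangle)}$ and $\|\nabla w_h\|_{L^2(\cup T^\triangle)}$. The second factor is trivially bounded by $C\|w_h\|_h$. For the first, the key observation is that $\cup_{T\in\mathcal{T}_h^\Gamma} T^\triangle\subset N(\Gamma,Ch^2)$ because $\Gamma_h$ is the piecewise linear interpolant of the $C^2$ curve $\Gamma$ and is therefore $O(h^2)$-close to $\Gamma$. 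Extending $u^\pm$ via Lemma~\ref{lem_ext} and invoking the $\delta$-strip estimate (Lemma~\ref{strip}) with $\delta=Ch^2$ then gives
\[
\|\nabla u\|_{L^2(\cup T^\triangle)}\le\sum_{i=\pm}\|\nabla\mathbb{E}^i u^i\|_{L^2(N(\Gamma,Ch^2))}\le Ch\|u\|_{H^2(\Omega^+\cup\Omega^-)}.
\]

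Combining the two bounds produces $\|w_h\|_h^2\le Ch\|u\|_{H^2(\Omega^+\cup\Omega^-)}\|w_h\|_h$, hence $\|w_h\|_h\le Ch\|u\|_{H^2(\Omega^+\cup\Omega^-)}$. The upper norm equivalence from Lemma~\ref{lema_equ} converts this into $\interleave w_h\interleave_h\le Ch\|u\|_{H^2(\Omega^+\cup\Omega^-)}$, and combined with the interpolation bound from Lemma~\ref{ener_app} through the triangle inequality above, this yields (\ref{h1_error}). The only nontrivial ingredient specific to IFE methods (beyond standard discontinuous Galerkin technology) is the geometric consistency estimate via the $\delta$-strip with $\delta=Ch^2$; everything else is a direct application of the tools assembled earlier in the paper.
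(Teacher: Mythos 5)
Your proposal is correct, and its skeleton is exactly the paper's: split off the interpolation error, apply coercivity and the norm equivalence of Lemma~\ref{lema_equ} to $w_h=I_h^{{\rm IFE}}u-u_h$, bound $A_h(I_h^{{\rm IFE}}u-u,w_h)$ by the continuity (\ref{conti}) together with Lemma~\ref{ener_app}, and treat the remaining term through the consistency identity of Lemma~\ref{lem_consis} (which, as you use it, indeed needs $w_h\in V_{h,0}^{{\rm IFE}}$, as in the paper's proof of that lemma). The one genuine deviation is the estimate of $\|\nabla u\|_{L^2(\cup_T T^\triangle)}$: the paper (see (\ref{pro_error4})) extends $u^\pm$ by Lemma~\ref{lem_ext} and then applies Lemma~\ref{lem_h3} elementwise, followed by the global trace inequality on $\Gamma$, whereas you invoke the geometric inclusion $\cup_{T\in\mathcal{T}_h^\Gamma}T^\triangle\subset N(\Gamma,Ch^2)$ and the first inequality of Lemma~\ref{strip} with $\delta\sim h^2$. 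Both routes give the same $O(h)$ factor with constants independent of the cut location; your version is arguably shorter, but it rests on the $O(h^2)$ chord-versus-arc bound, which the paper never states in 2D (it records only the 3D analogue, the first inequality in (\ref{rela_3d_nnh2})), so you should add the one-line justification that under Assumptions A--B every point of $T^\triangle$ lies between $\Gamma\cap T$ and the secant $\Gamma_h\cap T$ and hence within $Ch_T^2$ of $\Gamma$, with $C$ depending on the $C^2$ norm of $\Gamma$ — this is precisely the thinness property that underlies Lemma~\ref{lem_h3} in the paper's route. The remaining small steps you cite ($\|\nabla w_h\|_{L^2(\cup T^\triangle)}\leq C\|w_h\|_h$ via the lower bound on $\beta_h$, and the final triangle inequality) match the paper verbatim.
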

\begin{proof}:
By using Lemma~\ref{lema_equ} for the equivalence of two norms, the coercivity (\ref{Coercivity})  and the continuity (\ref{conti}) of the bilinear form $A_h(\cdot,\cdot)$, we have
\begin{equation}\label{pro_error2}
\begin{aligned}
\interleave u_h&-I_h^{{\rm IFE}}u_h \interleave^2_h\leq C\|u_h-I_h^{{\rm IFE}}u_h\|^2_h\leq CA_h(u_h-I_h^{{\rm IFE}}u_h,u_h-I_h^{{\rm IFE}}u_h)\\
&=CA_h(u-I_h^{{\rm IFE}}u_h,u_h-I_h^{{\rm IFE}}u_h)+CA_h(u_h-u,u_h-I_h^{{\rm IFE}}u_h)\\
&\leq C\interleave u-I_h^{{\rm IFE}}u_h\interleave_h \interleave u_h-I_h^{{\rm IFE}}u_h \interleave_h+CA_h(u_h-u,u_h-I_h^{{\rm IFE}}u_h)
\end{aligned}
\end{equation}
From Lemma~\ref{lem_consis}, we  get
\begin{equation}\label{pro_error3}
\begin{aligned}
&\left|A_h(u_h-u,u_h-I_h^{{\rm IFE}}u_h)\right|\leq\sum_{T\in\mathcal{T}_h^\Gamma}\int_{T^\triangle}\left|(\beta_h-\beta)\nabla u\cdot\nabla (u_h-I_h^{{\rm IFE}}u_h)\right|ds\\
&\leq C\sum_{T\in\mathcal{T}_h^\Gamma}\int_{T^\triangle}\left|\nabla u\cdot\nabla (u_h-I_h^{{\rm IFE}}u_h)\right|ds\leq C\interleave u_h-I_h^{{\rm IFE}}u_h\interleave_h\left(\sum_{T\in\mathcal{T}_h^\Gamma}\|\nabla u\|^2_{L^2(T^\triangle)}\right)^{1/2}.
\end{aligned}
\end{equation}
From Lemma~\ref{lem_h3}, the global trace inequality on $\Omega^\pm$ and Lemma~\ref{lem_ext}, we  continue to derive the following,
\begin{equation}\label{pro_error4}
\begin{aligned}
\sum_{T\in\mathcal{T}_h^\Gamma} \|\nabla u\|^2_{L^2(T^\triangle)}&=\sum_{T\in\mathcal{T}_h^\Gamma}\sum_{i=\pm}\|\nabla u^i\|^2_{L^2(T^\triangle\cap T^i)}\leq\sum_{T\in\mathcal{T}_h^\Gamma}\sum_{i=\pm}\|\nabla \mathbb{E}^iu^i\|^2_{L^2(T^\triangle) }\\
&\leq C\sum_{T\in\mathcal{T}_h^\Gamma}\sum_{i=\pm} \left(h^2\|\nabla u^i\|^2_{L^2(T\cap \Gamma)}+h^4|\mathbb{E}^iu^i|^2_{H^2(T^\triangle)}\right)\\
&\leq Ch^2\sum_{i=\pm}\| \nabla u^i \|^2_{L^2(\Gamma)}+Ch^4\sum_{i=\pm}| \mathbb{E}^i u^i|^2_{H^2(\Omega)}\\
&\leq Ch^2\sum_{i=\pm} \|u \|^2_{H^2(\Omega^i)}=Ch^2\|u\|^2_{H^2(\Omega^+\cup \Omega^-)}.
\end{aligned}
\end{equation}
Substituting (\ref{pro_error3}) and (\ref{pro_error4}) into (\ref{pro_error2}), we obtain
\begin{equation*}
\interleave u_h-I_h^{{\rm IFE}}u_h \interleave_h\leq C\interleave u-I_h^{{\rm IFE}}u_h\interleave_h +Ch\|u\|_{H^2(\Omega^+\cup \Omega^-)}.
\end{equation*}
Thus, by the triangle inequality and Lemma~\ref{ener_app}, we  arrive at
\begin{equation*}
\begin{aligned}
\interleave u- u_h \interleave_h&\leq  \interleave u-I_h^{{\rm IFE}}u_h \interleave_h+\interleave u_h-I_h^{{\rm IFE}}u_h \interleave_h\\
&\leq  C\interleave u-I_h^{{\rm IFE}}u_h \interleave_h+Ch\|u\|_{H^2(\Omega^+\cup \Omega^-)}\leq Ch\|u\|_{H^2(\Omega^+\cup \Omega^-)},
\end{aligned}
\end{equation*}
which completes the proof of the theorem.
\qed \end{proof}

Finally, we show the optimal $L^2$ error estimate for the  parameter free PPIFE method using the standard duality argument.
\begin{theorem}\label{theo_mainL2}
Let $u$ and $u_h$ be the solutions to (\ref{p1.1})-(\ref{p1.7}) and (\ref{ph1})-(\ref{sh}), respectively. Under the condition of Lemma~\ref{lem_unique}, there exists a constant $C$ independent of $h$ and the interface location relative to the mesh such that
\begin{equation*}
\|u-u_h\|_{L^2(\Omega)}\leq Ch^2\|u\|_{H^2(\Omega^+\cup \Omega^-)}.
\end{equation*}
\end{theorem}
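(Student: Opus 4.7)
The plan is to carry out the standard Aubin--Nitsche duality trick, modified to account for the two non-conformities of the method, namely the jumps of $u_h$ on interface edges and the mismatch between $\beta$ and $\beta_h$ on the symmetric difference regions $T^\triangle$. Let $z\in H^1_0(\Omega)\cap\widetilde H^2(\Omega)$ solve the dual problem $-\nabla\cdot(\beta\nabla z)=u-u_h$ in $\Omega\setminus\Gamma$ with the homogeneous jump conditions and $z=0$ on $\partial\Omega$. Theorem~\ref{theo_reg} yields the stability estimate $\|z\|_{H^2(\Omega^+\cup\Omega^-)}\le C\|u-u_h\|_{L^2(\Omega)}$, which is what drives the duality.

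First I would test the dual equation against $u-u_h$ and integrate by parts element-by-element. Because $u\in\widetilde H^2(\Omega)$ has zero jumps across every edge while $u_h$ jumps only on $\mathcal E_h^\Gamma$, and because $z$ is globally continuous with $\beta\nabla z\cdot\mathbf n$ continuous across $\Gamma=\Gamma_h\cap\partial T$ (so $\beta=\beta_h$ pointwise on any interface edge), the edge contributions from $z$ reduce cleanly. After adding and subtracting $\beta_h$ and identifying the resulting expression with $a_h(u-u_h,z)$, and observing that $s_h(u-u_h,z)=0$ because $[z]_e=0$, one obtains the key identity
\begin{equation*}
\|u-u_h\|_{L^2(\Omega)}^2=A_h(u-u_h,z)+\sum_{T\in\mathcal T_h^\Gamma}\int_{T^\triangle}(\beta-\beta_h)\nabla z\cdot\nabla(u-u_h)\,dx.
\end{equation*}

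Next I would split $A_h(u-u_h,z)=A_h(u-u_h,z-I_h^{{\rm IFE}}z)+A_h(u-u_h,I_h^{{\rm IFE}}z)$. The first term is controlled by the continuity bound (\ref{conti}) together with Theorem~\ref{theo_mainH1} and Lemma~\ref{ener_app}, giving $Ch^2\|u\|_{H^2(\Omega^+\cup\Omega^-)}\|z\|_{H^2(\Omega^+\cup\Omega^-)}$. The second term is exactly the consistency defect from Lemma~\ref{lem_consis}, equal to $\sum_T\int_{T^\triangle}(\beta_h-\beta)\nabla u\cdot\nabla I_h^{{\rm IFE}}z\,dx$. I would estimate it by Cauchy--Schwarz, using the $T^\triangle$ bound derived in (\ref{pro_error4}) (applied both to $u$ and, with Theorem~\ref{theorem_interpolation} and the triangle inequality, to $\nabla I_h^{{\rm IFE}}z$) to get another $Ch^2\|u\|_{H^2}\|z\|_{H^2}$ contribution. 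The residual term $\int_{T^\triangle}(\beta-\beta_h)\nabla z\cdot\nabla(u-u_h)$ is handled the same way: the $T^\triangle$ factor on $\nabla z$ costs one power of $h$ times $\|z\|_{H^2}$, while $\|\nabla(u-u_h)\|_{L^2}\le \interleave u-u_h\interleave_h\le Ch\|u\|_{H^2}$ by Theorem~\ref{theo_mainH1}.

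Combining the three bounds gives $\|u-u_h\|_{L^2(\Omega)}^2\le Ch^2\|u\|_{H^2(\Omega^+\cup\Omega^-)}\|z\|_{H^2(\Omega^+\cup\Omega^-)}$, and the dual regularity estimate absorbs $\|z\|_{H^2}$ into $\|u-u_h\|_{L^2}$, yielding the desired bound after cancellation. The main technical hurdle is the verification that the edge integrals arising from integration by parts against $z$ actually disappear: this requires tracking that the exact flux $\beta\nabla z\cdot\mathbf n_e$ agrees with the IFE average $\{\beta_h\nabla z\cdot\mathbf n_e\}_e$ pointwise on every interface edge, which relies on the continuity of $z$ across the edge together with the coincidence of $\beta$ and $\beta_h$ on $e$ itself. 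Once this is in hand the rest is a mechanical application of continuity, the $H^1$ estimate, the interpolation estimate, and the $T^\triangle$ bound.
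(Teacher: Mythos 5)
Your proposal is correct and follows essentially the same route as the paper's proof: the same duality identity $\|u-u_h\|_{L^2(\Omega)}^2=A_h(z,u-u_h)+\sum_{T}\int_{T^\triangle}(\beta-\beta_h)\nabla z\cdot\nabla(u-u_h)\,dx$ (using $[z]_e=0$, $s_h(z,\cdot)=0$ and $\beta=\beta_h$ on edges), the same split $A_h(\cdot,z-I_h^{{\rm IFE}}z)+A_h(\cdot,I_h^{{\rm IFE}}z)$ with the second term handled via Lemma~\ref{lem_consis} and the $T^\triangle$ strip bounds, and the same absorption of $\|z\|_{H^2(\Omega^+\cup\Omega^-)}$ by the dual regularity estimate. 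No substantive differences from the paper's argument.
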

\begin{proof}:
Let $z$ be the solution of the following auxiliary problem
\begin{equation}\label{auxi}
\begin{aligned}
&-\nabla\cdot(\beta(x)\nabla z)=u-u_h\qquad \mbox{ in }\Omega\backslash\Gamma,\\
&[z]_\Gamma=0,~[\beta\nabla z\cdot \textbf{n}]_\Gamma=0\qquad ~\mbox{ on }\Gamma,\\
&z=0\qquad \qquad\qquad\qquad\qquad~~\mbox{ on }\partial \Omega.
\end{aligned}
\end{equation}
Since $u-u_h\in L^2(\Omega)$, it follows from Theorem~\ref{theo_reg} that
\begin{equation}\label{reg_L2}
z\in \widetilde{H}^2(\Omega)~~\mbox{ and }~~\|z\|_{H^2(\Omega^+\cup\Omega^-)}\leq C\|u-u_h\|_{L^2(\Omega)}.
\end{equation}
Multiplying (\ref{auxi}) by $u-u_h$ and integrating by parts, we find
\begin{equation*}
\begin{aligned}
\|u-u_h\|_{L^2(\Omega)}^2&=\sum_{T\in\mathcal{T}_h}\int_T-\nabla\cdot(\beta(x)\nabla z)(u-u_h)dx\\
&=\sum_{T\in\mathcal{T}_h}\int_T\beta\nabla z\cdot\nabla(u-u_h)dx-\sum_{e\in\mathcal{E}_h^\Gamma}\int_e\{\beta\nabla z\cdot\textbf{n}_e\}_e[u-u_h]_eds.
\end{aligned}
\end{equation*}
Using the facts that $[z]_e=0$  and $s_h(z,u-u_h)=0$, we have
\begin{equation}\label{pro_l2_1}
\begin{aligned}
\|u-u_h&\|_{L^2(\Omega)}^2=\sum_{T\in\mathcal{T}_h}\int_T\beta\nabla z\cdot\nabla(u-u_h)dx+s_h(z,u-u_h)\\
&\qquad\qquad-\sum_{e\in\mathcal{E}_h^\Gamma}\int_e\left(\{\beta\nabla z\cdot\textbf{n}_e\}_e[u-u_h]_e+\{\beta\nabla (u-u_h)\cdot\textbf{n}_e\}_e[z]_e\right)ds\\
&=A_h(z,u-u_h)+\sum_{T\in\mathcal{T}_h}\int_T(\beta-\beta_h)\nabla z\cdot \nabla(u-u_h)dx\\
&=A_h(z-I_h^{{\rm IFE}}z,u-u_h)+A_h(I_h^{{\rm IFE}}z,u-u_h)+\sum_{T\in\mathcal{T}_h}\int_T(\beta-\beta_h)\nabla z\cdot \nabla(u-u_h)dx.
\end{aligned}
\end{equation}
The first term of (\ref{pro_l2_1})  is bounded as shown below,
\begin{equation}\label{pro_l2_2}
\begin{aligned}
\left|A_h(z-I_h^{{\rm IFE}}z,u-u_h)\right|&\leq C \interleave z-I_h^{{\rm IFE}}z\interleave_h\interleave u-u_h \interleave_h\leq Ch^2\|z\|_{H^2(\Omega^+\cup\Omega^-)}\|u\|_{H^2(\Omega^+\cup\Omega^-)},
\end{aligned}
\end{equation}
where we have used (\ref{conti}) in the first inequality, and Lemma~\ref{ener_app} and Theorem~\ref{theo_mainH1} in the  second inequality.
From Lemma~\ref{lem_consis} and the symmetry of the bilinear form $A_h(\cdot,\cdot)$, we estimate the second term of (\ref{pro_l2_1})  below,
\begin{equation*}
\begin{aligned}
&\left|A_h(I_h^{{\rm IFE}}z,u-u_h)\right|\leq \sum_{T\in\mathcal{T}_h^\Gamma}\int_{T^\triangle}\left|(\beta_h-\beta)\nabla u\cdot\nabla I_h^{{\rm IFE}}z\right|dx\\
&\leq C\sum_{T\in\mathcal{T}_h^\Gamma}\|\nabla u\|_{L^2(T^\triangle)}\|\nabla I_h^{{\rm IFE}}z-z\|_{L^2(T^\triangle)}+C\sum_{T\in\mathcal{T}_h^\Gamma}\|\nabla u\|_{L^2(T^\triangle)}\|\nabla z\|_{L^2(T^\triangle)}\\
&\leq C\left(\interleave I_h^{{\rm IFE}}z-z \interleave_h+\left(\sum_{T\in\mathcal{T}_h^\Gamma}\|\nabla z\|^2_{L^2(T^\triangle)} \right)^{1/2} \right)\left(\sum_{T\in\mathcal{T}_h^\Gamma}\|\nabla u\|^2_{L^2(T^\triangle)} \right)^{1/2}\\
&\leq Ch^2\|z\|_{H^2(\Omega^+\cup\Omega^-)}\|u\|_{H^2(\Omega^+\cup\Omega^-)}+Ch\|u\|_{H^2(\Omega^+\cup\Omega^-)}\left(\sum_{T\in\mathcal{T}_h^\Gamma}\|\nabla z\|^2_{L^2(T^\triangle)} \right)^{1/2},
\end{aligned}
\end{equation*}
where we have used (\ref{pro_error4}) and Lemma~\ref{ener_app} in the last inequality.
Similar to (\ref{pro_error4}), we also  have
\begin{equation}\label{wwww}
\sum_{T\in\mathcal{T}_h^\Gamma}\|\nabla z\|^2_{L^2(T^\triangle)} \leq Ch^2\|z\|^2_{H^2(\Omega^+\cup\Omega^-)},
\end{equation}
which leads to
\begin{equation}\label{pro_l2_3}
\left|A_h(I_h^{{\rm IFE}}z,u-u_h)\right|\leq Ch^2\|z\|_{H^2(\Omega^+\cup\Omega^-)}\|u\|_{H^2(\Omega^+\cup\Omega^-)}.
\end{equation}
Next, the third term of (\ref{pro_l2_1}) can be estimated below
\begin{equation}\label{pro_l2_4}
\begin{aligned}
&\left|\sum_{T\in\mathcal{T}_h}\int_{T}(\beta-\beta_h)\nabla z\cdot \nabla(u-u_h)dx\right|\leq C\sum_{T\in\mathcal{T}_h^\Gamma}\|\nabla z\|_{L^2(T^\triangle)}\|\nabla (u-u_h)\|_{L^2(T^\triangle)}\\
&\leq  \interleave u-u_h\interleave_h \left(\sum_{T\in\mathcal{T}_h^\Gamma}\|\nabla z\|^2_{L^2(T^\triangle)}\right)^{1/2}\leq Ch^2\|z\|_{H^2(\Omega^+\cup\Omega^-)}\|u\|_{H^2(\Omega^+\cup\Omega^-)},
\end{aligned}
\end{equation}
where we have used (\ref{wwww}) and Lemma~\ref{ener_app} in the last inequality.
Substituting (\ref{pro_l2_2}), (\ref{pro_l2_3}) and (\ref{pro_l2_4}) into (\ref{pro_l2_1})  and using the regularity result (\ref{reg_L2}), we  finally arrive at
\begin{equation*}
\|u-u_h\|_{L^2(\Omega)}\leq Ch^2\|u\|_{H^2(\Omega^+\cup\Omega^-)},
\end{equation*}
which completes the proof of the theorem.
\qed \end{proof}

\section{Extension to the interface problem with variable coefficients}\label{sec_var}

In this section, we consider the interface problem with variable coefficients, i.e.,
\begin{align*}
\beta(x)=\beta^+(x) ~ \mbox{ if }~ x\in\Omega^+ ~~~\mbox{ and } ~~~\beta(x)=\beta^-(x) ~ \mbox{ if } ~x\in\Omega^-,
\end{align*}
where $\beta^\pm(x)$ are defined in slight larger domains $\Omega^\pm_e:=\Omega^\pm\cup N(\Gamma,\delta_0)$. We assume 
$\beta^i\in C^1(\overline{\Omega_e^i})$, $i=+,-$. Thus, there exist positive constants $\beta_{min}$, $\beta_{max}$ and $C_\beta$ such that 
\begin{equation}\label{new_beta_inequality}
\beta_{min}\leq\beta^\pm(x)\leq\beta_{max}, ~\forall x\in\Omega_e^\pm \quad \mbox{ and }\quad \|\nabla \beta^\pm\|_{\Omega^\pm_e}\leq C_\beta.
\end{equation}

On an interface element $T\in\mathcal{T}_h^\Gamma$, we use some sort of  averages of the coefficients  $\overline{\beta}^+_h$ and $\overline{\beta}^-_h$  over the sub-region such that
\begin{equation}\label{vari_yaoqiu}
\|\overline{\beta}^+_h-\beta^-(x)\|_{L^\infty(T)}\leq Ch ~~\mbox{ and } ~~\|\overline{\beta}^-_h-\beta^-(x)\|_{L^\infty(T)}\leq Ch,
\end{equation}
where the constant $C$ may depend on $\| \beta^+\|_{W^1_\infty(T)}$ and $\| \beta^-\|_{W^1_\infty(T)}$.  For example, we can choose $\overline{\beta}^\pm_h=\beta^\pm(x_m)$ with an arbitrary point $x_m\in T$.


\textbf{Modifications to the parameter free PPIFE method for variable coefficients.}
The coefficients $\beta_h(x)$ in the local lifting operator (\ref{def_lift}) and the method (\ref{ph1})-(\ref{sh}) are replaced by
\begin{equation*}
\beta_h(x)=\beta^+(x) \mbox{ if } x\in \Omega^+_h ~~\mbox{ and }~~ \beta_h(x)=\beta^-(x) \mbox{ if } x\in \Omega^-_h.
\end{equation*}
In the construction of the IFE space, we replace the third equation in (\ref{ifem_modi}) by
\begin{equation}\label{modi_flux}
\overline{\beta}^+_h\nabla \phi^+\cdot \textbf{n}_h=\overline{\beta}^-_h\nabla \phi^-\cdot \textbf{n}_h.
\end{equation}
The constants $\beta^\pm$ in $W_h(T)$ are also replaced by $\overline{\beta}_h^\pm$.
The coefficients (\ref{lift_coet1}) and (\ref{lift_coet2}) for the local lifting operator now are computed from the following,
\begin{equation*}
c_1=\frac{\textbf{t}_{1,h}\cdot \textbf{n}_e \int_e\beta_h \varphi ds}{2\int_{T_1}\beta_hdx},~d_1=\frac{\textbf{n}_{1,h}\cdot \textbf{n}_e\left(\overline{\beta}_h^-\int_{e^+}\beta_h\varphi ds+\overline{\beta}_h^+\int_{e^-}\beta_h\varphi ds\right)}{2(\overline{\beta}_h^-)^2\int_{T_{1,h}^+}\beta_hdx+2(\overline{\beta}_h^+)^2\int_{T_{1,h}^-}\beta_hdx},
\end{equation*}
\begin{equation*}
c_2=\frac{\textbf{t}_{1,h}\cdot \textbf{n}_e \int_e\beta_h \varphi ds}{2\int_{T_2}\beta_hdx},~d_2=\frac{\textbf{n}_{1,h}\cdot \textbf{n}_e\left(\overline{\beta}_h^-\int_{e^+}\beta_h\varphi ds+\overline{\beta}_h^+\int_{e^-}\beta_h\varphi ds\right)}{2(\overline{\beta}_h^-)^2\int_{T_{2,h}^+}\beta_hdx+2(\overline{\beta}_h^+)^2\int_{T_{2,h}^-}\beta_hdx},
\end{equation*}
where $e^\pm=e\cap \Omega_h^\pm$.

\textbf{Modifications to the analysis}.
On  an  interface element $T\in\mathcal{T}_h^\Gamma$, we define a function $\overline{\beta}_h(x)$ such that
\begin{equation*}
\overline{\beta}_h(x)=\overline{\beta}_h^+~~\mbox{ if }~~ x\in T^+_h ~~~~\mbox{ and }~~~~ \overline{\beta}_h(x)=\overline{\beta}_h^- ~\mbox{ if }~ x\in T^-_h .
\end{equation*}

First we consider the modification  to the proof of  Lemma~\ref{lema_trace} for the trace inequality.
From (\ref{modi_flux}), the second equality in (\ref{conti_linear}) becomes
\begin{equation*}
[\overline{\beta}_h\nabla\phi\cdot \textbf{n}_{h}]_{\Gamma_{h}\cap T}=0\quad \mbox{ on }\Gamma_h\cap T.
\end{equation*}
Thus, we only need to replace (\ref{lem1_p2_2}) and (\ref{trace_he}) by
\begin{equation*}
\overline{\beta}_h(x)\nabla w_h\cdot\textbf{n}_h\in H^1(T)\mbox{ and } \beta\nabla v\cdot \textbf{n}-\overline{\beta}_h\nabla w_h\cdot \textbf{n}_h\in H^1(T),
\end{equation*}
and the remaining proof process is the same.

Next we consider modifications  to the proof of Lemma~\ref{chazhi_error} for the interpolation error estimation.
In the construction of  the auxiliary functions, we  change the corresponding identities in (\ref{jum_ji1_cons}) and (\ref{def_psi_linear2}) to
\begin{equation*}
\overline{\beta}^+_h\nabla\Upsilon^+\cdot \textbf{n}_h-\overline{\beta}^-_h\nabla\Upsilon^-\cdot \textbf{n}_h=1,\qquad\overline{\beta}^+_h\nabla\Psi_i^+\cdot \textbf{n}_h=\overline{\beta}^-_h\nabla\Psi_i^-\cdot \textbf{n}_h,\quad i=D,E.
\end{equation*}
Now the result (\ref{lemma4}) in Lemma~\ref{lema_fenjie} and the inequality (\ref{main_prof1}) in Lemma~\ref{chazhi_error} become
\begin{equation*}
\begin{aligned}
I_h^{BK}v-I_h^{{\rm IFE}}v&=[\![ I_h^{BK}v]\!](D)\Psi_D(x)+[\![I_h^{BK}v]\!](E)\Psi_E(x)+[\![\overline{\beta}_h \nabla (I_h^{BK}v)\cdot \textbf{n}]\!](x^*)\Upsilon(x)
\end{aligned}
\end{equation*}
 and 
\begin{equation}\label{vari_1}
|I_h^{BK}v-I_h^{{\rm IFE}}v|^2_{H^m(T_h^+\cup T_h^-)}\leq Ch^{2-2m}\sum_{x_p=D,E}[\![ I_h^{BK}v]\!]^2(x_p)+Ch^{4-2m}[\![\overline{\beta}_h \nabla (I_h^{BK}v)\cdot \textbf{n}]\!]^2(x^*).
\end{equation}
Thus, the estimate (\ref{pro_main4}) in the proof of Lemma~\ref{chazhi_error} needs to be changed as
\begin{equation}\label{vari_2}
\begin{aligned}
|[\![\overline{\beta}_h \nabla& (I_h^{BK}v)\cdot \textbf{n}]\!]^2(x^*)\leq \|[\![\overline{\beta}_h\nabla (I_h^{BK}v)\cdot \textbf{n}_h]\!] \|_{L^\infty(T)}^2 \leq Ch^{-2} \|[\![\overline{\beta}_h \nabla (I_h^{BK}v)\cdot \textbf{n}_h]\!] \|_{L^2(T)}^2\\
&\leq Ch^{-2}  \|[\![\beta \nabla (I_h^{BK}v)\cdot \textbf{n}_h]\!] \|_{L^2(T)}^2+ Ch^{-2} \|[\![(\beta-\overline{\beta}_h) \nabla (I_h^{BK}v)\cdot \textbf{n}_h]\!] \|_{L^2(T)}^2\\
\end{aligned}
\end{equation}
The estimation of the first term on the right-hand side is the same as that in (\ref{pro_main4}). For the second term,  using (\ref{vari_yaoqiu}), we have
\begin{equation}\label{vari_3}
\begin{aligned}
 &h^{-2}\|[\![(\beta-\overline{\beta}_h) \nabla (I_h^{BK}v)\cdot \textbf{n}_h]\!] \|_{L^2(T)}^2\\
&\leq  Ch^{-2}\sum_{i=\pm}\|\beta^i-\overline{\beta}^i_h\|^2_{L^\infty(T)} \| \nabla I_h \mathbb{E}^iv^i\|_{L^2(T)}^2\leq C\sum_{i=\pm}\| \nabla I_h \mathbb{E}^iv^i\|_{L^2(T)}^2\\
&\leq C\sum_{i=\pm}\left(\|\nabla(\mathbb{E}^iv^i-I_h \mathbb{E}^iv^i)\|_{L^2(T)}^2+\|\nabla\mathbb{E}^iv^i\|_{L^2(T)}^2\right)\leq C\sum_{i=\pm}|\mathbb{E}^iv^i|_{H^1(T)}^2.
\end{aligned}
\end{equation}
Combining (\ref{vari_1})-(\ref{vari_3}) and (\ref{pro_main4}), we can see that (\ref{pro_main_ls}) also holds. Using (\ref{new_beta_inequality}),  we find that  (\ref{pro_main_new}) is also correct.  Hence, the optimal interpolation error estimates are not affected and  Lemma~\ref{chazhi_error} is  still  valid.

The other lemmas and theorems can be  adapted to the variable case easily. Note that for Theorem~\ref{theo_mainL2} about  the $L^2$ error estimate, we need  the regularity result  (\ref{reg_L2})  for the problem with variable coefficients which was proved in \cite{2012Uniform}.

\section{Extension to three dimensions}\label{sec_3D}
In this section, we extend our method and analysis to three dimensions. For simplicity, we also assume that $\beta(x)$  is a piecewise constant. 
Let now $\mathcal{T}_h$ be a simplicial triangulation of $\Omega$ and $\mathcal{E}_h$ be all the faces of the mesh $\mathcal{T}_h$. We assume that the triangulation is quasi-uniform.  Different from the 2D cases, the  points of intersection of the interface and the edges of a tetrahedron are usually not coplanar.  The linear approximation of the interface determined by three of these intersection points on each interface element may not be continuous across interface faces (see \cite{kafafy2005three, Guojcp2020}). Thus, we should construct the IFE space according to the exact interface.  However, a discrete interface is also needed for the purpose of analysis.
On an interface element $T\in\mathcal{T}_h^\Gamma$, let $x^*$ be a fixed point on $\Gamma\cap \overline{T}$ and $\Gamma_{h,T}^{ext}$ be the plane which is tangent to $\Gamma$ at $x^*$. Then we  have $\mathbf{n}_h=\mathbf{n}(x^*)$ on this interface element. In practical implementations, we can choose $x^*$ as one of the  intersection points of the interface and the edges of the interface element.
 The discrete interface now is  defined as  $\Gamma_h=\bigcup_{T\in\mathcal{T}_h^\Gamma}\Gamma_{h,T}$ with $\Gamma_{h,T}=\Gamma_{h,T}^{ext}\cap T$. 
As $\Gamma \in C^2$,  similar to the 2D cases, if $h\leq h_0$,  it holds
\begin{equation}\label{rela_3d_nnh2}
\|{\rm dist}(x,\Gamma_{h,T}^{ext})\|_{L^\infty(\Gamma\cap T)}\leq Ch^2,~~\|\mathbf{n}-\mathbf{n}_h\|_{L^\infty(T)}\leq Ch,~~~ \forall T\in \mathcal{T}_h^\Gamma.
\end{equation}

\textbf{Modifications to the parameter free PPIFE method in 3D.}
The local IFE space on an interface element $T$ is then defined by
\begin{equation}\label{def_sh3d}
\begin{aligned}
S_h(T):=\{&\phi\in L^2(T) : \phi|_{T^\pm}=\phi^\pm|_{T^\pm},~ \forall\phi^\pm\in\mathbb{P}_1(T) \mbox{ satisfying }[\![\phi]\!]|_{\Gamma_{h,\Gamma}^{ext}}=0,~~[\![\beta\nabla\phi\cdot\mathbf{n}_h]\!]=0\}.
\end{aligned}
\end{equation}
We note that the condition $[\![\phi]\!]|_{\Gamma_{h,\Gamma}^{ext}}=0$ in the above definition is equivalent to 
\begin{equation}\label{equa3Dth}
[\![\phi]\!](x^*)=0\mbox{ and }[\![\nabla \phi\cdot \mathbf{t}_{i,h}]\!]=0, ~i=1,2,
\end{equation}
where $\mathbf{t}_{1,h}$ and $\mathbf{t}_{2,h}$ are standard basis vectors in the plane $\Gamma_{h,\Gamma}^{ext}$.

\begin{lemma}\label{lem_unique3D}
Let $A_i$, $i=1,2,3,4$ be vertices of an interface element $T\in\mathcal{T}_h^\Gamma$,  $\alpha_{max}$ be the maximum angle of  all faces of  the tetrahedron, and  $\gamma_{max}$ be the maximum dihedral angle of the tetrahedron. If $\alpha_{max}\leq\pi/2$ and $\gamma_{max}\leq\pi/2$, then the function $\phi\in S_h(T)$ defined in (\ref{def_sh3d}) is uniquely determined by $\phi(A_i)$, $i=1,2,3,4$, regardless of the interface location.
\end{lemma}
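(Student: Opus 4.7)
The strategy is to show that the homogeneous nodal problem $\phi(A_i)=0$, $i=1,2,3,4$, admits only $\phi\equiv 0$ under the stated angle hypotheses. Since the four interface-type constraints built into $S_h(T)$ (one from $[\![\phi]\!](x^*)=0$, two from $[\![\nabla\phi\cdot\mathbf{t}_{i,h}]\!]=0$, one from $[\![\beta\nabla\phi\cdot\mathbf{n}_h]\!]=0$) cut the eight-dimensional coefficient space of $(\phi^+,\phi^-)\in\mathbb{P}_1(T)\times\mathbb{P}_1(T)$ down to a four-dimensional subspace, injectivity of the nodal-evaluation map is equivalent to the unique determinability claimed.

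The first step is to parameterize $S_h(T)$. The linear polynomial $\phi^+-\phi^-$ vanishes on the plane $\Gamma_{h,T}^{ext}$ and must therefore equal $\lambda\,\mathbf{n}_h\cdot(x-x^*)$ for some scalar $\lambda$. Combining this with the flux condition gives
\begin{equation*}
\phi^\pm(x)=\tilde\phi(x)+\alpha^\pm\mathbf{n}_h\cdot(x-x^*),\qquad \beta^+\alpha^+=\beta^-\alpha^-,
\end{equation*}
where $\tilde\phi\in\mathbb{P}_1(T)$ satisfies $\nabla\tilde\phi\cdot\mathbf{n}_h=0$. Set $d_i:=\mathbf{n}_h\cdot(A_i-x^*)$ and $V^\pm:=\{i:\pm d_i>0\}$; the homogeneous nodal conditions then read $\tilde\phi(A_i)=-\alpha^\pm d_i$ for $i\in V^\pm$. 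Writing $\tilde\phi=\sum_i\tilde\phi(A_i)\ell_i$ in barycentric coordinates and using the identity $\mathbf{n}_h=\sum_i d_i\nabla\ell_i$ (obtained by differentiating the linear equality $\mathbf{n}_h\cdot(x-x^*)=\sum_i d_i\ell_i(x)$), the side constraint $\nabla\tilde\phi\cdot\mathbf{n}_h=0$ collapses to the single scalar equation
\begin{equation*}
\alpha^-\bigl(\beta^-S^+ +\beta^+S^-\bigr)=0, \qquad S^\pm:=\sum_{i\in V^\pm}d_i(\nabla\ell_i\cdot\mathbf{n}_h).
\end{equation*}

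The heart of the proof is showing $\beta^-S^++\beta^+S^->0$, which is where the angle hypothesis enters. Introducing $\mathbf{v}^\pm:=\sum_{i\in V^\pm}d_i\nabla\ell_i$, one has $\mathbf{v}^++\mathbf{v}^-=\mathbf{n}_h$ and $S^\pm=\mathbf{v}^\pm\cdot\mathbf{n}_h=\|\mathbf{v}^\pm\|^2+\mathbf{v}^+\cdot\mathbf{v}^-$. The key classical fact is that the condition $\gamma_{max}\leq\pi/2$ is equivalent to $\nabla\ell_i\cdot\nabla\ell_j\leq 0$ for every $i\neq j$: each $\nabla\ell_k$ is a positive multiple of the inward unit normal of the face opposite $A_k$, and the dihedral angle along the common edge of two such faces equals $\pi$ minus the angle between their inward normals. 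Since $d_id_j<0$ whenever $i\in V^+,j\in V^-$, it follows that $\mathbf{v}^+\cdot\mathbf{v}^-\geq 0$, hence $S^\pm\geq 0$, and with $S^++S^-=\|\mathbf{n}_h\|^2=1$ we conclude
\begin{equation*}
\beta^-S^++\beta^+S^-\geq\min(\beta^+,\beta^-)(S^++S^-)=\min(\beta^+,\beta^-)>0.
\end{equation*}
Therefore $\alpha^-=0$, so $\alpha^+=0$ and $\tilde\phi(A_i)=0$ for every $i$, giving $\tilde\phi\equiv 0$ and finally $\phi\equiv 0$.

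The main obstacle is the geometric step translating the dihedral-angle bound into the sign property $\nabla\ell_i\cdot\nabla\ell_j\leq 0$ and exploiting the sign pattern $d_id_j<0$ across $V^+\times V^-$ to dispose of the cross term $\mathbf{v}^+\cdot\mathbf{v}^-$; a subsidiary care is needed only at configurations where some $d_i=0$, which can be absorbed into $V^+$ or $V^-$ without changing any of the inequalities above. The face-angle hypothesis $\alpha_{max}\leq\pi/2$ does not enter the route I describe; I would expect the appendix's proof either to carry out a direct case analysis on the cut topology ($|V^+|\in\{1,2,3\}$), in which the face-angle condition controls the induced 2D patterns on each face of $T$, or to construct the nodal basis explicitly and deduce unique solvability from a determinant-positivity argument that uses both angle conditions.
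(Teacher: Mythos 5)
Your argument is correct, and it reaches the paper's key inequality by a genuinely different route. The paper's proof in Appendix~\ref{pro_lem_unique3D} represents every $\phi\in S_h(T)$ through the nodal formula (\ref{basis_3D}) with the auxiliary function $w$ of (\ref{def_w3D}), so that unique solvability hinges on the bound $0\le\nabla I_{h,T}w\cdot\mathbf{n}_h\le 1$ of (\ref{app1_013D}); that bound is then proved geometrically, by distinguishing whether $\Gamma_{h,T}^{ext}$ cuts three or four edges, projecting onto a plane perpendicular to $\Gamma_{h,T}^{ext}$ and falling back on the 2D Lemma~\ref{lem_unique}, and both hypotheses $\alpha_{max}\le\pi/2$ and $\gamma_{max}\le\pi/2$ enter that dihedral-angle chase. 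Your normal/tangential splitting $\phi^{\pm}=\tilde\phi+\alpha^{\pm}\mathbf{n}_h\cdot(x-x^*)$ arrives at the same scalar solvability condition: with $\lambda_i$ the barycentric coordinates, your $S^{+}=\sum_{i\in V^{+}}d_i\,\nabla\lambda_i\cdot\mathbf{n}_h$ coincides with $\nabla I_{h,T}w\cdot\mathbf{n}_h$, and your statement $S^{\pm}\ge 0$, $S^{+}+S^{-}=1$ is precisely (\ref{app1_013D}); the difference is that you obtain it algebraically from the classical non-obtuseness property $\nabla\lambda_i\cdot\nabla\lambda_j\le 0$ for $i\ne j$ (equivalent to $\gamma_{max}\le\pi/2$) combined with the sign pattern $d_id_j<0$ across the two vertex groups. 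This buys a shorter, case-free proof that needs only the dihedral condition (no inconsistency with the lemma: spherical trigonometry at each vertex shows $\gamma_{max}\le\pi/2$ already forces $\alpha_{max}\le\pi/2$), while the paper's route has the side benefit of producing the explicit representation (\ref{basis_3D}), which is reused in the later appendix lemmas. Two small points to tidy up if you write this out: verify (one line) that the four jump constraints are independent, so that $\dim S_h(T)=4$ and injectivity of the nodal map indeed yields unique determination; and note that, exactly as in the paper's own proof, you identify the side of a vertex relative to the tangent plane (the sign of $d_i$) with its side relative to $\Gamma$, which is what decides which polynomial piece the nodal value $\phi(A_i)$ sees — your remark about $d_i=0$ covers only the degenerate case of a vertex lying on the plane.
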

\begin{proof}:
See  Appendix \ref{pro_lem_unique3D}. \qed
\end{proof}

We emphasize that, different from the  2D cases, the discrete interface $\Gamma_h$ now is not continuous.  
If the IFE space is defined according to the discrete interface $\Gamma_h$, then the IFE functions are not well defined on interface faces due to the discontinuity of $\Gamma_h$.  The same issue exists for $\beta_h(x)$ defined in (\ref{def_bth}).

We replace the coefficient $\beta_h(x)$ in the definition of the local lifting operator (\ref{def_lift}) and (\ref{ph1})-(\ref{sh}) of the algorithm by the exact coefficient $\beta(x)$. We also replace the constant $4$ in the bilinear form $s_h(\cdot,\cdot)$ (see (\ref{sh})) by $8$ to ensure the the coercivity (\ref{Coercivity}) since each interface element is now calculated at most four times when estimating the integrals in the left-hand side of  (\ref{pro_coer2}). Although the IFE function $v_h$ is discontinuous across the interface, we also use the notation $\nabla v_h$ for the piecewise gradient for simplicity.

\textbf{Modifications to the analysis in 3D}.
On an interface element with vertices $A_i$, $i=1,2,3,4$, we define the auxiliary functions $\Psi(x)$, $\Upsilon(x)$ and $\Theta_i$, $i=1,2$ as
\begin{equation}
\Psi|_{T^\pm}=\Psi^\pm|_{T^\pm},~ \Psi^\pm\in \mathbb{P}_1(T),~~\Upsilon|_{T^\pm}=\Upsilon^\pm|_{T^\pm},~ \Upsilon^\pm\in \mathbb{P}_1(T),~~\Theta_i|_{T^\pm}=\Theta_i^\pm|_{T^\pm},~ \Theta_i^\pm\in \mathbb{P}_1(T),
\end{equation}
such that
$$
\Psi(A_j)=\Upsilon(A_j)=\Theta_i(A_j)=0,~j=1,2,3,4,
$$
and
\begin{equation}
\begin{aligned}
&[\![\Psi]\!](x^*)=1,~~&&[\![\beta\nabla\Psi\cdot \textbf{n}_h]\!]=0,~~&&[\![\nabla\Psi\cdot \textbf{t}_{j,h}]\!]=0,~&& j=1,2,\\
&[\![\Upsilon]\!](x^*)=0,~~&&[\![\beta\nabla\Upsilon\cdot \textbf{n}_h]\!]=1,~~&&[\![\nabla\Upsilon\cdot \textbf{t}_{j,h}]\!]=0,~ &&j=1,2,\\
&[\![\Theta_i]\!](x^*)=0,~~&&[\![\beta\nabla\Theta_i\cdot \textbf{n}_h]\!]=0,~~&&[\![\nabla\Theta_i\cdot \textbf{t}_{j,h}]\!]=\delta_{ij},~ &&j=1,2.
\end{aligned}
\end{equation}

\begin{lemma}\label{lem_jiest3D}
Under the conditions of Lemma~\ref{lem_unique3D}, the auxiliary functions defined above satisfy the following estimates:
\begin{equation*}
|\Psi|^2_{H^m(T^+\cup T^-)}\leq Ch^{3-2m}, ~|\Upsilon|^2_{H^m(T^+\cup T^-)}\leq Ch^{5-2m}, ~|\Theta_i|^2_{H^m(T^+\cup T^-)}\leq Ch^{5-2m}, ~i=1,2,~m=0,1,
\end{equation*}
where the constant $C$ is independent of $h$ and the interface location relative to the mesh.
\end{lemma}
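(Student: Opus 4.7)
The plan is to extend the argument of Lemma~\ref{lem_jiest} from triangles to tetrahedra, with scaling indices shifted by one to reflect $|T|\sim h^{3}$. The key observation is that all three auxiliary functions admit explicit jump representations. Because $[\![\nabla\Psi\cdot\mathbf{t}_{j,h}]\!]=0$ for $j=1,2$, the gradient of the affine function $\Psi^{+}-\Psi^{-}$ is parallel to $\mathbf{n}_{h}$, so
\begin{equation*}
[\![\Psi]\!](x) = 1 + \gamma_{\Psi}\,(x-x^{*})\cdot\mathbf{n}_{h},\qquad [\![\Upsilon]\!](x) = \gamma_{\Upsilon}\,(x-x^{*})\cdot\mathbf{n}_{h},
\end{equation*}
\begin{equation*}
[\![\Theta_{i}]\!](x) = (x-x^{*})\cdot\mathbf{t}_{i,h} + \eta_{i}\,(x-x^{*})\cdot\mathbf{n}_{h},\quad i=1,2,
\end{equation*}
for scalars $\gamma_{\Psi},\gamma_{\Upsilon},\eta_{1},\eta_{2}$. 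Since $|x-x^{*}|\leq Ch$ on $T$, these jumps are pointwise of order $1+|\gamma_{\Psi}|$ for $\Psi$, of order $h\,|\gamma_{\Upsilon}|$ for $\Upsilon$, and of order $h(1+|\eta_{i}|)$ for $\Theta_{i}$.

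Next I would establish uniform bounds on the scalars $\gamma_{\Psi},\gamma_{\Upsilon},\eta_{i}$. Each auxiliary function is determined by an $8\times 8$ linear system coupling the four vertex conditions (one per vertex, using whichever side of $\Gamma_{h,T}^{ext}$ contains it) with the four jump conditions (value at $x^{*}$, flux, and two tangential derivatives). Lemma~\ref{lem_unique3D} supplies unique solvability; to obtain uniformity in the interface location I would pull back to a fixed reference tetrahedron and view the system matrix as a continuous function of the position of $x^{*}$ in $\overline{T}$, the orthonormal frame $\{\mathbf{n}_{h},\mathbf{t}_{1,h},\mathbf{t}_{2,h}\}$, and $\beta^{\pm}$. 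The angle conditions in Lemma~\ref{lem_unique3D} confine these parameters to a compact set on which the determinant is bounded away from zero, yielding $|\gamma_{\Psi}|+|\gamma_{\Upsilon}|+|\eta_{1}|+|\eta_{2}|\leq C$.

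In the final step I would translate the pointwise jump bounds into the target $H^{m}$ estimates. For $\Psi$, since $\Psi(A_{j})=0$ at each vertex and $|[\![\Psi]\!](A_{j})|\leq C$, the affine function $\Psi^{\pm}$ has nodal values of order $1$ on the shape-regular tetrahedron $T$, hence $\|\Psi^{\pm}\|_{L^{\infty}(T)}\leq C$ and $\|\nabla\Psi^{\pm}\|_{L^{\infty}(T)}\leq Ch^{-1}$; integrating over $T^{\pm}\subset T$ yields $\|\Psi\|_{L^{2}(T)}^{2}\leq Ch^{3}$ and $|\Psi|_{H^{1}(T^{+}\cup T^{-})}^{2}\leq Ch$. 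The same reasoning applied to $\Upsilon$ and $\Theta_{i}$, whose jumps are of size $h$ rather than $1$, produces an extra factor $h^{2}$ throughout: $\|\Upsilon^{\pm}\|_{L^{\infty}(T)},\|\Theta_{i}^{\pm}\|_{L^{\infty}(T)}\leq Ch$ and $\|\nabla\Upsilon^{\pm}\|_{L^{\infty}(T)},\|\nabla\Theta_{i}^{\pm}\|_{L^{\infty}(T)}\leq C$, giving $|\Upsilon|_{H^{m}(T^{+}\cup T^{-})}^{2},|\Theta_{i}|_{H^{m}(T^{+}\cup T^{-})}^{2}\leq Ch^{5-2m}$.

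The main obstacle is the uniformity of the bounds on $\gamma_{\Psi},\gamma_{\Upsilon},\eta_{i}$ with respect to the interface location: Lemma~\ref{lem_unique3D} supplies invertibility but not a quantitative estimate on the inverse, and a priori the condition number of the $8\times 8$ system could degenerate as the interface slides toward a face or vertex of $T$. Showing that the angle conditions truly exclude such degeneration---so that the compactness argument above delivers a bound independent of the cut configuration---is the most delicate part of the proof and where the bulk of the appendix effort would be spent.
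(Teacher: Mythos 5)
Your reduction of the problem to bounding the four scalars $\gamma_{\Psi},\gamma_{\Upsilon},\eta_{1},\eta_{2}$ is sound, and your conversion of such bounds into the stated $H^{m}$ estimates (nodal values of both affine pieces controlled by the jump, then $L^{\infty}$ and inverse-type scaling on a shape-regular tetrahedron with $|T|\sim h^{3}$) is correct. The genuine gap is exactly the step you defer: the uniform bound $|\gamma_{\Psi}|+|\gamma_{\Upsilon}|+|\eta_{1}|+|\eta_{2}|\leq C$ independent of the cut configuration is the whole content of the lemma, and your compactness argument as sketched does not deliver it. Lemma~\ref{lem_unique3D} gives only qualitative invertibility of the $8\times 8$ system at each admissible configuration; to run compactness you would need the determinant to be continuous and nonvanishing on the \emph{closure} of the configuration set, which contains degenerate limits (the plane $\Gamma_{h,T}^{ext}$ passing through a vertex or an edge, one subelement of vanishing volume). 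At such limits the assembled system changes discontinuously, because the vertex condition switches from constraining $\phi^{+}$ to constraining $\phi^{-}$ while the prescribed jump on the plane is nonzero (e.g.\ identically $1$ for $\Psi$), so continuity of the determinant in your parameters is not automatic and nonvanishing at the limit points is not covered by the uniqueness lemma. Without resolving this, the constant $C$ could a priori blow up as the interface slides toward a face or vertex, which is precisely what the lemma forbids.

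The paper sidesteps this entirely by making the solvability quantitative and explicit: each auxiliary function is written as $z-I_{h,T}^{\rm IFE}z$ with an explicit $z$ (equal to $1$, $\beta^{+,-1}(x-x^{*})\cdot\mathbf{n}_{h}$, or $(x-x^{*})\cdot\mathbf{t}_{i,h}$ on $T^{+}$ and $0$ on $T^{-}$), and the local IFE interpolant is controlled through the closed-form basis representation (\ref{basis_3D}), whose denominator $1+(\beta^{-}/\beta^{+}-1)\nabla I_{h,T}w\cdot\mathbf{n}_{h}$ is bounded below by $\min\{1,\beta^{-}/\beta^{+}\}$ because of the key estimate $0\leq \nabla I_{h,T}w\cdot\mathbf{n}_{h}\leq 1$ in (\ref{app1_013D}) (this is where the angle conditions enter, already in the proof of Lemma~\ref{lem_unique3D}). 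This yields $|\phi_{A_i}|_{W^{m}_{\infty}(T^{+}\cup T^{-})}\leq Ch^{-m}$ uniformly in the interface location, after which the three estimates follow by the same scaling you use, exactly as in the 2D Lemma~\ref{lem_jiest}. If you want to salvage your route, the cleanest fix is not compactness but this same explicit formula: it expresses the normal-derivative jumps (your $\gamma$'s and $\eta$'s) with the uniformly bounded-below denominator, giving the quantitative estimate your argument is missing.
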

\begin{proof}:
See  Appendix \ref{pro_lem_jiest3D}. \qed
\end{proof}

Since the IFE space and the auxiliary functions are defined according to the exact interface, we replace $T_h^+$ and $T_h^-$ by $T^+$ and $T^-$ in the definition of the operator $I_h^{BK}$ in (\ref{ehpm}), i.e.,
\begin{equation}
(I_h^{BK}v)|_{T^i}=I_h\mathbb{E}^i v^i, \quad i=+,-, \quad\forall T\in\mathcal{T}_h^\Gamma, \quad \forall v\in\widetilde{H}^2(\Omega).
\end{equation}
 Similar to Lemma~\ref{lema_fenjie}, we have
 the following identity on interface elements,
\begin{equation}\label{decomp_3D}
I_h^{BK}v-I_h^{{\rm IFE}}v=[\![ I_h^{BK}v]\!](x^*)\Psi(x)+[\![\beta \nabla (I_h^{BK}v)\cdot \textbf{n}_h]\!]\Upsilon(x)+\sum_{i=1,2}[\![\beta \nabla (I_h^{BK}v)\cdot \textbf{t}_{i,h}]\!]\Theta_i(x).
\end{equation}
We emphasize that the relation (\ref{equa3Dth}) has been utilized to prove the above decomposition. The optimal interpolation error estimates are proved in the following theorem.
\begin{theorem}\label{chazhi_error3D}
For any $v\in  \widetilde{H}^2(\Omega)$, under the conditions of Lemma~\ref{lem_unique3D}, there exists a constant $C$ independent of $h$ and the interface location relative to the mesh such that
\begin{equation*}
\sum_{T\in\mathcal{T}_h}|v-I_h^{{\rm IFE}}v|^2_{H^m(T^+\cup T^-)}\leq Ch^{4-2m}\|v\|^2_{H^2(\Omega^+\cup\Omega^-)},\quad m=0,1.
\end{equation*}
\end{theorem}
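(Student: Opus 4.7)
Plan: follow the template of Lemma~\ref{chazhi_error} and Theorem~\ref{theorem_interpolation}, streamlined by the fact that in 3D the IFE space is defined directly on the exact subregions $T^\pm$, so no $T^\triangle$-mismatch appears. On non-interface elements the bound is classical; on an interface element $T\in\mathcal{T}_h^\Gamma$ I apply the triangle inequality
\[
|v-I_h^{{\rm IFE}}v|_{H^m(T^+\cup T^-)}\le|v-I_h^{BK}v|_{H^m(T^+\cup T^-)}+|I_h^{BK}v-I_h^{{\rm IFE}}v|_{H^m(T^+\cup T^-)}.
\]
Since $(I_h^{BK}v)|_{T^\pm}=I_h\mathbb{E}^\pm v^\pm$ on $T^\pm$ and $\mathbb{E}^\pm v^\pm|_{T^\pm}=v|_{T^\pm}$, the first term is the standard nodal interpolation error for $\mathbb{E}^\pm v^\pm\in H^2(T)$ and is bounded by $Ch^{2-m}\|v\|_{H^2(\Omega^+\cup\Omega^-)}$ via Bramble--Hilbert and Lemma~\ref{lem_ext}.

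For the IFE-specific second term I invoke the decomposition (\ref{decomp_3D}) and Lemma~\ref{lem_jiest3D} to reduce the $H^m$ estimate to controlling three scalar jump quantities $[\![I_h^{BK}v]\!](x^*)$, $[\![\beta\nabla I_h^{BK}v\cdot\mathbf{n}_h]\!]$, and $[\![\beta\nabla I_h^{BK}v\cdot\mathbf{t}_{i,h}]\!]$ ($i=1,2$). Since $x^*\in\Gamma$ and $[v]_\Gamma=0$, subtracting $v$ and applying the 3D $L^\infty$-interpolation bound $\|w-I_hw\|_{L^\infty(T)}\le Ch^{1/2}|w|_{H^2(T)}$ (Bramble--Hilbert combined with the Sobolev embedding $H^2(T)\hookrightarrow C^0(\overline T)$) gives $|[\![I_h^{BK}v]\!](x^*)|^2\le Ch\sum_{i=\pm}|\mathbb{E}^iv^i|^2_{H^2(T)}$, which multiplied by the $Ch^{3-2m}$ factor from Lemma~\ref{lem_jiest3D} supplies an $h^{4-2m}$ contribution.

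The normal- and tangential-flux jumps are piecewise constant on $T$, so the 3D scaling inequality $|c|^2\le Ch^{-3}\|c\|^2_{L^2(T)}$ applies. After inserting $\pm\beta\nabla v\cdot\mathbf{n}_h$ (resp.\ $\pm\beta\nabla v\cdot\mathbf{t}_{i,h}$) to trade the discrete gradients for $\nabla v$ at the cost of $O(h)$ interpolation errors, and absorbing the $O(h)$ frame mismatch through the 3D analog of (\ref{error_nt}) provided by (\ref{rela_3d_nnh2}), matters reduce to the sums $\sum_T\|[\![\beta\nabla v\cdot\mathbf{n}]\!]\|^2_{L^2(T)}$ and $\sum_T\|[\![\beta\nabla v\cdot\mathbf{t}_i]\!]\|^2_{L^2(T)}$. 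Both integrands vanish on $\Gamma$: the first by the definition of $\widetilde H^2(\Omega)$, the second because continuity of $v|_\Gamma$ forces equality of the intrinsic surface gradients of $v^\pm$ along any direction tangent to $\Gamma$. Hence Lemma~\ref{strip} applied in $N(\Gamma,h)$ contributes a factor $h^2$, which combined with $h^{5-2m}\cdot h^{-3}=h^{2-2m}$ again yields $h^{4-2m}$.

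The principal new difficulty relative to the 2D proof is the presence of the tangential-flux terms. Their treatment requires a $C^1$ extension of the surface frame $\mathbf{t}_i(x)$ from $\Gamma$ into the tubular neighborhood $N(\Gamma,\delta_0)$ so that an analog of (\ref{nt_smooth}) holds in 3D, together with a rigorous verification that $[\![\nabla v\cdot\mathbf{t}_i]\!]_\Gamma=0$ for every $v\in\widetilde H^2(\Omega)$. Once these two ingredients are secured, the remaining estimates follow the 2D template verbatim, and no regularity beyond the piecewise $H^2$ hypothesis is required.
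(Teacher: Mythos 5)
Your outline reproduces the paper's skeleton faithfully: triangle inequality against $I_h^{BK}v$, the decomposition (\ref{decomp_3D}) together with Lemma~\ref{lem_jiest3D}, the $L^\infty$-interpolation bound $Ch^{1/2}|{\cdot}|_{H^2(T)}$ at $x^*$, the $h^{-3}$ scaling for the constant flux jumps, insertion of $\pm\beta\nabla v\cdot\mathbf{n}_h$ with the $O(h)$ normal mismatch from (\ref{rela_3d_nnh2}), and the $\delta$-strip Lemma~\ref{strip} on the terms that vanish on $\Gamma$. Up to the tangential-jump step, this is the same argument with the same exponents.

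The gap is precisely at the point you defer: you state that the proof \emph{requires} a $C^1$ extension of a surface tangent frame $\mathbf{t}_1,\mathbf{t}_2$ from $\Gamma$ into $N(\Gamma,\delta_0)$ and a verification that $[\![\nabla v\cdot\mathbf{t}_i]\!]_\Gamma=0$, and then declare the rest ``verbatim'' without supplying either ingredient. As stated, the first ingredient is not available: a continuous nonvanishing tangent frame does not exist globally on a closed $C^2$ surface (e.g.\ a sphere), so the 3D analog of (\ref{nt_smooth}) for a frame $\mathbf{t}_i(x)$ cannot be ``secured'' in the form you invoke, and a per-element local frame would still leave you to prove uniformity of the constants and membership of $[\![\nabla v\cdot\mathbf{t}_i]\!]$ in $H^1(N(\Gamma,h))$ with vanishing trace, which is exactly the content that is missing. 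The paper circumvents the frame entirely: it works with the projection-based tangential gradients $\nabla_\Gamma z=\nabla z-(\mathbf{n}\cdot\nabla z)\mathbf{n}$ and $\nabla_{\Gamma_h}z=\nabla z-(\mathbf{n}_h\cdot\nabla z)\mathbf{n}_h$, which are well defined in the tube because $\mathbf{n}=\nabla\rho\in (C^1(N(\Gamma,\delta_0)))^3$, uses the elementary inequalities (\ref{rela_grad_gamma}) ($|\nabla z\cdot\mathbf{t}_{i,h}|\le|\nabla_{\Gamma_h}z|$ and $\|\nabla_{\Gamma_h}z-\nabla_\Gamma z\|_{L^2(T)}\le C\|\mathbf{n}-\mathbf{n}_h\|_{L^\infty(T)}\|\nabla z\|_{L^2(T)}$), and then exploits that $[\![\nabla_\Gamma v]\!]\in (H^1(N(\Gamma,\delta_0)))^3$ with $[\![\nabla_\Gamma v]\!]|_\Gamma=\mathbf{0}$ so that Lemma~\ref{strip} applies, as in (\ref{inter_pro4_3D}). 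To close your argument you should replace the tangent-frame extension by this frame-free quantity (or an equivalent device); with that substitution the rest of your estimates go through and match the paper's.
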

\begin{proof}:
The proof is along the same lines as that of Lemma~\ref{chazhi_error}. It suffices to consider an interface element $T\in\mathcal{T}_h^\Gamma$.
The triangle inequality leads to 
\begin{equation*}
|v-I_h^{{\rm IFE}}v|_{H^m(T^+\cup T^-)}\leq |v-I_h^{BK}v|_{H^m(T^+\cup T^-)}+|I_h^{BK}v-I_h^{{\rm IFE}}v|_{H^m(T^+\cup T^-)}.
\end{equation*}
The estimate of the first term is similar to (\ref{main_prof12}). For the second term, using (\ref{decomp_3D}) and  Lemma~\ref{lem_jiest3D} we have
\begin{equation}\label{inter_pro1_3D}
\begin{aligned}
&|I_h^{BK}v-I_h^{{\rm IFE}}v|^2_{H^m(T^+\cup T^-)}\\
&\quad\leq Ch^{3-2m}[\![ I_h^{BK}v]\!]^2(x^*)+Ch^{5-2m}[\![\beta \nabla (I_h^{BK}v)\cdot \textbf{n}_h]\!]^2+Ch^{5-2m}\sum_{i=1,2}[\![\nabla (I_h^{BK}v)\cdot \textbf{t}_{i,h}]\!]^2.
\end{aligned}
\end{equation}
Since  $[\![v]\!]^2(x^*)=0$, it holds 
\begin{equation}\label{inter_pro2_3D}
\begin{aligned}
[\![ I_h^{BK}v]\!]^2(x^*)=&[\![ I_h^{BK}v-v]\!]^2(x^*)\leq C\|[\![I_h^{BK}v-v]\!]\|_{L^\infty(T)}^2\\
&\leq C\sum_{i=\pm}\|I_h\mathbb{E}^iv^i-\mathbb{E}^iv^i\|^2_{L^\infty(T)}\leq Ch\sum_{i=\pm}|\mathbb{E}^iv^i|^2_{H^2(T)}.
\end{aligned}
\end{equation}
Similar to (\ref{pro_main4}), using the second inequality in  (\ref{rela_3d_nnh2}) we have
\begin{equation}\label{inter_pro3_3D}
[\![\beta \nabla (I_h^{BK}v)\cdot \textbf{n}_h]\!]^2\leq Ch^{-1}\sum_{i=\pm}\left(|\mathbb{E}^iv^i|^2_{H^2(T)}+|\mathbb{E}^iv^i|^2_{H^1(T)}\right) +Ch^{-3}\left\|[\![\beta \nabla v\cdot \textbf{n}]\!]\right\|_{L^2(T)}^2.
\end{equation}
To estimate the third term on the right-hand side of (\ref{inter_pro1_3D}), we need tangential gradients $\nabla_\Gamma$ and $\nabla_{\Gamma_h}$ which are defined by
\begin{equation*}
(\nabla_\Gamma z)(x):=\nabla z-(\mathbf{n}\cdot\nabla z)\mathbf{n}, \quad(\nabla_{\Gamma_h} z)(x):=\nabla z-(\mathbf{n}_h\cdot\nabla z)\mathbf{n}_h,  ~ \forall x\in T, ~ \forall z\in H^1(T), ~\forall T\in\mathcal{T}_h^\Gamma.
\end{equation*}
By definition, it holds 
\begin{equation}\label{rela_grad_gamma}
\begin{aligned}
&|\nabla z\cdot\mathbf{t}_{i,h}|\leq |\nabla_{\Gamma_h} z|,~~~|\nabla_{\Gamma_h}z|\leq |\nabla z|,\\
&\|\nabla_{\Gamma_h}z-\nabla_{\Gamma}z\|_{L^2(T)} =\|(\mathbf{n}\cdot \nabla z)\mathbf{n}-(\mathbf{n}_h\cdot \nabla z)\mathbf{n}_h\|_{L^2(T)} \\
&~~~\qquad\qquad\qquad\qquad=\|(\mathbf{n}\cdot \nabla z)(\mathbf{n}-\mathbf{n}_h)-((\mathbf{n}-\mathbf{n}_h)\cdot \nabla z)\mathbf{n}_h\|_{L^2(T)} \\
&~~~\qquad\qquad\qquad\qquad\leq C\|\mathbf{n}-\mathbf{n}_h\|_{L^\infty(T)}\|\nabla z\|_{L^2(T)}.
\end{aligned}
\end{equation}
Using the above inequalities and the second inequality in  (\ref{rela_3d_nnh2}) we  derive 
\begin{equation}\label{inter_pro4_3D}
\begin{aligned}
[\![\nabla (I_h^{BK}v)&\cdot \textbf{t}_{i,h}]\!]^2\leq [\![ \nabla_{\Gamma_h} (I_h^{BK}v)]\!]^2\leq Ch^{-3}\|[\![ \nabla_{\Gamma_h} (I_h^{BK} v )]\!]\|^2_{L^2(T)}\\
&\leq Ch^{-3}\|[\![ \nabla_{\Gamma_h} (I_h^{BK} v -v)+(\nabla_{\Gamma_h}-\nabla_{\Gamma}+\nabla_{\Gamma})v^\pm]\!]\|^2_{L^2(T)}\\
&\leq Ch^{-3}\left( \|[\![ \nabla_{\Gamma_h} (I_h^{BK} v-v)]\!]\|^2_{L^2(T)}+\|[\![\nabla_{\Gamma_h}v-\nabla_{\Gamma}v]\!]\|^2_{L^2(T)}+\|[\![\nabla_{\Gamma}v]\!]\|^2_{L^2(T)}\right)\\
&\leq Ch^{-3}\left( \|[\![ \nabla (I_h^{BK} v-v)]\!]\|^2_{L^2(T)}+\|\mathbf{n}-\mathbf{n}_h\|^2_{L^\infty(T)}\|[\![\nabla v]\!]\|^2_{L^2(T)}+\|[\![\nabla_{\Gamma}v]\!]\|^2_{L^2(T)}\right)\\
&\leq Ch^{-1}\sum_{i=\pm}\left(|\mathbb{E}^iv^i|^2_{H^2(T)}+|\mathbb{E}^iv^i|^2_{H^1(T)}\right) +Ch^{-3}\left\|[\![ \nabla_{\Gamma} v]\!]\right\|_{L^2(T)}^2.
\end{aligned}
\end{equation}
Substituting (\ref{inter_pro2_3D}), (\ref{inter_pro3_3D}) and (\ref{inter_pro4_3D}) into (\ref{inter_pro1_3D}) yields  
\begin{equation*}
|I_h^{BK}v-I_h^{{\rm IFE}}v|^2_{H^m(T^+\cup T^-)}\leq Ch^{4-2m}\sum_{i=\pm}\|\mathbb{E}^iv^i\|^2_{H^2(T)} +Ch^{2-2m}\left(\left\|[\![\beta \nabla v\cdot \textbf{n}]\!]\right\|_{L^2(T)}^2+\left\|[\![\nabla_\Gamma v]\!]\right\|_{L^2(T)}^2\right).
\end{equation*}
Using the facts $[\![\nabla_\Gamma v]\!]\in \left(H^1(N(\Gamma,\delta_0))\right)^3$ and  $[\![\nabla_\Gamma v]\!]|_{\Gamma}=\mathbf{0}$, the remaining proof is the same as that of Lemma~\ref{chazhi_error} since Lemma~\ref{strip} also holds for the 3D cases.\qed
\end{proof}

The trace inequality (\ref{le1}) in Lemma~\ref{lema_trace} also holds.  Note that for any $z\in H^1(T)$, we have 
\begin{equation}\label{decomp_nhth}
\nabla z=(\nabla_{\Gamma_h} z)+(\mathbf{n}_h\cdot\nabla z)\mathbf{n}_h.
\end{equation}
Then,  the split (\ref{lem_tra}) in the proof is changed to
\begin{equation}
\begin{aligned}
\|\nabla(v-w_h)\|_{L^2(\partial T)}&\leq C \|\nabla_{\Gamma_h} (v-w_h)\|_{L^2(\partial T)}+C\|\beta \nabla(v-w_h)\cdot\textbf{n}_h\|_{L^2(\partial T)}\\
&\leq C\left(\|\nabla_\Gamma v-\nabla_{\Gamma_h} w_h\|_{L^2(\partial T)}+\|\nabla_\Gamma v-\nabla_{\Gamma_h} v\|_{L^2(\partial T)}\right.\\
&\quad + \left.\|\beta\nabla v\cdot \textbf{n}-\beta\nabla w_h\cdot \textbf{n}_h\|_{L^2(\partial T)}+\|\beta\nabla v\cdot(\textbf{n}-\textbf{n}_h)\|_{L^2(\partial T)}\right).
\end{aligned}
\end{equation}
The  remaining proof of Lemma~\ref{lema_trace} can be easily adapted to the 3D cases  using (\ref{rela_grad_gamma}) and the fact that $\beta\nabla w_h\cdot \mathbf{n}_h$ and $\nabla_{\Gamma_h} w_h$ are constants.

Since the IFE space is defined according to the exact interface and the exact coefficient $\beta(x)$ is used in the IFE method,  the analysis for the IFE method in subsection~\ref{sub_sec_ana_IFE} can be adapted to the 3D cases if  we replace $T_h^\pm$ and $\beta_h$ by $T^\pm$ and $\beta^\pm$, respectively.
Note that, we only need to take into consideration of the discontinuity of IFE functions across the interface $\Gamma$. 

The discontinuity of IFE functions on $\Gamma$ causes two issues. The first one is that the trace inequality (\ref{pro_int1}) in the proof of Lemma~\ref{ener_app} does not hold since $(v-I_h^{{\rm IFE}}v)|_{T}$  does not belong to $H^1(T)$ now.
To overcome this issue,  we define an operator $\widehat{I_{h}^{{\rm IFE}}}$ by
\begin{equation}\label{def_hatIFE}
(\widehat{I_{h}^{{\rm IFE}}}v)|_T=\left\{
\begin{aligned}
(I_{h}^{{\rm IFE}}v)^+\quad \mbox{ in } T_h^+,\\
(I_{h}^{{\rm IFE}}v)^-\quad \mbox{ in } T_h^-,
\end{aligned}
\right.
\end{equation}
where $T_h^+$ and $T_h^-$ are subdomains of $T$ divided by the plane $\Gamma_{h}^{ext}$.  It is obvious that $(\widehat{I_{h}^{{\rm IFE}}}v)|_T\in C^0(T)$.
We have the following lemma whose proof is given  in Appendix~\ref{pro_lem_mish3D}
\begin{lemma}\label{lem_mish3D}
Let $T\in\mathcal{T}_h^\Gamma$ be an interface element and $e\in\mathcal{E}_h^\Gamma$ be one of its interface faces. For any $v\in  \widetilde{H}^2(\Omega)$, there exists a constant $C$ independent of $h$ and the interface location relative to the mesh  such that 
\begin{equation}\label{dis}
\begin{aligned}
&|\widehat{I_{h}^{{\rm IFE}}}v-I_{h}^{{\rm IFE}}v|_{H^m(T)}\leq Ch^{5/2-2m}| I_{h}^{{\rm IFE}}v|_{H^1(T)},\\
&\|(\widehat{I_{h}^{{\rm IFE}}}v-I_{h}^{{\rm IFE}}v)|_T\|_{L^2(e)}\leq  Ch^{3/2}| I_{h}^{{\rm IFE}}v|_{H^1(T)}.
\end{aligned}
\end{equation}
\end{lemma}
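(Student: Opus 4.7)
The plan is to identify the difference $\delta := \widehat{I_{h}^{{\rm IFE}}}v - I_{h}^{{\rm IFE}}v$ as a scalar multiple of the IFE jump, localized to the thin mismatch region $T^\triangle$, and then to combine the geometric smallness of $T^\triangle$ with an IFE-specific inverse inequality for that jump.

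First I would note that off $T^\triangle$ the sets $T^\pm$ and $T_h^\pm$ coincide, so both $\widehat{I_{h}^{{\rm IFE}}}v$ and $I_{h}^{{\rm IFE}}v$ reduce to the same linear piece and $\delta\equiv 0$; on each of the two components $T_h^+\setminus T^+$ and $T_h^-\setminus T^-$ of $T^\triangle$, the two interpolants disagree exactly by swapping the two linear extensions, so $\delta=\pm[\![I_{h}^{{\rm IFE}}v]\!]$. Next, since $I_{h}^{{\rm IFE}}v\in S_h(T)$, the equivalent conditions (\ref{equa3Dth}) force $[\![I_{h}^{{\rm IFE}}v]\!]$ to vanish at $x^*$ and to have gradient parallel to $\mathbf{n}_h$, so this linear function takes the explicit form
\[ [\![I_{h}^{{\rm IFE}}v]\!](x) = \alpha\,(x-x^*)\cdot \mathbf{n}_h, \qquad |\alpha|=|\nabla[\![I_{h}^{{\rm IFE}}v]\!]|. \]

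Two ingredients then close the estimate. The geometric input combines the second bound in (\ref{rela_3d_nnh2}) with the slab structure of $T^\triangle$: every $x\in T^\triangle$ lies between $\Gamma$ and $\Gamma_{h,T}^{ext}$ so $|(x-x^*)\cdot\mathbf{n}_h|\leq Ch^2$, while the surface area of $\Gamma\cap T$ is $O(h^2)$ and the length of $\Gamma\cap e$ is $O(h)$; multiplication gives $|T^\triangle|\leq Ch^4$ and $|e\cap T^\triangle|\leq Ch^3$. The algebraic input is the IFE inverse estimate
\[ |\alpha| \leq C h^{-3/2}\, |I_{h}^{{\rm IFE}}v|_{H^1(T)}, \]
which I would obtain by scaling to a reference tetrahedron $\hat T$: Lemma~\ref{lem_unique3D} makes the IFE nodal basis stable with constants uniform in the interface location, giving a finite-dimensional norm equivalence $|\hat\alpha|\leq C|\widehat{I_{h}^{{\rm IFE}}v}|_{H^1(\hat T)}$ on $\hat T$, and the affine change of variables produces the factor $h^{-3/2}$ in three dimensions (since $|\,\cdot\,|_{H^1}$ scales like $h^{1/2}$ and $\alpha$ like $h^{-1}$).

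Combining gives $\|\delta\|_{L^\infty(T^\triangle)}\leq Ch^2|\alpha|\leq Ch^{1/2}|I_{h}^{{\rm IFE}}v|_{H^1(T)}$, and then
\begin{align*}
\|\delta\|_{L^2(T)}^2 &\leq \|\delta\|_{L^\infty(T^\triangle)}^2\,|T^\triangle| \leq Ch^5\,|I_{h}^{{\rm IFE}}v|_{H^1(T)}^2, \\
|\delta|_{H^1(T)}^2 &= |\alpha|^2\,|T^\triangle| \leq Ch\,|I_{h}^{{\rm IFE}}v|_{H^1(T)}^2, \\
\|\delta|_T\|_{L^2(e)}^2 &\leq \|\delta\|_{L^\infty(T^\triangle)}^2\,|e\cap T^\triangle| \leq Ch^4\,|I_{h}^{{\rm IFE}}v|_{H^1(T)}^2,
\end{align*}
which deliver both assertions (the face estimate is in fact slightly sharper than the stated $h^{3/2}$). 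The main obstacle is establishing the uniform inverse estimate on $|\alpha|$: the naive piecewise inverse inequality on $T^+$ and $T^-$ fails when one of these subelements is arbitrarily small, so the correct argument must exploit the coupling between the two linear pieces imposed by the IFE continuity conditions, in the same spirit as the two-dimensional jump estimate of Lemma~\ref{lem_tiaoyue}.
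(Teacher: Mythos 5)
Your overall architecture mirrors the paper's proof: you localize the difference to the mismatch region $T^\triangle$, identify it there with the IFE jump $[\![I_h^{\rm IFE}v]\!](x)=\alpha\,(x-x^*)\cdot\mathbf{n}_h$, use the $O(h^2)$ thickness from (\ref{rela_3d_nnh2}) together with $|T^\triangle|\le Ch^4$, and reduce everything to the inverse-type bound $|\alpha|\le Ch^{-3/2}|I_h^{\rm IFE}v|_{H^1(T)}$. All of the geometric bookkeeping and the resulting exponents are correct (your face estimate is indeed slightly sharper than the stated $h^{3/2}$, which is harmless).

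The gap is exactly where you say it is, and your proposed route does not close it. The reference-element argument you sketch --- ``Lemma~\ref{lem_unique3D} makes the IFE nodal basis stable with constants uniform in the interface location, giving a finite-dimensional norm equivalence'' --- does not follow: Lemma~\ref{lem_unique3D} gives unisolvence only, the space $S_h(\hat T)$ itself varies with the cutting plane, and a generic equivalence-of-norms/compactness argument cannot deliver a constant uniform over all cut configurations, in particular the degenerate ones where $T^+$ or $T^-$ shrinks to a sliver; this is precisely the obstruction you concede in your last paragraph, so the key estimate on $|\alpha|$ is asserted rather than proved. The paper obtains it constructively: from the explicit representation (\ref{basis_3D})--(\ref{def_w3D}) the jump coefficient is $\alpha=\bigl((\beta^-/\beta^+-1)\nabla I_{h,T}\phi\cdot\mathbf{n}_h\bigr)/\bigl(1+(\beta^-/\beta^+-1)\nabla I_{h,T}w\cdot\mathbf{n}_h\bigr)$, the denominator is bounded below uniformly by (\ref{app1_013D}) (cf.\ (\ref{est_fenmu})), and then inequality (\ref{pro_jum3}) from the 3D proof of Lemma~\ref{lem_tiaoyue} --- which bounds the \emph{constant} $\nabla I_{h,T}\phi\cdot\mathbf{n}_h$ simultaneously by $|\nabla\phi^+\cdot\mathbf{n}_h|$ and by $|\nabla\phi^-\cdot\mathbf{n}_h|$, so that small subelements cause no loss --- gives $|\alpha|\,|T|^{1/2}\le C\|\nabla I_{h,T}\phi\cdot\mathbf{n}_h\|_{L^2(T)}\le C\|\nabla\phi\|_{L^2(T)}$, i.e.\ exactly your missing $|\alpha|\le Ch^{-3/2}|I_h^{\rm IFE}v|_{H^1(T)}$. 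If you replace your reference-element sketch by an appeal to (\ref{basis_3D}), (\ref{app1_013D}) and (\ref{pro_jum3}), your proof becomes complete and essentially coincides with the paper's.
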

Based on the above lemma, the inequality (\ref{pro_int1}) in the proof of Lemma~\ref{ener_app}  is changed to
\begin{equation*}
\begin{aligned}
h^{-1}&\| [v-I_h^{{\rm IFE}}v]_e\|^2_{L^2(e)}\leq Ch^{-1}\| [v-\widehat{I_h^{{\rm IFE}}}v]_e\|^2_{L^2(e)}+Ch^{-1}\| [\widehat{I_h^{{\rm IFE}}}v-I_h^{{\rm IFE}}v]_e\|^2_{L^2(e)}\\
&\leq C\sum_{i=1,2}(h^{-2}\|v-\widehat{I_h^{{\rm IFE}}}v\|^2_{L^2(T_i)}+|v-\widehat{I_h^{{\rm IFE}}}v|^2_{H^1(T_i)})+Ch^{-1}\| [\widehat{I_h^{{\rm IFE}}}v-I_h^{{\rm IFE}}v]_e\|^2_{L^2(e)}\\
&\leq C\sum_{i=1,2}\left(h^{-2}\|v-I_h^{{\rm IFE}}v\|^2_{L^2(T_i)}+|v-I_h^{{\rm IFE}}v|^2_{H^1(T_i)}+h^{-2}\|\widehat{I_h^{{\rm IFE}}}v-I_h^{{\rm IFE}}v\|^2_{L^2(T_i)}\right.\\
&\qquad\qquad\qquad\left.+|\widehat{I_h^{{\rm IFE}}}v-I_h^{{\rm IFE}}v|^2_{H^1(T_i)}+h^{-1}\| (\widehat{I_h^{{\rm IFE}}}v-I_h^{{\rm IFE}}v)|_{T_i}\|^2_{L^2(e)}\right)\\
&\leq C\sum_{i=1,2}\left(h^{-2}\|v-I_h^{{\rm IFE}}v\|^2_{L^2(T_i)}+|v-I_h^{{\rm IFE}}v|^2_{H^1(T_i)}+h|I_h^{{\rm IFE}}v|^2_{H^1(T_i)}\right).
\end{aligned}
\end{equation*}
Summing over all interface faces and using Theorem~\ref{chazhi_error3D}, we have
\begin{equation*}
\begin{aligned}
\sum_{e\in\mathcal{E}_h^\Gamma}h^{-1}\|& [v-I_h^{{\rm IFE}}v]_e\|^2_{L^2(e)}\leq Ch^2\|v\|^2_{H^2(\Omega^+\cup\Omega^-)}+Ch\sum_{T\in\mathcal{T}_h^\Gamma}\left(|v|^2_{H^1(T)}+|v-I_h^{{\rm IFE}}v|^2_{H^1(T)}\right)\\
&\leq Ch^2\|v\|^2_{H^2(\Omega^+\cup\Omega^-)}+Ch\sum_{i=\pm}|\mathbb{E}^iv^i|^2_{H^1(N(\Gamma,h))}\\
&\leq Ch^2\|v\|^2_{H^2(\Omega^+\cup\Omega^-)}+Ch^2\sum_{i=\pm}\|\mathbb{E}^iv^i\|^2_{H^2(\Omega)}\leq Ch^2\|v\|^2_{H^2(\Omega^+\cup\Omega^-)},
\end{aligned}
\end{equation*}
where we have used Lemma~\ref{strip} in the third inequality. Therefore, the result of Lemma~\ref{ener_app}  also holds.

The other issue caused by the discontinuity of IFE functions on $\Gamma$ is the consistent error of the proposed IFE method. The identity in Lemma~\ref{lem_consis} now becomes 
\begin{equation}\label{cons3D}
A_h(u-u_h,v_h)= \int_{\Gamma}\beta^- \nabla u^-\cdot\mathbf{n} [v_h]_\Gamma ds,  \qquad \forall v_h\in V_h^{{\rm IFE}}.
\end{equation}
The estimate of the consistent error is shown in the following lemma whose proof is given in Appendix \ref{pro_consis_3D}
\begin{lemma}\label{lem_consis_3D}
For any $u\in \widetilde{H}^2(\Omega)$ and $v_h\in V_h^{{\rm IFE}}$, it holds 
\begin{equation*}
\left|\int_{\Gamma}\beta^- \nabla u^-\cdot\mathbf{n} [v_h]_\Gamma ds\right|\leq Ch^{3/2}\|u\|_{H^2(\Omega^-)}\left(\sum_{T\in\mathcal{T}_h^\Gamma}\|\nabla v_h\|^2_{L^2(T^+\cup T^-)}\right)^{1/2}.
\end{equation*}
\end{lemma}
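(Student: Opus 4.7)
The idea is to localize to interface elements, exploit the fact that $[\![v_h]\!] = v_h^+ - v_h^-$ is linear on $T$ and vanishes on the tangent plane $\Gamma_{h,T}^{\mathrm{ext}}$, hence is proportional to the signed distance $\rho_h$ to that plane, use the $O(h^2)$ height of $\rho_h$ on $\Gamma\cap T$, and finally convert the proportionality constant into $\|\nabla v_h\|_{L^2(T^+\cup T^-)}$ via the IFE flux condition.

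First I would split $\int_\Gamma=\sum_{T\in\mathcal{T}_h^\Gamma}\int_{\Gamma\cap T}$ and apply Cauchy--Schwarz element-wise,
\begin{equation*}
\Bigl|\int_{\Gamma\cap T}\beta^-\nabla u^-\cdot\mathbf{n}\,[v_h]_\Gamma\,ds\Bigr|\le C\|\nabla u^-\|_{L^2(\Gamma\cap T)}\,\|[v_h]_\Gamma\|_{L^2(\Gamma\cap T)}.
\end{equation*}
For the structural identity, on $T\in\mathcal{T}_h^\Gamma$ the equivalent conditions in (\ref{equa3Dth}) give $[\![v_h]\!](x^*)=0$ and $[\![\nabla v_h\cdot\mathbf{t}_{i,h}]\!]=0$, $i=1,2$. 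Since $[\![v_h]\!]$ is linear on $T$ and vanishes on $\Gamma_{h,T}^{\mathrm{ext}}\ni x^*$, it must be a scalar multiple of the signed distance to that plane: setting $\rho_h(x):=(x-x^*)\cdot\mathbf{n}_h$ and $\alpha_T:=[\![\nabla v_h\cdot\mathbf{n}_h]\!]$, one obtains $[\![v_h]\!](x)=\alpha_T\,\rho_h(x)$ throughout $T$. Using $|\rho_h(x)|\le Ch^2$ on $\Gamma\cap T$ from (\ref{rela_3d_nnh2}) and $|\Gamma\cap T|\le Ch^2$, this yields
\begin{equation*}
\|[v_h]_\Gamma\|_{L^2(\Gamma\cap T)}^2\le Ch^6|\alpha_T|^2.
\end{equation*}

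The core ingredient is then the inverse-type bound $h^3|\alpha_T|^2\le C\|\nabla v_h\|_{L^2(T^+\cup T^-)}^2$. The IFE flux condition $\beta^+\nabla v_h^+\cdot\mathbf{n}_h=\beta^-\nabla v_h^-\cdot\mathbf{n}_h$ together with $\alpha_T=(\nabla v_h^+-\nabla v_h^-)\cdot\mathbf{n}_h$ implies, when $\beta^+\ne\beta^-$,
\begin{equation*}
\nabla v_h^+\cdot\mathbf{n}_h=\frac{\beta^-}{\beta^--\beta^+}\alpha_T,\qquad \nabla v_h^-\cdot\mathbf{n}_h=\frac{\beta^+}{\beta^--\beta^+}\alpha_T,
\end{equation*}
so $|\alpha_T|\le C(\beta^+,\beta^-)|\nabla v_h^\pm|$ on each side; when $\beta^+=\beta^-$ the flux condition forces $\alpha_T=0$ and the bound is trivial. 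Since $|T^+|+|T^-|=|T|\ge ch^3$, at least one of $|T^\pm|\ge ch^3/2$, and because $\nabla v_h^\pm$ are constants on $T$, the corresponding side contributes at least $ch^3|\alpha_T|^2/C$ to $\|\nabla v_h\|_{L^2(T^+\cup T^-)}^2$, proving the claim.

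Finally I would assemble: combining the two displayed inequalities gives $\|[v_h]_\Gamma\|_{L^2(\Gamma\cap T)}\le Ch^{3/2}\|\nabla v_h\|_{L^2(T^+\cup T^-)}$, and a discrete Cauchy--Schwarz over $\mathcal{T}_h^\Gamma$ produces the factor $\bigl(\sum_T\|\nabla v_h\|^2_{L^2(T^+\cup T^-)}\bigr)^{1/2}$ together with $\|\nabla u^-\|_{L^2(\Gamma)}$; the global trace inequality on $\Omega^-$ applied to the extension from Lemma~\ref{lem_ext} (as already used in (\ref{pro_lemmain8})--(\ref{pro_lemmain9})) then bounds $\|\nabla u^-\|_{L^2(\Gamma)}\le C\|u\|_{H^2(\Omega^-)}$ and closes the estimate.

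\textbf{Main obstacle.} The delicate step is the inverse-type control of $|\alpha_T|$. Neither $|T^+|$ nor $|T^-|$ is a priori bounded below by a fixed multiple of $h^3$, so classical scaling arguments do not apply. The rescue is the IFE flux condition itself, which pins the $\mathbf{n}_h$-components of $\nabla v_h^\pm$ to explicit $\beta$-dependent multiples of $\alpha_T$; whichever of $T^+,T^-$ has volume at least $ch^3/2$ then recovers $|\alpha_T|^2$ directly from $\|\nabla v_h\|^2_{L^2(T^\pm)}$. All constants depend on $\beta^\pm$ through factors such as $\beta^\mp/|\beta^--\beta^+|$, but remain independent of the interface location relative to the mesh.
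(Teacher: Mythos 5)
Your proposal is correct and follows essentially the same route as the paper's own proof: Cauchy--Schwarz over interface elements, the identification of $[v_h]_\Gamma$ on $\Gamma\cap T$ as the jump coefficient $\alpha_T=[\![\nabla v_h\cdot\mathbf{n}_h]\!]$ times the signed distance to the tangent plane $\Gamma_{h,T}^{ext}$, the $O(h^2)$ bounds on that distance and on $|\Gamma\cap T|$, the conversion $h^{3}|\alpha_T|^{2}\leq C\|\nabla v_h\|^{2}_{L^2(T^+\cup T^-)}$, and the global trace inequality on $\Omega^-$. The only (minor) difference is how the jump coefficient is controlled: the paper routes it through the explicit representation (\ref{basis_3D}) and the bound (\ref{pro_jum3}), whereas you solve the flux condition directly for $\nabla v_h^\pm\cdot\mathbf{n}_h$ and use whichever of $T^\pm$ has volume of order $h^3$ --- an equally valid and slightly more self-contained variant of the same estimate.
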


With the help of (\ref{cons3D}) and the above lemma,  the proof of Theorem~\ref{theo_mainH1} can be easily modified and the result (\ref{h1_error}) also holds under the conditions of Lemma~\ref{lem_unique3D}.

For the proof of Theorem~\ref{theo_mainL2}, the equation (\ref{pro_l2_1}) now becomes 
\begin{equation}\label{ulti}
\begin{aligned}
\|u-u_h&\|_{L^2(\Omega)}^2=A_h(w-I_h^{{\rm IFE}}z,u-u_h)+\int_{\Gamma}\beta^- \nabla u^-\cdot\mathbf{n} [I_h^{{\rm IFE}}z]_\Gamma ds+\int_\Gamma \beta^-\nabla z^-\cdot\mathbf{n}[u_h]_\Gamma ds.
\end{aligned}
\end{equation}
By Lemma~\ref{lem_consis_3D}, the second term can be estimated as
\begin{equation*}
\begin{aligned}
&\left|\int_{\Gamma}\beta^- \nabla u^-\cdot\mathbf{n} [I_h^{{\rm IFE}}z]_\Gamma ds\right|\leq Ch^{3/2}\|u\|_{H^2(\Omega^-)}\left(\sum_{T\in\mathcal{T}_h^\Gamma}\|\nabla I_h^{{\rm IFE}}z\|^2_{L^2(T^+\cup T^-)}\right)^{1/2}\\
&\qquad\qquad\leq Ch^{3/2}\|u\|_{H^2(\Omega^-)}\left(\left(\sum_{T\in\mathcal{T}_h^\Gamma}\|\nabla (I_h^{{\rm IFE}}z-z)\|^2_{L^2(T^+\cup T^-)}\right)^{1/2}+\|\nabla z\|_{L^2(N(\Gamma,h))}\right)\\
&\qquad\qquad\leq Ch^{3/2}\|u\|_{H^2(\Omega^-)}\left(h\|z\|_{H^2(\Omega^+\cup\Omega^-)}+\sum_{i=\pm}\|\nabla \mathbb{E}^iz^i\|_{L^2(N(\Gamma,h))}\right)\\
&\qquad\qquad\leq Ch^{3/2}\|u\|_{H^2(\Omega^-)}\left(h\|z\|_{H^2(\Omega^+\cup\Omega^-)}+\sum_{i=\pm}h^{1/2}\|\mathbb{E}^iz^i\|_{H^2(\Omega)}\right)\\
&\qquad\qquad\leq Ch^{2}\|u\|_{H^2(\Omega^+\cup \Omega^-)}\|z\|_{H^2(\Omega^+\cup \Omega^-)},
\end{aligned}
\end{equation*}
where in the third inequality we have used Theorem~\ref{chazhi_error3D}  and in the fourth inequality we have used Lemma~\ref{strip}. The estimate of the third term on the right-hand of (\ref{ulti}) is analogous. The remaining proof  is standard. The $L^2$ error estimate in Theorem~\ref{theo_mainL2}  also holds under the conditions of Lemma~\ref{lem_unique3D}.

\section{Numerical examples}\label{sec_num}
In this section, we present some numerical examples for the parameter free PPIFE method to validate the theoretical analysis. Let the domain $\Omega$ be the unit square $(-1,1)\times(-1,1)$, and the interface $\Gamma$ be the zero level set of a
function $\varphi(x)$
so that $\Omega^+=\{x\in\Omega : \varphi(x)>0\}$ and $\Omega^-=\{x\in\Omega : \varphi(x)<0\}$.
We use a non-homogeneous boundary condition $u|_{\partial \Omega}=g$.

For simplicity, we use Cartesian meshes that are formed by first partitioning $\Omega$ into $N\times N$ congruent squares and then cutting these squares along one of diagonals in the same direction.  
We examine the convergence rate of the parameter free PPIFE method using the following norms:
\begin{equation}
|e_h|_{H^1}:=\|u-u_h\|_h \qquad \mbox{ and }\qquad \|e_h\|_{L^2}:=\|u-u_h\|_{L^2(\Omega)}.
\end{equation}

\textbf{Example 1} (from \cite{Li2003new}).
The level set function is $\varphi(x)=\sqrt{x_1^2+x_2^2}-r_0$ with $r_0=0.5$.
The exact solution to the interface problem is chosen as
\begin{equation}
u(x)=\left\{
\begin{aligned}
&\frac{r^3}{\beta^-}\qquad &\mbox{ in }\Omega^-,\\
&\frac{r^3}{\beta^+}+\left(\frac{1}{\beta^-}-\frac{1}{\beta^+}\right)r_0^3 &\mbox{ in }\Omega^+,
\end{aligned}
\right.
\end{equation}
where $r=\sqrt{x_1^2+x_2^2}$. From the PDE, we find the source term is $f(x)=-9\sqrt{x_1^2+x_2^2}$.

 We test our method for two cases:   Case 1: $\beta^-=1$, $\beta^+=2$, $10$, $1000$ and $100000$;   Case 2: $\beta^+=1$ and $\beta^-=2$, $10$, $1000$ and $100000$. The numerical results reported in Tables~\ref{ex1_1L2}--\ref{ex1_2H1} show optimal orders of convergence:
$$
|e_h|_{H^1}\approx O(h) \qquad \mbox{ and }\qquad \|e_h\|_{L^2}\approx O(h^2),
$$
which are in  agreement with Theorems~\ref{theo_mainH1} and \ref{theo_mainL2}.

\begin{table}[H]
\caption{The  $\|e_h\|_{L^2}$  errors and convergence rates for  Case 1 of Example 1, $\beta^-=1$, $\beta^+=2$, $10$, $1000$ and $100000$.\label{ex1_1L2}}
\begin{center}
\begin{tabular}{|c|c c|c c|c c|c c|}
  \hline
       $N$  &  $\beta^+=2$   &  rate   &  $\beta^+=10$  &  rate  &  $\beta^+=1000$  &  rate  & $\beta^+=100000$  &  rate    \\ \hline
     8 &  4.029E-02  &       &   1.363E-02  &         &   1.210E-02  &       & 1.211E-02  &             \\ \hline
    16 &  1.018E-02  & 1.99  &   3.734E-03  &  1.87   &   4.353E-03  & 1.47  & 4.505E-03  & 1.43  \\ \hline
    32 &  2.560E-03  & 1.99  &   9.981E-04  &  1.90   &   1.312E-03  & 1.73  & 1.842E-03  & 1.29  \\ \hline
    64 &  6.403E-04  & 2.00  &   2.480E-04  &  2.01   &   4.034E-04  & 1.70  & 8.009E-04  & 1.20  \\ \hline
   128 &  1.605E-04  & 2.00  &   6.344E-05  &  1.97   &   7.446E-05  & 2.44  & 2.445E-04  & 1.71  \\ \hline
   256 &  4.013E-05  & 2.00  &   1.580E-05  &  2.01   &   1.674E-05  & 2.15  & 6.692E-05  & 1.87  \\ \hline
   512 &  1.004E-05  & 2.00  &   3.953E-06  &  2.00   &   3.953E-06  & 2.08  & 1.794E-05  & 1.90  \\ \hline
  1024 &  2.509E-06  & 2.00  &   9.851E-07  &  2.00   &   9.485E-07  & 2.06  & 3.887E-06  & 2.21   \\ \hline
\end{tabular}
\end{center}
\end{table}

\begin{table}[H]
\caption{ The  $|e_h|_{H^1}$ errors and convergence rates for  Case 1 of  Example 1, $\beta^-=1$, $\beta^+=2$, $10$, $1000$ and $100000$.\label{ex1_1H1}}
\begin{center}
\begin{tabular}{|c|c c|c c|c c|c c|}
  \hline
       $N$  &  $\beta^+=2$   &  rate   &  $\beta^+=10$  &  rate  &  $\beta^+=1000$  &  rate  & $\beta^+=100000$  &  rate    \\ \hline
         8  &  5.823E-01   &       & 2.851E-01 &      &  1.313E-01  &          & 1.288E-01   &        \\ \hline
        16  &  2.929E-01   & 0.99  & 1.466E-01 & 0.96 &  8.084E-02  &    0.70  & 8.127E-02   &   0.66\\ \hline
        32  &  1.467E-01   & 1.00  & 7.402E-02 & 0.99 &  4.323E-02  &    0.90  & 4.950E-02   &   0.72\\ \hline
        64  &  7.337E-02   & 1.00  & 3.709E-02 & 1.00 &  2.206E-02  &    0.97  & 2.903E-02   &   0.77\\ \hline
       128  &  3.669E-02   & 1.00  & 1.856E-02 & 1.00 &  1.029E-02  &    1.10  & 1.459E-02   &   0.99\\ \hline
       256  &  1.835E-02   & 1.00  & 9.282E-03 & 1.00 &  5.039E-03  &    1.03  & 7.163E-03   &   1.03\\ \hline
       512  &  9.173E-03   & 1.00  & 4.642E-03 & 1.00 &  2.498E-03  &    1.01  & 3.335E-03   &   1.10\\ \hline
      1024  &  4.587E-03   & 1.00  & 2.321E-03 & 1.00 &  1.240E-03  &    1.01  & 1.485E-03   &   1.17  \\ \hline
\end{tabular}
\end{center}
\end{table}

\begin{table}[H]
\caption{ The  $\|e_h\|_{L^2}$ errors and convergence rates for  Case 2 of Example 1, $\beta^+=1$, $\beta^-=2$, $10$, $1000$ and $100000$.\label{ex1_2L2}}
\begin{center}
\begin{tabular}{|c|c c|c c|c c|c c|}
  \hline
       $N$  &  $\beta^-=2$   &  rate   &  $\beta^-=10$  &  rate  &  $\beta^-=1000$  &  rate  & $\beta^-=100000$  &  rate    \\ \hline
   8 & 7.770E-02 &       &  7.758E-02 &       &  7.776E-02 &       &  7.777E-02  &        \\ \hline
  16 & 1.957E-02 & 1.99  &  1.953E-02 &  1.99 &  1.959E-02 &  1.99 &  1.959E-02  & 1.99  \\ \hline
  32 & 4.908E-03 & 2.00  &  4.904E-03 &  1.99 &  4.937E-03 &  1.99 &  5.005E-03  & 1.97  \\ \hline
  64 & 1.229E-03 & 2.00  &  1.229E-03 &  2.00 &  1.237E-03 &  2.00 &  1.281E-03  & 1.97  \\ \hline
 128 & 3.074E-04 & 2.00  &  3.078E-04 &  2.00 &  3.078E-04 &  2.01 &  3.698E-04  & 1.79  \\ \hline
 256 & 7.687E-05 & 2.00  &  7.701E-05 &  2.00 &  7.692E-05 &  2.00 &  9.528E-05  & 1.96  \\ \hline
 512 & 1.922E-05 & 2.00  &  1.926E-05 &  2.00 &  1.925E-05 &  2.00 &  2.564E-05  & 1.89  \\ \hline
1024 & 4.805E-06 & 2.00  &  4.817E-06 &  2.00 &  4.820E-06 &  2.00 &  5.997E-06  & 2.10   \\ \hline
\end{tabular}
\end{center}
\end{table}

\begin{table}[H]
\caption{ The  $|e_h|_{H^1}$ errors and convergence rates for  Case 2 in Example 1, $\beta^+=1$, $\beta^-=2$, $10$, $1000$ and $100000$.\label{ex1_2H1}}
\begin{center}
\begin{tabular}{|c|c c|c c|c c|c c|}
  \hline
       $N$  &  $\beta^-=2$   &  rate   &  $\beta^-=10$  &  rate  &  $\beta^-=1000$  &  rate  & $\beta^-=100000$  &  rate    \\ \hline
    8  & 8.046E-01 &       & 7.998E-01  &       &7.949E-01  &        &7.948E-01 &          \\ \hline
   16  & 4.036E-01 & 1.00  & 4.008E-01  & 1.00  &3.996E-01  &  0.99  &3.996E-01 &  0.99  \\ \hline
   32  & 2.020E-01 & 1.00  & 2.005E-01  & 1.00  &2.008E-01  &  0.99  &2.020E-01 &  0.98  \\ \hline
   64  & 1.010E-01 & 1.00  & 1.003E-01  & 1.00  &1.005E-01  &  1.00  &1.011E-01 &  1.00  \\ \hline
  128  & 5.051E-02 & 1.00  & 5.013E-02  & 1.00  &5.011E-02  &  1.00  &5.106E-02 &  0.99  \\ \hline
  256  & 2.526E-02 & 1.00  & 2.507E-02  & 1.00  &2.504E-02  &  1.00  &2.545E-02 &  1.00  \\ \hline
  512  & 1.263E-02 & 1.00  & 1.253E-02  & 1.00  &1.252E-02  &  1.00  &1.271E-02 &  1.00  \\ \hline
 1024  & 6.314E-03 & 1.00  & 6.267E-03  & 1.00  &6.257E-03  &  1.00  &6.308E-03 &  1.01   \\ \hline
\end{tabular}
\end{center}
\end{table}

\textbf{Example 2} (an interface problem with a variable coefficient and a non-convex interface).   The interface is the zero level set of the function,
 $$\varphi(x)=(3(x_1^2+x_2^2)-x_1)^2-x_1^2-x_2^2+0.02.$$
 The exact solution is chosen as $u(x)=\varphi(x)/\beta(x)$, where
 \begin{equation}
 \beta(x)=\left\{
 \begin{aligned}
 &\beta^+(x)=300(2+\sin(6x_1+6x_2))& \mbox{ if } \varphi(x)>0,\\
 &\beta^-(x)=2+\cos(6x_1+6x_2) &\mbox{ if } \varphi(x)<0.
 \end{aligned}\right.
 \end{equation}
 It is easy to verify that the jump conditions (\ref{p1.2})-(\ref{p1.3}) are satisfied. The interface in this example is non-convex and more general, see Figure~\ref{ex2_fig}.

For this variable coefficient interface problem, the average of the coefficient  on an  interface element $T\in\mathcal{T}_h^\Gamma$ is  chosen as $\overline{\beta}_h^\pm=\beta^\pm(x_m)$, where
$x_m$ is the midpoint of $\Gamma_h\cap T$. The numerical results are reported in Table~\ref{ex2_biao}, which confirm  the optimal convergence.

 \begin{figure} [htbp]
\centering
\subfigure{ 
\includegraphics[width=0.45\textwidth]{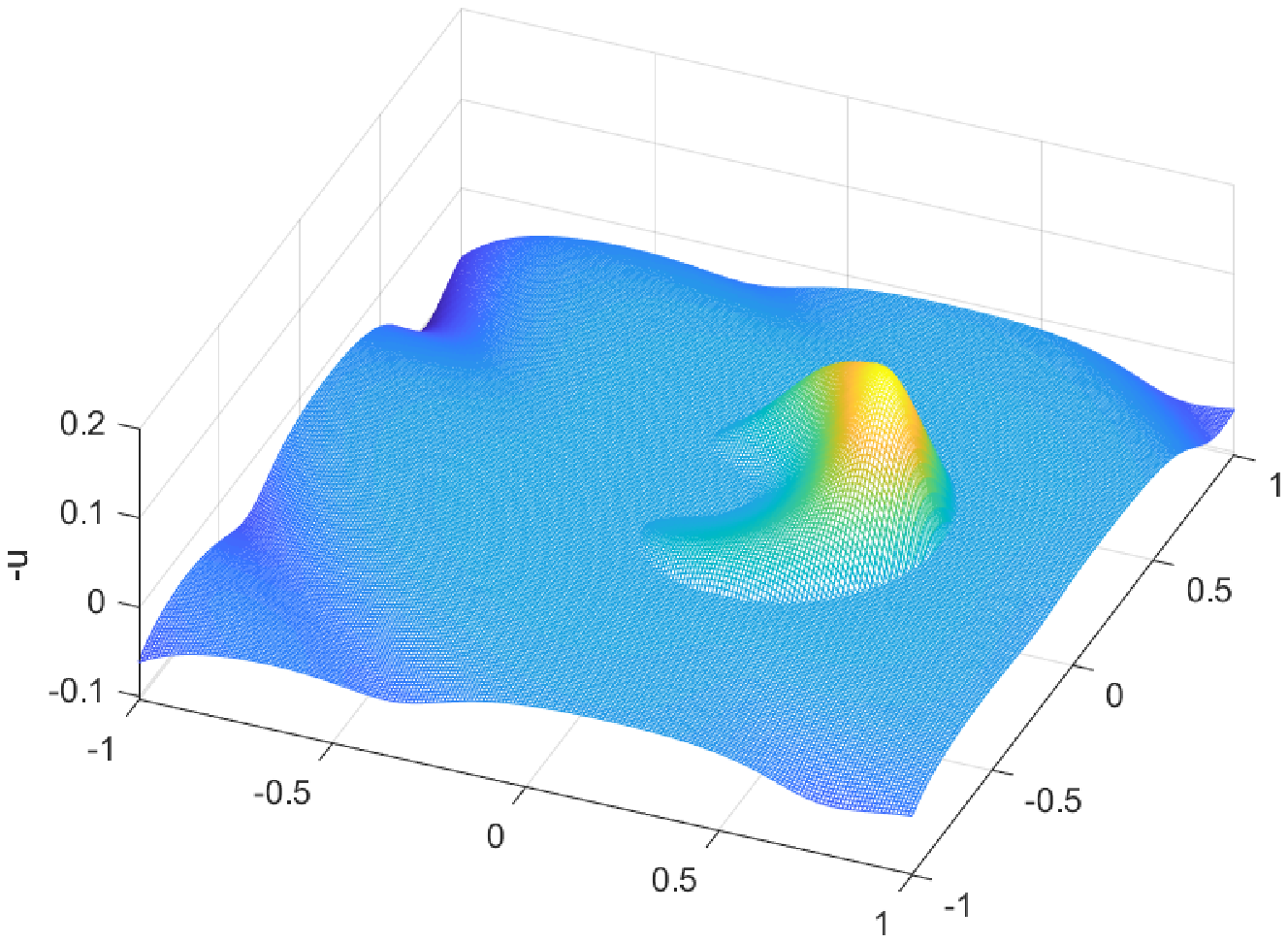}}
\subfigure{
\includegraphics[width=0.45\textwidth]{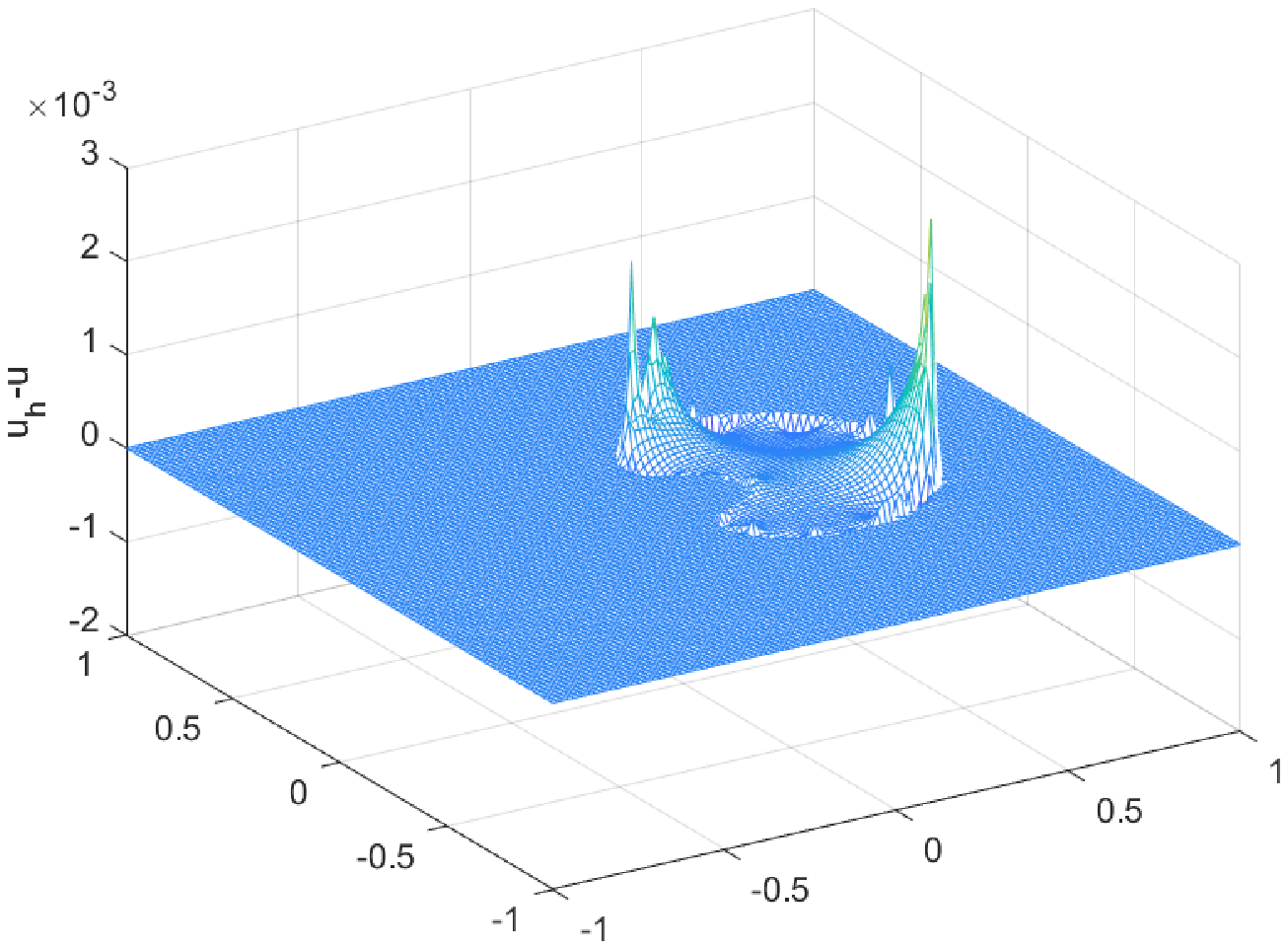}}
 \caption{The solution and the error distribution obtained with $N=128$ for Example 2. Left: the plot of $-u(x)$; Right: the plot of $u_h(x)-u(x)$.\label{ex2_fig}} 
\end{figure}

\begin{table}[H]
\caption{The  $\|e_h\|_{L^2}$  and $|e_h|_{H^1}$ errors and convergence rates for   Example 2. \label{ex2_biao}}
\begin{center}
\begin{tabular}{|c|c c|c c|}
\hline
        $N$  &  $\|e_h\|_{L^2}$  &  rate &   $|e_h|_{H^1}$  &  rate    \\ \hline
         8   &   3.146E-02  &           &    1.673E+00 &                       \\ \hline
        16   &   1.210E-02  &    1.38   &   8.997E-01  &    0.89      \\ \hline
        32   &   4.882E-03  &    1.31   &   4.661E-01  &    0.95     \\ \hline
        64   &   1.456E-03  &    1.75   &   2.339E-01  &    0.99     \\ \hline
       128   &   2.603E-04  &    2.48   &   1.156E-01  &    1.02    \\ \hline
       256   &   5.795E-05  &    2.17   &   5.763E-02  &    1.00   \\ \hline
       512   &   1.376E-05  &    2.07   &   2.878E-02  &    1.00    \\ \hline
      1024   &   2.676E-06  &    2.36   &   1.435E-02  &    1.00    \\ \hline
\end{tabular}
\end{center}
\end{table}

\section{Conclusions and outlook}

In this paper, a new parameter free partially penalized immersed finite element method with unfitted meshes for solving elliptic interface problems has been developed and analyzed. The degrees of freedom of the new method are the same as that of the standard linear conforming finite element method.  Furthermore, if the coefficient is continuous, then the new method becomes the standard linear conforming finite element method.  Not only the optimal approximation capabilities of the immersed finite element  space but also the optimal convergence of the new PPIFE method are  proved via a new trace inequality and a novel proof technique using auxiliary functions.  The method and analysis has been extended to variable coefficients and 3D problems.

Further directions of research  include the extension of the method and the analysis in this paper to non-homogeneous jump conditions and higher order IFE methods.
We plan to use the correction functions defined in \cite{guzman2016higher,He2012Immersed} to deal with non-homogeneous jump conditions. The correction functions are constructed to satisfy non-homogeneous jump conditions exactly on some points on the interface, which requires that the exact solution has a higher regularity to make the flux jump $\mathcal{J}_N(x):=[\beta\nabla u\cdot \mathbf{n}]_\Gamma(x)$ well-defined at these points on the interface. The method in \cite{He2012Immersed} has been analyzed in \cite{guzman2016accuracy} under the assumption that $u\in C^2(\overline{\Omega}^\pm)$.  Another way to construct   correction functions is based on the extension of $\mathcal{J}_N(x)$ (see \cite{jiESAJAM2018}). The analysis requires $u\in H^3(\Omega^+\cup\Omega^-)$. 
When  $u\in H^2(\Omega^+\cup\Omega^-)$, the flux jump $\mathcal{J}_N(x)$ belongs to $L^2(\Gamma)$ and $\mathcal{J}_N(x)$ is not well-defined at  points on the interface.  Once common approach is to use the average of $\mathcal{J}_N$ along $\Gamma\cap T$ to construct correction functions on an interface element $T$.   Since $|\Gamma\cap T|$ may  be close to zero, we should define the correction function on a larger fictitious interface element as  it has been done in \cite{adjerid2020enriched,201GUOSIAM}. 
For high order IFE methods, there are many exploratory works  \cite{adjerid2018higher,adjerid2017high}.
To our best knowledge, the proof of the optimal approximation capabilities of those higher order IFE spaces developed in [2,3] is an open problem. How to extend the analysis here to high order IFE methods is under investigation.

\textbf{Acknowledgment.}
The authors would like to thank the anonymous referees sincerely for their careful reading and helpful suggestions that improved the quality of this paper.

\bibliographystyle{plain}

 \begin{appendices}
\section{Technical results for the 2D cases}\label{appen2D}

\subsection{Proof of Lemma~\ref{lem_unique}}\label{pro_lem_unique}
\begin{proof} $\!\!:$
Note that when $T$ is an isosceles right triangle, the proof can be found in  the literature, see for example \cite{Li2003new,GuoIMA2019}.
 We now provide the proof for $\alpha_{max}\leq \pi/2$. Given a function $\phi\in S_h(T)$, if we known the jump $c_1:=(\nabla \phi^+-\nabla \phi^-)\cdot \mathbf{n}_h$ which is a constant, then the function $\phi$ can be written as 
\begin{equation}\label{app1_deco}
\phi=I_{h,T}\phi+c_1(w-I_{h,T}w),
\end{equation}
with
\begin{equation}\label{def_w2D}
w(x)=\left\{
\begin{aligned}
&w^+(x)=d_{\Gamma_{h,T}^{ext}}(x)~~ &\mbox{ in } T_h^+,\\
&w^-(x)=0  &\mbox{ in } T_h^-,
\end{aligned}\right.
~~~~
d_{\Gamma_{h,T}^{ext}}(x)=\left\{
\begin{aligned}
&{\rm dist}(x,\Gamma_{h,T}^{ext})~~ &\mbox{ if } x\in T_h^+,\\
&-{\rm dist}(x,\Gamma_{h,T}^{ext})  &\mbox{ if } x\in T_h^-,
\end{aligned}\right.
\end{equation}
where $I_{h,T}$ is the standard linear nodal interpolation operator on $T$, $\Gamma_{h,T}^{ext}$ is a straight line containing $\Gamma_h\cap T$, and ${\rm dist}(x, \Gamma_{h,T}^{ext})$ is the distance between $x$ and $\Gamma_{h,T}^{ext}$.
Substituting (\ref{app1_deco}) into the third identity in (\ref{ifem_modi}),  we obtain the following equation for $c_1$,
\begin{equation}\label{app1_equ}
(1+(\beta^-/\beta^+-1)\nabla I_{h,T}w\cdot\mathbf{n}_h)c_1=(\beta^-/\beta^+-1)\nabla I_{h,T}\phi\cdot{\mathbf{n}_h}.
\end{equation}
Clearly, if we can prove 
\begin{equation}\label{app1_01}
0\leq \nabla I_{h,T}w\cdot\mathbf{n}_h\leq 1,
\end{equation}
then
\begin{equation}\label{est_fenmu}
(1+(\beta^-/\beta^+-1)\nabla I_{h,T}w\cdot\mathbf{n}_h)\geq 
\left\{
\begin{aligned}
&1\qquad &&\mbox{ if } \beta^-/\beta^+\geq 1,\\
&\beta^-/\beta^+\qquad &&\mbox{ if } 0<\beta^-/\beta^+<1,
\end{aligned}\right.
\end{equation}
which implies that the equation (\ref{app1_equ}) has a unique solution. Substituting the solution of  (\ref{app1_equ}) into (\ref{app1_deco}) yields 
\begin{equation}\label{app1_basis}
\phi=I_{h,T}\phi+\frac{(\beta^-/\beta^+-1)\nabla I_{h,T}\phi\cdot{\mathbf{n}_h}}{1+(\beta^-/\beta^+-1)\nabla I_{h,T}w\cdot\mathbf{n}_h}(w-I_{h,T}w),
\end{equation}
which proves the lemma. 

Next, we prove (\ref{app1_01}).  There are two cases.  Case $\rm I$: $\triangle A_2DE=T_h^+$ (see Figure~\ref{triangle_analysis}) and Case $\rm II$: $\triangle A_2DE=T_h^-$. In  Case $\rm I$,  since  $w(A_1)=w(A_3)=0$,  it holds 
\begin{equation}\label{2Dcase1}
\nabla I_{h,T}w\cdot\mathbf{n}_h=\nabla\lambda_2\cdot\textbf{n}_hd_{\Gamma_{h,T}^{ext}}(A_2)=\nabla\lambda_2\cdot\textbf{n}_h|A_2A_{2,\perp}|=1-\lambda_2(A_{2,\perp}),
\end{equation}
where 
 $A_{2,\perp}$ is the orthogonal projection of the point $A_2$ onto the line $DE$,  and $\lambda_i(x)$ is the standard linear basis function defined by $\lambda_i(A_j)=\delta_{ij}$ (the Kronecker symbol). 
The polynomial extension of $\lambda_i(x)$ is also denoted by $\lambda_i(x)$ for simplicity of notations.
  In Case II  ($\triangle A_2DE=T_h^-$), for the sake of clarity, we replace the notations $w$, $\mathbf{n}_h$ and $d_{\Gamma_{h,T}^{ext}}$ by $\tilde{w}$,  $\tilde{\mathbf{n}}_h$ and $\tilde{d}_{\Gamma_{h,T}^{ext}}$, respectively. Obviously, $\tilde{\mathbf{n}}_h=-\mathbf{n}_h$ and $\tilde{d}_{\Gamma_{h,T}^{ext}}(A_2)=-|A_2A_{2,\perp}|$. Similar to (\ref{2Dcase1}), we have
  \begin{equation}\label{2Dcase2_ori}
 \nabla I_{h,T}\tilde{w}\cdot\tilde{\mathbf{n}}_h=1-\lambda_1(A_{1,\perp})+1-\lambda_3(A_{3,\perp}).
 \end{equation}
However, using the signed distance function $\tilde{d}_{\Gamma_{h,T}^{ext}}$ we also have
 \begin{equation}\label{2Dcase2}
 \begin{aligned}
 \nabla I_{h,T}\tilde{w}\cdot\tilde{\mathbf{n}}_h&=\nabla(\sum_{i=1,3}\tilde{d}_{\Gamma_{h,T}^{ext}}(A_i)\lambda_i)\cdot\tilde{\textbf{n}}_h=\nabla(\tilde{d}_{\Gamma_{h,T}^{ext}}-\tilde{d}_{\Gamma_{h,T}^{ext}}(A_2)\lambda_2)\cdot\tilde{\mathbf{n}}_h\\
 &=1-\nabla\lambda_2\cdot\textbf{n}_h|A_2A_{2,\perp}|=1-\nabla I_{h,T}w\cdot\mathbf{n}_h.
 \end{aligned}
 \end{equation}
Thus, it suffices  to consider Case $\rm I$: $\triangle A_2DE=T_h^+$ which is shown in Figure~\ref{triangle_analysis}.

If  $A_2$ and $A_{2,\perp}$ are on different sides of the line $A_1A_3$  (see Figure~\ref{triangle_analysis3} for an illustration), then we have $\angle A_1A_3A_2>\angle A_3QA_2>\frac{\pi}{2}$. This contradicts the condition $\alpha_{max}\leq \frac{\pi}{2}$. Thus, we conclude that
$A_2$ and $A_{2,\perp}$ are on the same side of the line $A_1A_3$, which together with the fact $\lambda_2(A_2)=1$, $\lambda_2(A_1)=\lambda_2(A_3)=0$, leads to
\begin{equation*}
\nabla I_{h,T}w\cdot\mathbf{n}_h=1-\lambda_2(A_{2,\perp})\leq 1,
\end{equation*}
On the other hand, using the condition $\angle A_2A_3A_1$ and $\angle A_2A_1A_3\leq \alpha_{max}\leq \frac{\pi}{2}$, we conclude $\overrightarrow{A_1A_3}\cdot\textbf{t}_h>0$, which implies 
\begin{equation*}
\begin{aligned}
\nabla I_{h,T}w\cdot\mathbf{n}_h&=\nabla\lambda_2\cdot\textbf{n}_h|A_2A_{2,\perp}|=|A_2A_{2,\perp}||\nabla\lambda_2|\textbf{n}_{A_1A_3}\cdot\textbf{n}_h\\
&=|A_2A_{2,\perp}||\nabla\lambda_2||A_1A_3|^{-1}\overrightarrow{A_1A_3}\cdot\textbf{t}_h\geq0,
\end{aligned}
\end{equation*}
where  $\textbf{n}_{A_1A_3}$ is the unit normal vector of the line $A_1A_3$ pointing toward $A_2$.  \qed
\end{proof}

\begin{figure} [htbp]
\centering
\subfigure[]{ \label{triangle_analysis} 
\includegraphics[width=0.23\textwidth]{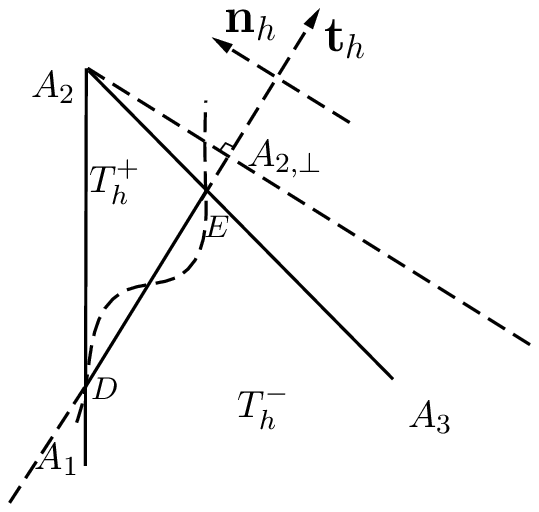}}
\subfigure[]{ \label{triangle_analysis3} 
\includegraphics[width=0.4\textwidth]{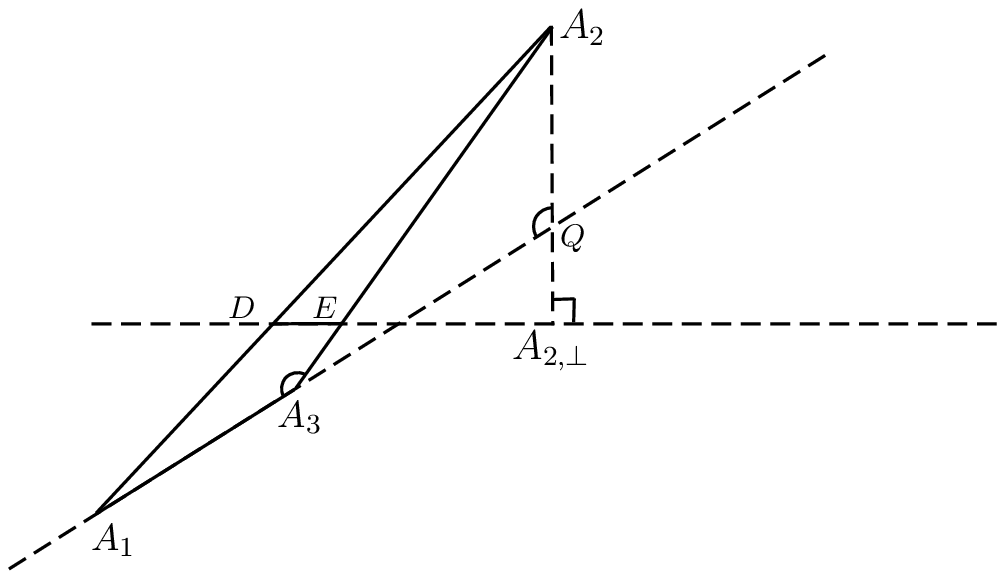}}
\subfigure[]{ \label{triangle_analysis2} 
\includegraphics[width=0.33\textwidth]{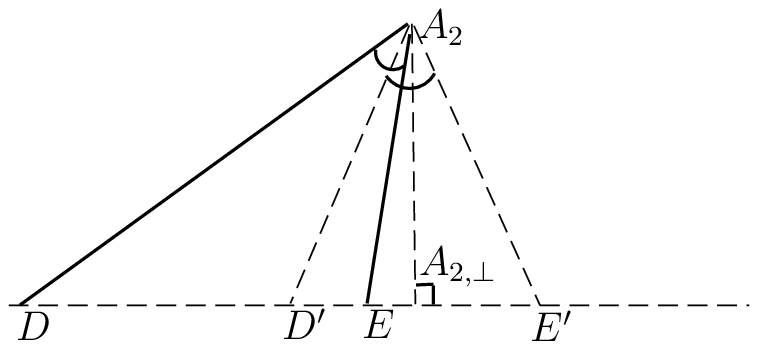}}
 \caption{A diagram of an interface element.} 
\end{figure}
\ \\
\textbf{A counter example for $\alpha_{max}>\frac{\pi}{2}$:}
\begin{equation*}
\begin{aligned}
&A_1=(0,0),~~ A_2=(-\sqrt{3},1),~~ A_3=(1,0),~~ D=(0,0),\\
&E=(-(2+\sqrt{3})^{-1}, \sqrt{3}(2+\sqrt{3})^{-1}),~~ \beta^-=3,~~ \beta^+=1,~~T_h^+=\triangle DA_2E.
\end{aligned}
\end{equation*}
By a direct calculation, we find that the shape function $\phi(x)$ cannot be determined by $\phi(A_i)$, $i=1,2,3$ in this case.

\subsection{Proof of Lemma~\ref{lem_jiest}}\label{pro_lem_jiest}
\begin{proof}:
First we present the following useful  inequality about basis functions of the linear IFE space $S_h(T)$.
Let $\phi_{A_i}\in S_h(T)$ be the basis function corresponding to a vertex $A_i$ of $T$ defined by $\phi_{A_i}(A_j)=\delta_{ij}$.  From (\ref{app1_basis}) and (\ref{app1_01}), it is easy to prove that \begin{equation}\label{est_phi}
|\phi_{A_i}|_{W_\infty^m(T)}\leq Ch^{-m},~ m=0,1,
\end{equation}
where the constant $C$ is independent of $h$ and the interface location relative to the mesh.

\textbf{Derive upper bounds of  $|\Upsilon(x)|_{H^m(T)},~m=0,1$}. We construct $\Upsilon$  as follows,
\begin{equation}\label{le_eta1}
\Upsilon=z-I^{{\rm IFE}}_{h,T} z, \quad z=\left\{
\begin{aligned}
&v\qquad \mbox{ in } T_h^+,\\
&0 \qquad \mbox{ in } T_h^-,
\end{aligned}\right.
\end{equation}
where  the function $v$ is linear and satisfies
\begin{equation}\label{le_eta12}
\beta^+\nabla v\cdot \textbf{n}_h=1,\quad v(D)=v(E)=0.
\end{equation}
Here $I^{{\rm IFE}}_{h,T} z$ interpolates nodal values of $v$ defined on $T$, i.e.,  $I^{{\rm IFE}}_{h,T} z=\sum_{i}v(A_i)\phi_{A_i}$.
It is easy to verify that the constructed function $\Upsilon$ satisfies the definitions (\ref{jum_ji1})-(\ref{jum_ji1_cons}).  Since $v(D)=v(E)=0$, we have $\nabla v\cdot\textbf{t}_h=0$. Thus,
\begin{equation*}
|\nabla v|^2= |\nabla v\cdot \textbf{n}_h|^2+ |\nabla v\cdot \textbf{t}_h|^2\leq C.
\end{equation*}
For any point $P\in T_h^+$, using the relation $v(P)=v(D)+\nabla v\cdot \overrightarrow{DP}$, we have
\begin{equation*}
|z|^2_{L^\infty(T)}=|v|^2_{L^\infty(T_h^+)}\leq |\nabla v|^2|DP|^2\leq Ch^2.
\end{equation*}
From (\ref{le_eta1}) and (\ref{est_phi}), we get the desired  estimates
\begin{equation*}
\begin{aligned}
&\|\Upsilon\|^2_{L^2(T)}\leq 2\|z\|^2_{L^\infty(T)}|T|+2\sum_iz^2(A_i)\|\phi_{A_i}\|^2_{L^\infty(T)}|T|\leq Ch^4,\\
&|\Upsilon|^2_{H^1(T)}\leq 2|\nabla v|^2|T^+_h|+2\sum_iz^2(A_i)|\phi_{A_i}|^2_{W^1_\infty(T)}|T|\leq Ch^2.
\end{aligned}
\end{equation*}

\textbf{Derive upper bounds of $|\Psi_i(x)|_{H^m(T^+_h\cup T_h^-)},~i=D,E,~m=0,1$}. Without loss of generality, we assume that the interface $\Gamma$ intersects with the line segments $\overline{A_1A_2}$ and $\overline{A_2A_3}$ at points $D, E$, respectively, see Figure~\ref{triangle_analysis} for an illustration. Since the triangulation is regular, we assume that there are two constants $\alpha_{min}$ and $\alpha_{max}$ such that $\alpha_{min}\leq\angle A_1A_2A_3\leq\alpha_{max}$.

Let $D^\prime$ and $E^\prime$ be two points on the line $DE$ such that $\angle DA_2E=\angle D^\prime A_1E^\prime$ and $|A_2D^\prime|=|A_2E^\prime|$, see Figure~\ref{triangle_analysis2} for an illustration.  Then, we have the key inequality
\begin{equation}\label{le2_gama}
|DE|\geq |D^\prime E^\prime|= 2|A_2A_{2,\perp}|\tan\frac{\angle A_1A_2A_3}{2}\geq 2|A_2A_{2,\perp}|\tan\frac{\alpha_{min}}{2}\geq C|A_2A_{2,\perp}|.
\end{equation}
Similar to (\ref{le_eta1}), we construct $\Psi_D(x)$  as follows,
\begin{equation}\label{le2_eta1}
\Psi_D=z-I_{h,T}^{{\rm IFE}} z, \quad z=\left\{
\begin{aligned}
&v\qquad \mbox{ in } T_h^+,\\
&0 \qquad \mbox{ in } T_h^-,
\end{aligned}\right.
\end{equation}
where the function $v$ is linear and satisfies
\begin{equation}\label{le2_v1}
\beta^+\nabla v\cdot\textbf{n}_h=0,\quad v(D)=1,\quad v(E)=0.
\end{equation}
From (\ref{le2_v1}), we have
\begin{equation*}
\begin{aligned}
\left|\nabla v\cdot \textbf{t}_h\right|=\frac{1}{|DE|},\qquad |v(A_2)|=|v(A_{2,\perp})|=|v(E)|+|A_{2,\perp}E|\left|\nabla v\cdot \textbf{t}_h\right|=\frac{|A_{2,\perp}E|}{|DE|}.
\end{aligned}
\end{equation*}
If $A_{2,\perp}\in \overline{T}$, then $|A_{2,\perp}E|\leq |DE|$ and $|v(A_2)|=|v(A_{2,\perp})|\leq 1$.
Otherwise, we have $\angle A_1A_2A_3<\pi/2$. Using (\ref{le2_gama}), we obtain
\begin{equation*}
|v(A_2)|\leq C\frac{|A_{2,\perp}E|}{|A_2A_{2,\perp}|}\leq C\tan(\frac{\pi}{2}-\angle A_1A_2A_3)\leq C\tan (\frac{\pi}{2}-\alpha_{min})\leq C,
\end{equation*}
where we have used the fact  that the line $DE$ cannot be parallel to the line $A_1A_2$.
Hence, we have
\begin{equation*}
\|z\|_{L^\infty(T)}\leq C~\mbox{ and }~\|z\|^2_{L^2(T)}\leq \|z\|^2_{L^\infty(T)}|T|\leq Ch^2.
\end{equation*}
 Using (\ref{le2_eta1}) and (\ref{est_phi}), we have     
 \begin{equation*}
 \|\Psi_D\|^2_{L^2(T)}\leq C(v(A_2)\|\phi_{A_2}\|^2_{L^\infty(T)}|T|+\|z\|^2_{L^2(T)})\leq Ch^2.
 \end{equation*}
 Since $v$ is linear, we know that
 \begin{equation*}
 |z|^2_{H^1(T_h^+\cup T_h^-)}=|\nabla v|^2|T_h^+|\leq \frac{|T_h^+|}{|DE|^2}\leq \frac{|DE||A_2A_{2,\perp}|}{2|DE|^2}\leq C,
 \end{equation*}
 where we have used the equality (\ref{le2_gama}). It follows from  (\ref{le2_eta1}) and (\ref{est_phi}) that
 \begin{equation*}
 |\Psi_D|^2_{H^1(T_h^+\cup T_h^-)}\leq C(v(A_2)|\phi_{A_2}|^2_{W^1_\infty(T)}|T|  +|z|^2_{H^1(T_h^+\cup T_h^-)})\leq C.
 \end{equation*}
The upper bound estimate for $\Psi_E$ is analogous. \qed
\end{proof}

\section{Technical results for the 3D cases}\label{appen3D}

\subsection{Proof of Lemma~\ref{lem_unique3D}}\label{pro_lem_unique3D}
\begin{proof} $\!\!:$
Similar to (\ref{app1_basis}) for the 2D cases,  we also have
\begin{equation}\label{basis_3D}
\phi=I_{h,T}\phi+\frac{(\beta^-/\beta^+-1)\nabla I_{h,T}\phi\cdot{\mathbf{n}_h}}{1+(\beta^-/\beta^+-1)\nabla I_{h,T}w\cdot\mathbf{n}_h}(w-I_{h,T}w),\qquad \forall \phi \in  S_h(T),
\end{equation}
where  $T_h^+$ and $T_h^-$ in the definition of $w$ in (\ref{def_w2D}) are replaced by $T^+$ and $T^-$,  that is,
\begin{equation}\label{def_w3D}
w(x)=\left\{
\begin{aligned}
&w^+(x)=d_{\Gamma_{h,T}^{ext}}(x)~~ &\mbox{ in } T^+,\\
&w^-(x)=0  &\mbox{ in } T^-.
\end{aligned}\right.
\end{equation}
It suffices to prove the following relation: 
\begin{equation}\label{app1_013D}
0\leq \nabla I_{h,T}w\cdot\mathbf{n}_h\leq 1.
\end{equation}

There are only two types of interface elements.  Type {I} interface element:  the plane $\Gamma_{h,T}^{ext}$ cuts three edges of the tetrahedron (see Figures~\ref{fig_3D1}); Type {II} interface element:  the plane $\Gamma_{h,T}^{ext}$ cuts four edges of the tetrahedron (see Figures~\ref{fig_3D2}). 

\begin{figure} [htbp]
\centering
\subfigure{ 
\includegraphics[width=0.33\textwidth]{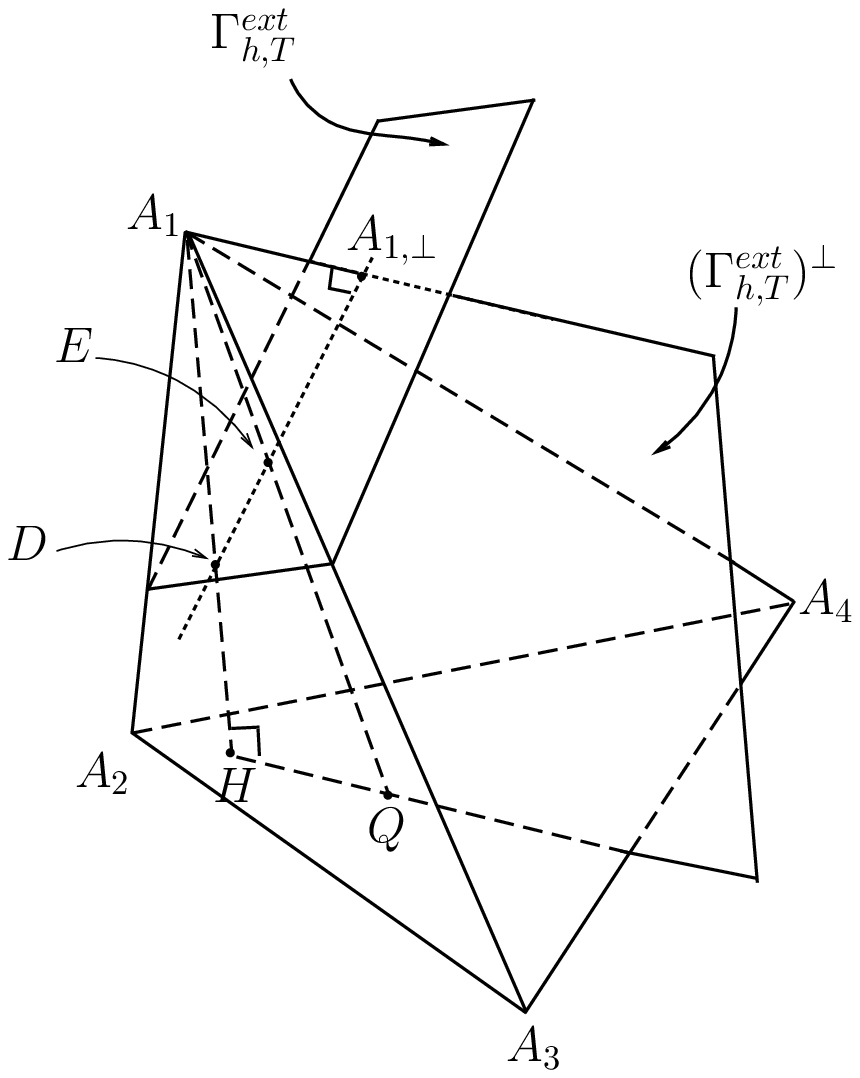}}
\qquad\qquad
\subfigure{
\includegraphics[width=0.15\textwidth]{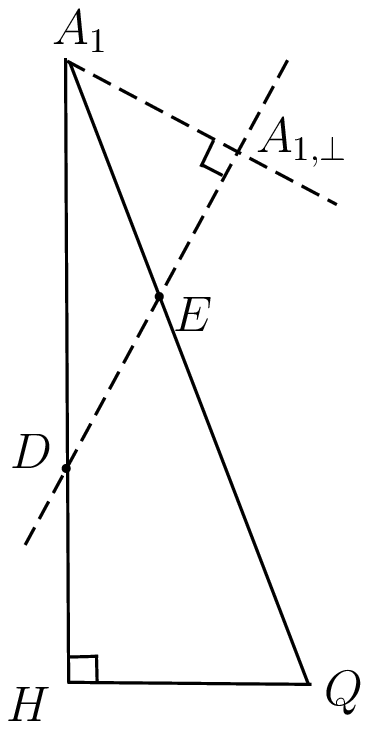}}
 \caption{Type {\rm I} interface element in 3D. The plane $\Gamma_{h,T}^{ext}$ cuts three edges of the element.\label{fig_3D1}} 
\end{figure}

For  Type I interface element, we take the tetrahedron in Figures~\ref{fig_3D1} as an illustration. Similar to the 2D cases, we only need to consider the case $A_1\in T^+$. Let $A_{i,\perp}$ be the orthogonal projection of the point $A_i$ onto the plane $\Gamma_{h,T}^{ext}$. Similar to (\ref{2Dcase1}), we have
\begin{equation}\label{appn_nad1}
\nabla I_{h,T}w\cdot\mathbf{n}_h =1-\lambda_1(A_{1,\perp}),
\end{equation}
where $\lambda_i$ is the standard 3D linear basis function associated with the vertex $A_i$.
Let $H$ be the orthogonal projection of the point $A_1$ onto the plane $A_2A_3A_4$.  The dihedral angle between $A_1A_2A_3$ and $A_4A_2A_3$ is denoted by $A_1\mbox{-}A_2A_3\mbox{-}A_4$. As we assume that the dihedral angles $A_1\mbox{-}A_2A_3\mbox{-}A_4$, $ A_1\mbox{-}A_3A_4\mbox{-}A_2$, $ A_1\mbox{-}A_2A_4\mbox{-}A_3$ are less than or equal to $\pi/2$, the point $H$ must be on the triangle $\triangle A_2A_2A_4$ or its boundary, and there exists a point of intersection  $D$ of the line segment $\overline{A_1H}$ and the plane $\Gamma_{h,T}^{ext}$.  
 Let $(\Gamma_{h,T}^{ext})^\perp$ be a plane that passes through the points $A_1$, $H$ and $A_{1,\perp}$.  Obviously, we can choose a point $Q$, different from $H$, on the line of intersection of the plane $(\Gamma_{h,T}^{ext})^\perp$ and the plane $A_2A_2A_4$ such that  $\Gamma_{h,T}^{ext}\cap \overline{A_1Q}\not=\emptyset$.  
The point of intersection of $\Gamma_{h,T}^{ext}$ and $\overline{A_1Q}$  is denoted by $E$. 
%

 Now we focus on the triangle $\triangle A_1HQ$ (see the right picture in Figure~\ref{fig_3D1}). Let $\tilde{\lambda}_1$ be the standard 2D linear basis function on the triangle $\triangle A_1HQ$ associated with the point $A_1$. Note that  $\lambda_1(H)=\lambda_1(Q)=0$ and $\lambda_1(A_1)=1$,  it holds $\tilde{\lambda}_1(x)=\lambda_1(x)$ on the plane $(\Gamma_{h,T}^{ext})^\perp$.
Since the maximum angle of the triangle $\triangle A_1HQ$ is equal to $\pi/2$, using the result of the 2D cases (see the proof of Lemma~\ref{lem_unique} in Appendix \ref{pro_lem_unique}), we obtain 
\begin{equation*}
\nabla I_{h,T}w\cdot\mathbf{n}_h =1-\tilde{\lambda}_1(A_{1,\perp})\in [0,1].
\end{equation*}

\begin{figure} [htbp]
\centering
\subfigure{ 
\includegraphics[width=0.55\textwidth]{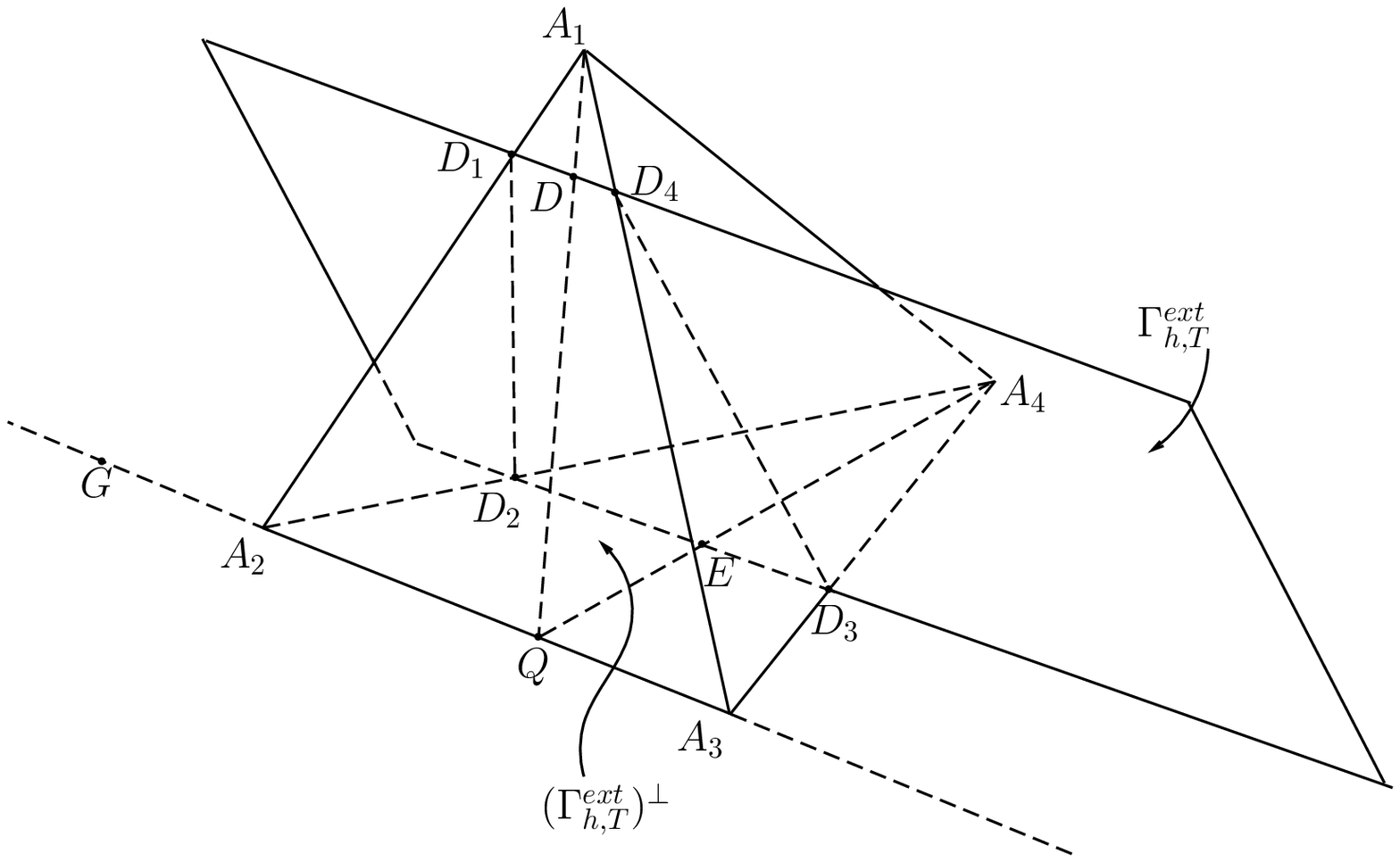}}
\qquad
\subfigure{
\includegraphics[width=0.23\textwidth]{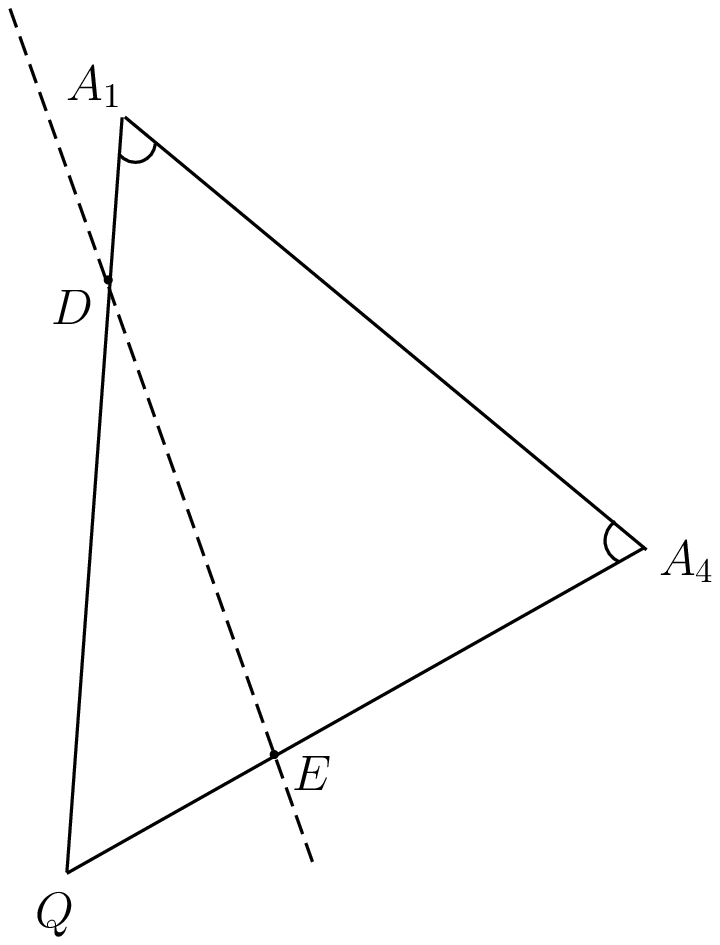}}
 \caption{Type {\rm II} interface element in 3D. The plane $\Gamma_{h,T}^{ext}$ cuts four edges of the element.\label{fig_3D2}} 
\end{figure}

For  Type II interface element, we take the tetrahedron in Figures~\ref{fig_3D2} as an illustration. The plane $\Gamma_{h,\Gamma}^{ext}$ intersects with the edges $\overline{A_1A_2}$, $\overline{A_2A_4}$, $\overline{A_3A_4}$ and $\overline{A_1A_3}$ at the points $D_1$, $D_2$, $D_3$ and $D_4$. In view of the limiting cases,
$$
\begin{cases}
      & \text{$D_4\rightarrow A_1$, $D_3 \rightarrow A_4$}~  (\text{i.e.,}\Gamma_{h,T}^{ext}\rightarrow \text{the plane }A_1A_2A_4), \\
      & \text{$D_2 \rightarrow A_2$, $D_3 \rightarrow A_3$}~  (\text{i.e.,}\Gamma_{h,T}^{ext}\rightarrow \text{the plane }A_1A_2A_3),\\
      &\text{$D_1\rightarrow A_2$, $D_4\rightarrow A_3$}~  (\text{i.e.,}\Gamma_{h,T}^{ext}\rightarrow \text{the plane }A_2A_3A_4),\\
      &\text{$D_1\rightarrow A_1$,  $D_2 \rightarrow A_4$}~  (\text{i.e.,}\Gamma_{h,T}^{ext}\rightarrow \text{the plane }A_1A_3A_4),\\
\end{cases}
$$
the following relation must be true,
\begin{equation*}
 0< D_4\mbox{-}D_1D_2\mbox{-}A_4< \max\{A_3\mbox{-}A_1A_2\mbox{-}A_4, A_3\mbox{-}A_2A_4\mbox{-}A_1, \pi-A_3\mbox{-}A_1A_4\mbox{-}A_2\}. 
\end{equation*}
Together with the condition $\gamma_{max}\leq \pi/2$, we conclude that,   
\begin{equation}\label{pro_ulti_geo3D0}
 0< D_4\mbox{-}D_1D_2\mbox{-}A_4<\pi-A_3\mbox{-}A_1A_4\mbox{-}A_2.
\end{equation}

Without loss of generality, we assume $A_1\in T^+$, so we have 
\begin{equation*}
\nabla I_{h,T}w\cdot\mathbf{n}_h =1-\lambda_1(A_{1,\perp})+(1-\lambda_4(A_{4,\perp})).
\end{equation*}
 Let $(\Gamma_{h,T}^{ext})^\perp$ be the plane that passes through the points $A_1$ and $A_4$ and is perpendicular to the plane $\Gamma_{h,T}^{ext}$.  Let $Q$ be the point of intersection of the plane $(\Gamma_{h,T}^{ext})^\perp$ and line $A_2A_3$. 

Now we focus on the triangle $\triangle A_1QA_4$ (see the right picture in Figure~\ref{fig_3D2}).  Let $\tilde{\lambda}_1$ and $\tilde{\lambda}_4$ be the standard 2D linear basis functions on the triangle $\triangle A_1QA_4$ associated with the points $A_1$ and $A_4$, respectively. Note that  $\lambda_1(A_4)=\lambda_1(Q)=0$ and $\lambda_4(A_1)=\lambda_4(Q)=0$,  we have
$\tilde{\lambda}_1(x)=\lambda_1(x)$ and $\tilde{\lambda}_4(x)=\lambda_4(x)$  on the plane $(\Gamma_{h,T}^{ext})^\perp$. Therefore, it holds
\begin{equation*}
\nabla I_{h,T}w\cdot\mathbf{n}_h =1-\tilde{\lambda}_1(A_{1,\perp})+(1-\tilde{\lambda}_4(A_{4,\perp})),
\end{equation*}
which is the same as the equation (\ref{2Dcase2_ori}) for Case II in the 2D cases if we consider the triangle  $\triangle A_1QA_4$. In order to use the result of the 2D cases,  we need to verify the angle condition of the triangle $\triangle A_1QA_4$.
In view of the relation (\ref{pro_ulti_geo3D0}), we consider two cases: $D_4\mbox{-}D_1D_2\mbox{-}A_4\in (0,\pi/2]$ and $D_4\mbox{-}D_1D_2\mbox{-}A_4\in [\pi/2,\pi-A_3\mbox{-}A_1A_4\mbox{-}A_2)$.
If the dihedral angle $D_4\mbox{-}D_1D_2\mbox{-}A_4\in (0,\pi/2]$, then the point $Q$ is on the ray $\overrightarrow{A_2A_3}$ and  the following relation holds:
\begin{equation}\label{3Dangle_pro1}
0\leq Q\mbox{-}A_1A_4\mbox{-}A_2<\pi/2.
\end{equation}
Note that the existence of the point $Q$ relies on $\alpha_{max}\leq \pi/2$, $\gamma_{max}\leq \pi/2$ and the relation (\ref{3Dangle_pro1}).
%
\begin{figure} [htbp]
\centering
\includegraphics[width=0.35\textwidth]{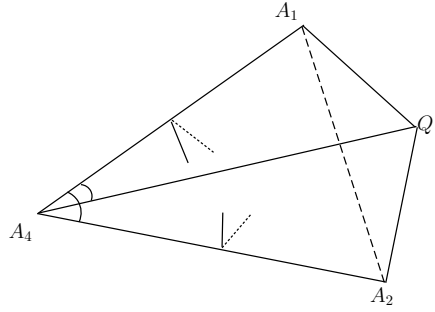}
 \caption{Estimate the angle $\angle QA_4A_1$\label{calri_3D_angle}} 
\end{figure}
By the conditions  $\angle A_2A_4A_1\leq \alpha_{\max}\leq \pi/2$, $Q\mbox{-}A_4A_2\mbox{-}A_1 \leq \gamma_{max}\leq \pi/2$, and the relation $Q\mbox{-}A_1A_4\mbox{-}A_2<\pi/2$ from (\ref{3Dangle_pro1}), it is easy to see that $\angle QA_4A_1\leq \pi/2$ (see Figure~\ref{calri_3D_angle} for clarity).  Analogously, we have $\angle QA_1A_4\leq \pi/2$ since $Q\mbox{-}A_1A_4\mbox{-}A_2<\pi/2$, $Q\mbox{-}A_1A_2\mbox{-}A_4\leq \gamma_{\max}\leq \pi/2$ and $\angle A_2A_1A_4 \leq \alpha_{\max}\leq \pi/2$.  
In view of  the proof of Lemma~\ref{lem_unique} in Appendix \ref{pro_lem_unique} for Case II, by the relations $\angle QA_1A_3\leq \pi/2$ and $\angle QA_4A_1\leq \pi/2$, we obtain the estimate (\ref{app1_013D}). We emphasize that the condition for the angle $\angle A_1QA_4$ is actually unnecessary when the points $D$ and $E$ are on the edges $\overline{QA_1}$ and $\overline{QA_4}$, respectively.

If the dihedral angle  $D_4\mbox{-}D_1D_2\mbox{-}A_4\in [\pi/2,\pi-A_3\mbox{-}A_1A_4\mbox{-}A_2)$, then the point $Q$ is on the ray $\overrightarrow{A_2G}$, and it holds $0\leq Q\mbox{-}A_1A_4\mbox{-}A_2<\pi/2-A_3\mbox{-}A_1A_4\mbox{-}A_2$, 
where the point $G$ is on the line $A_2A_3$  but not on the ray $\overrightarrow{A_2A_3}$ (see Figure~\ref{fig_3D2}). Obviously, we have $Q\mbox{-}A_1A_4\mbox{-}A_3<\pi/2$. Therefore, the proof of $\angle QA_4A_1\leq \pi/2$ and $\angle QA_1A_4\leq \pi/2$ is similar to that of the case $D_4\mbox{-}D_1D_2\mbox{-}A_4\in (0,\pi/2]$.  \qed 
\end{proof}

\subsection{Proof of Lemma~\ref{lem_jiest3D}}\label{pro_lem_jiest3D}
\begin{proof} $\!\!:$
Let $A_i$ be a vertice of the element $T$, and $\phi_{A_i}$ be the corresponding IFE basis function. Similar to the 2D cases, using (\ref{basis_3D})-(\ref{app1_013D}) we  obtain  
\begin{equation*}
|\phi_{A_i}|_{W_\infty^m(T^+\cup T^-)}\leq Ch^{-m},~ m=0,1. 
\end{equation*}
The function $\Psi(x)$ can be constructed explicitly as 
\begin{equation*}
\Psi=z-I_{h,T}^{\rm IFE}z, \quad\quad
z(x)=\left\{
\begin{aligned}
&1\quad &&\mbox{ in }  T^+,\\
&0&&\mbox{ in }  T^-.
\end{aligned} 
\right.
\end{equation*}
Then, we have
\begin{equation*}
\begin{aligned}
|\Psi|_{W^m_\infty(T^+\cup T^-)}&\leq |z|_{W^m_\infty(T^+\cup T^-)}+\|z\|_{L^\infty(T)}\sum_{i}|\phi_{A_i}|_{W^m_\infty(T^+\cup T^-)}\leq Ch^{-m},
\end{aligned}
\end{equation*}
which implies $|\Psi|^2_{H^m(T^+\cup T^-)}\leq Ch^{3-2m}$. The estimates for $\Upsilon$ and $\Theta_i$ can be obtained similarly by constructing these function as 
\begin{equation*}
\Upsilon=z-I_{h,T}^{\rm IFE}z,\quad\quad
z(x)=\left\{
\begin{aligned}
&\frac{1}{\beta^+}(x-x^*)\cdot\mathbf{n}_h\quad &&\mbox{ in }  T^+,\\
&0&&\mbox{ in }  T^-,
\end{aligned} 
\right.
\end{equation*}
and
\begin{equation*}
\Theta_{i}=z-I_{h,T}^{\rm IFE}z,\quad\quad
z(x)=\left\{
\begin{aligned}
&(x-x^*)\cdot\mathbf{t}_{i,h}\quad &&\mbox{ in }  T^+,\\
&0&&\mbox{ in }  T^-.
\end{aligned} 
\right.
\end{equation*} 
\ \qed 
\end{proof}

\subsection{Proof of Lemma~\ref{lem_tiaoyue} for the 3D cases}\label{pro_lem_tiaoyue}

\begin{proof} $\!\!:$
Since $I_h\phi$ is continuous across each face of the triangulation, it holds
\begin{equation}\label{pro_jum1}
\|[\phi]_e\|^2_{L^2(e)}=\|[\phi-I_h\phi]_e\|^2_{L^2(e)}\leq C\sum_{i=1,2}\left\|(\phi-I_h\phi)|_{T_i}\right\|^2_{L^2(e)}.
\end{equation}
It suffices to estimate the term on an element $T$ with  $e$ as its face. By (\ref{basis_3D})-(\ref{app1_013D}), we have
\begin{equation}\label{pro_jum2}
\left\|(\phi-I_{h}\phi)|_{T}\right\|^2_{L^2(e)}\leq Ch^2|e||\nabla I_{h,T}\phi\cdot{\mathbf{n}_h}|^2\leq Ch\|\nabla I_{h,T}\phi\cdot{\mathbf{n}_h}\|_{L^2(T)}^2.
\end{equation}
Using the identity (\ref{basis_3D}) we also have
\begin{equation*}
\nabla I_{h,T}\phi \cdot\mathbf{n}_h=\frac{\left(1+(\beta^-/\beta^+-1)\nabla I_{h,T}w\cdot\mathbf{n}_h\right)(\nabla \phi^\pm\cdot\mathbf{n}_h)}{1+(\beta^-/\beta^+-1)\nabla w^\pm\cdot\mathbf{n}_h}.
\end{equation*}
By the definition of $w$ in (\ref{def_w3D}) and the estimate (\ref{app1_013D}), we get
\begin{equation*}
|\nabla I_{h,T}\phi \cdot\mathbf{n}_h|\leq C|\nabla \phi^+\cdot\mathbf{n}_h| ~~\mbox{ and }~~|\nabla I_{h,T}\phi \cdot\mathbf{n}_h|\leq C|\nabla \phi^-\cdot\mathbf{n}_h|,
\end{equation*}
which leads to
\begin{equation}\label{pro_jum3}
\begin{aligned}
\|\nabla I_{h,T}\phi\cdot{\mathbf{n}_h}\|_{L^2(T)}^2&=|\nabla I_{h,T}\phi\cdot{\mathbf{n}_h}|^2|T^+|+|\nabla I_{h,T}\phi\cdot{\mathbf{n}_h}|^2||T^-|\\
&\leq C|\nabla \phi^+\cdot\mathbf{n}_h|^2|T^+|+C|\nabla \phi^-\cdot\mathbf{n}_h||T^-|\\
&\leq C\|\nabla \phi\|^2_{L^2(T)}.
\end{aligned}
\end{equation}
The desired result (\ref{tiaoyue}) now follows from (\ref{pro_jum1})-(\ref{pro_jum3}). \qed 
\end{proof}

\subsection{Proof of Lemma~\ref{lem_mish3D}}\label{pro_lem_mish3D}

\begin{proof} $\!\!:$
Note that $T^\triangle=(T_h^+\backslash T^+)\cup(T_h^-\backslash T^-)$ is the mis-matched region on $T$. 
For any $\phi\in S_h(T)$, it follows from (\ref{basis_3D})-(\ref{def_w3D}) that for $m=0,1$,
\begin{equation*}
\begin{aligned}
|\phi^+-\phi^-|_{W^m_\infty(T^\triangle)}&=\left|\frac{(\beta^-/\beta^+-1)\nabla I_{h,T}\phi\cdot{\mathbf{n}_h}}{1+(\beta^-/\beta^+-1)\nabla I_{h,T}w\cdot\mathbf{n}_h}\right||d_{\Gamma_{h,T}^{ext}}|_{W^m_\infty(T^\triangle)}\leq Ch^{2-2m}|\nabla I_{h,T} \phi\cdot\mathbf{n}_h|,
\end{aligned}
\end{equation*}
where in the last inequality we have utilized (\ref{app1_013D}) and  the first inequality in (\ref{rela_3d_nnh2}). The first inequality in  (\ref{rela_3d_nnh2}) also implies $|T^\triangle|/|T|\leq Ch$. By the definition of $\widehat{I_{h}^{\rm IFE}}$ in (\ref{def_hatIFE}) and the inequality  (\ref{pro_jum3}) we have
\begin{equation}\label{dis_1}
\begin{aligned}
|\widehat{I_{h}^{\rm IFE}}\phi-\phi|^2_{H^m(T)}&=|\phi^+-\phi^-|^2_{H^m(T^\triangle)}\leq Ch^{4-4m}|\nabla I_{h,T} \phi\cdot\mathbf{n}_h|^2|T|(|T^\triangle|/|T|)\\
&\leq Ch^{5-4m}\|\nabla I_{h,T} \phi\cdot\mathbf{n}_h\|^2_{L^2(T)}\leq Ch^{5-4m}\|\nabla\phi\|^2_{L^2(T)},
\end{aligned}
\end{equation}
and
\begin{equation}\label{dis_2}
\begin{aligned}
|\widehat{I_{h}^{\rm IFE}}\phi-\phi|^2_{L^2(e)}&=|\phi^+-\phi^-|^2_{L^2(e)}\leq Ch^{4}|\nabla I_{h,T} \phi\cdot\mathbf{n}_h|^2|T|(|e|/|T|)\\
&\leq Ch^{3}\|\nabla I_{h,T} \phi\cdot\mathbf{n}_h\|^2_{L^2(T)}\leq Ch^{3}\|\nabla\phi\|^2_{L^2(T)}.
\end{aligned}
\end{equation}
Choosing $\phi=I_{h}^{\rm IFE}v$, we get
\begin{equation*}
\widehat{I_{h}^{\rm IFE}}\phi-\phi=\widehat{I_{h}^{\rm IFE}}(I_{h}^{\rm IFE}v)-I_{h}^{\rm IFE}v=\widehat{I_{h}^{\rm IFE}}v-I_{h}^{\rm IFE}v,
\end{equation*}
which together with (\ref{dis_1}) and (\ref{dis_2}) yields the desired results (\ref{dis}).
\end{proof}

\subsection{Proof of Lemma~\ref{lem_consis_3D}}\label{pro_consis_3D}
\begin{proof} $\!\!:$  By the Cauchy-Schwarz inequality we have
\begin{equation}
\left|\int_{\Gamma}\beta^- \nabla u^-\cdot\mathbf{n} [v_h]_\Gamma ds\right|^2\leq C\|\nabla u^-\cdot\mathbf{n}\|^2_{L^2(\Gamma)}\sum_{T\in\mathcal{T}_h^\Gamma}\|[v_h]_{\Gamma}\|^2_{L^2(\Gamma\cap T)}.
\end{equation}
For any $\phi\in S_h(T)$, it follows from (\ref{basis_3D})-(\ref{def_w3D}) that
\begin{equation*}
\begin{aligned}
\|[\phi]_{\Gamma\cap T}\|_{L^\infty(\Gamma\cap T)}&=\left|\frac{(\beta^-/\beta^+-1)\nabla I_{h,T}\phi\cdot{\mathbf{n}_h}}{1+(\beta^-/\beta^+-1)\nabla I_{h,T}w\cdot\mathbf{n}_h}\right|\|d_{\Gamma_{h,T}^{ext}}\|_{L^\infty(\Gamma\cap T)}\leq Ch^2|\nabla I_{h,T} \phi\cdot\mathbf{n}_h|,
\end{aligned}
\end{equation*}
where in the last inequality we have used (\ref{app1_013D}) and  the first inequality in  (\ref{rela_3d_nnh2}). Using the fact $|\Gamma\cap T|\leq Ch^2$ which can be obtain by applying the interface trace inequality  (see Lemma 3.2  in \cite{2016High}) to a constant function, we further have
\begin{equation*}
\|[\phi]_{\Gamma\cap T}\|^2_{L^2(\Gamma\cap T)}\leq Ch^6|\nabla I_{h,T} \phi\cdot\mathbf{n}_h|^2\leq Ch^{3}\|\nabla I_{h,T} \phi\cdot\mathbf{n}_h\|^2_{L^2(T)}\leq Ch^{3}\| \phi\cdot\mathbf{n}_h\|^2_{L^2(T)},
\end{equation*}
where we have used (\ref{pro_jum3}) in the last inequality. The lemma follows from the above inequalities and the global trace inequality on $\Omega^-$.
\end{proof}

 \end{appendices}

\end{document}